\documentclass[hidelinks,onefignum,onetabnum]{siamart251216}

\usepackage{color}
\usepackage{booktabs,graphicx}
\usepackage{subcaption}
\usepackage{amssymb}
\usepackage{tikz}
\usepackage{multirow}
\usepackage{siunitx}

\newsiamthm{assumption}{Assumption}
\newcommand{\jump}[1]{[\![ #1 ]\!]}
\newcommand{\avg}[1]{\{ #1 \}}

\usepackage{lipsum}
\usepackage{amsfonts}
\usepackage{graphicx}
\usepackage{epstopdf}
\usepackage{algorithmic}
\ifpdf
  \DeclareGraphicsExtensions{.eps,.pdf,.png,.jpg}
\else
  \DeclareGraphicsExtensions{.eps}
\fi

\newsiamremark{remark}{Remark}
\newsiamremark{hypothesis}{Hypothesis}
\crefname{hypothesis}{Hypothesis}{Hypotheses}
\newsiamthm{claim}{Claim}
\newsiamremark{fact}{Fact}
\crefname{fact}{Fact}{Facts}

\headers{Trace inequality \& Kernel-based Meshfree Galerkin}{L. Ling, S. Ma, W. Qiu, and Y. Sun}

\title{Trace Inverse Estimates and Kernel-based Meshfree Galerkin Methods for Elliptic Interface Problems\thanks{Submitted to .
\funding{This work was supported by the General Research Fund (GRF No. 12301824, 12301526) of the Hong Kong Research Grant Council, the Guangdong and Hong Kong Universities “1+1+1” Joint Research Collaboration Scheme (project no. 2025A0505000014), the National Natural Science Foundation of China (Young Scientists Fund, Category C, Grant No. 12601787) and the Guangdong Basic and Applied Basic Research Foundation (Grant No. 2026A1515012472).}}}

\author{Leevan Ling\thanks{Department of Mathematics, Hong Kong Baptist University, Hong Kong
  (\email{lling@hkbu.edu.hk}, \email{shu\_ma@hkbu.edu.hk}, \email{yzsun95@hkbu.edu.hk}).}
\and Shu Ma\footnotemark[2]
\and Weifeng Qiu\thanks{Department of Mathematics, City University of Hong Kong, Hong Kong
(\email{weifeqiu@cityu.edu.hk}).}
\and Yizhong Sun\footnotemark[2]}

\usepackage{amsopn}

\ifpdf
\hypersetup{
  pdftitle={Trace inequality \& Kernel-based Meshfree Galerkin},
  pdfauthor={L. Ling, S. Ma, WF. Qiu, and YZ. Sun}
}
\fi

\begin{document}

\maketitle
\raggedbottom

\begin{abstract}
Kernel Galerkin methods allow meshfree trial spaces to be
generated independently in each material subdomain and
refined through their center sets, without generating an approximation mesh.
We establish trace inverse estimates for products of
Sobolev-kernel spaces on Lipschitz subdomains and use them
to analyze symmetric Nitsche discretizations of linear elliptic interface problems.
The estimates control gradient traces on subdomain boundaries
by subdomain gradients, the interface jump, and an exterior
boundary trace.
Their conormal form supplies the flux control required for
stability directly from the kernel and center assumptions,
without a separate discrete flux hypothesis.
For quasi-uniform center families with comparable fill distances
and a fixed kernel scale, we obtain well-posedness for sufficiently
large penalty parameters independent of refinement.
We prove energy and $L^2$ error bounds of orders $h^{s-1}$
and $h^s$ for piecewise $H^s$ solutions, $2\le s\le m$,
where $h$ is the maximum subdomain fill distance and $m$
is the kernel Sobolev order.
The $L^2$ bound additionally requires piecewise $H^2$ regularity
of the homogeneous adjoint interface problem.
For kernels $\Phi_m(\varepsilon\,\cdot)$ with $0<\varepsilon\le1$,
we derive a scale-explicit condition-number bound in the
kernel-translate basis and an $O(h^{-2})$ bound in an
$L^2$-orthonormal basis.
The analysis assumes exact geometry and integration.
Planar experiments examine convergence and kernel scaling,
while a three-dimensional heat-conduction problem in a curved
composite illustrates the method.
\end{abstract}

\begin{keywords}
kernel-based meshless, Nitsche method,
Bernstein inverse inequality, scattered data approximation, exact geometric integration, matrix conditioning
\end{keywords}

\begin{MSCcodes}
65N30, 65N12, 65N15
\end{MSCcodes}

\section{Introduction}
\label{sec:introduction}
Second-order linear elliptic interface problems arise in heterogeneous media models such as electrochemical batteries, oxygen transport, heat conduction, and electrostatics with discontinuous dielectric properties \cite{TaleeiDehghan2014, MartinFornberg2017,  Pastor-Alonso+BergETAL-Modeoxygtranbrai:24, Sitaraman+AllenETAL-immeintemethmicr:26}. We consider scalar problems in which prescribed jumps of the solution and conormal flux couple the equations in adjacent materials across possibly curved interfaces and
exterior boundaries. The model problem is specified in Section~\ref{subsec:interface-setting}.

Finite element methods provide fitted and unfitted treatments
of interface problems with established convergence theory
\cite{HansboHansbo2002,LehrenfeldReusken2018}.
Fitted discretizations align element boundaries with an interface
approximation, as shown in Fig.~\ref{fig:intro-discretizations}(a).
Their generation and refinement must maintain geometric accuracy
and mesh quality, particularly for curved high-order elements
\cite{LuShephardTendulkarBeall2014}.
Unfitted methods relax this alignment requirement by allowing
the interface to cut a background mesh, as illustrated in
Fig.~\ref{fig:intro-discretizations}(b).
High-order accuracy then requires suitable geometric approximation
and integration over cut cells \cite{LehrenfeldReusken2018}.
Stabilization techniques, including ghost penalties, control
the effects of small cut portions
\cite{BurmanClausHansboLarsonMassing2015}.

The geometric and element-quality requirements of mesh
construction motivate a trial space whose resolution can
be prescribed directly through points in each material,
without generating or modifying a fitted approximation mesh to define
and refine the trial space.
Radial basis function (RBF) approximation provides such a
meshfree construction through linear combinations of functions
centered at scattered points, each depending only on the
distance from its center
\cite{Wendland1999,SchabackWendland2006}.
Fig.~\ref{fig:intro-discretizations}(c) illustrates centers
within the physical subdomains, with none required on the
interface or exterior boundary.
We use Sobolev kernels, including Mat\'ern and Wendland RBFs,
whose Fourier decay identifies their native spaces with
$H^m$ up to equivalent norms
in Section~\ref{subsec:kernel-setting}.
For a single-domain Dirichlet problem, Chernih and Le Gia
\cite{ChernihLeGia2014} analyze a Nitsche kernel Galerkin method under
an additional hypothesis controlling the boundary normal derivative by
the volume gradient.  Their analysis therefore does not derive this
discrete flux control from the kernel and center assumptions themselves.
\begin{figure}[tbp]
\centering
\begin{subfigure}[t]{0.325\textwidth}
    \centering
    \includegraphics[width=\textwidth]{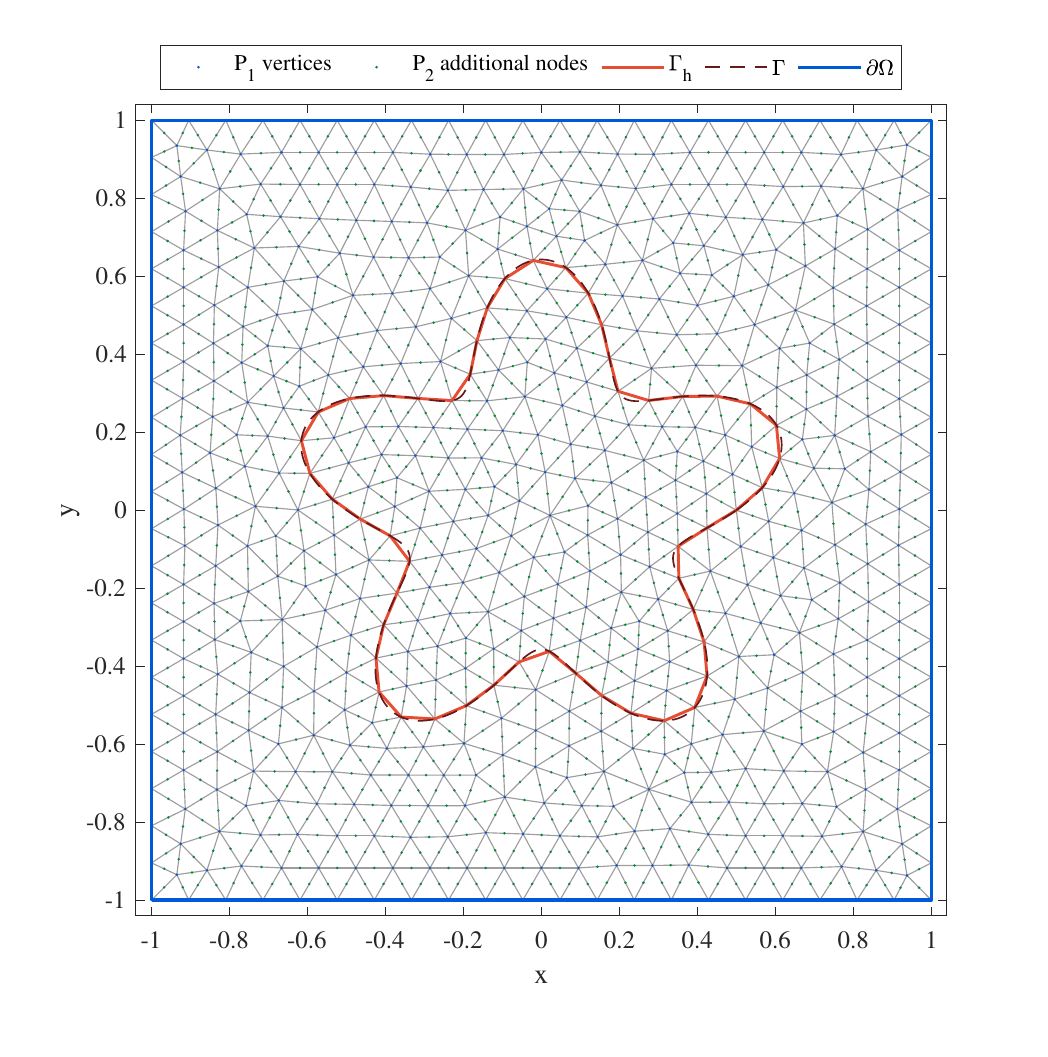}
    \caption{Fitted finite elements.}
\end{subfigure}
\hfill
\begin{subfigure}[t]{0.325\textwidth}
    \centering
    \includegraphics[width=\textwidth]{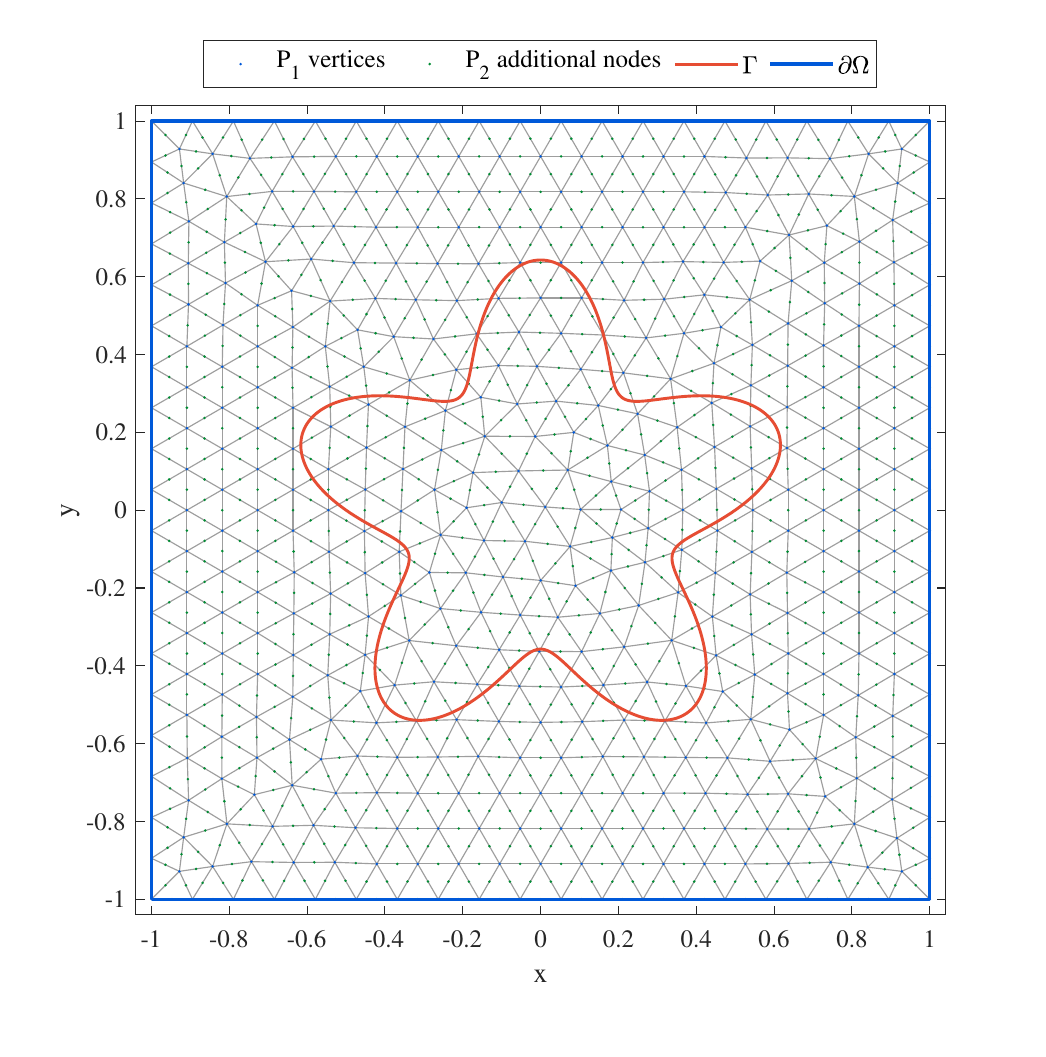}
    \caption{Unfitted finite elements.}
\end{subfigure}
\hfill
\begin{subfigure}[t]{0.325\textwidth}
    \centering
    \includegraphics[width=\textwidth]{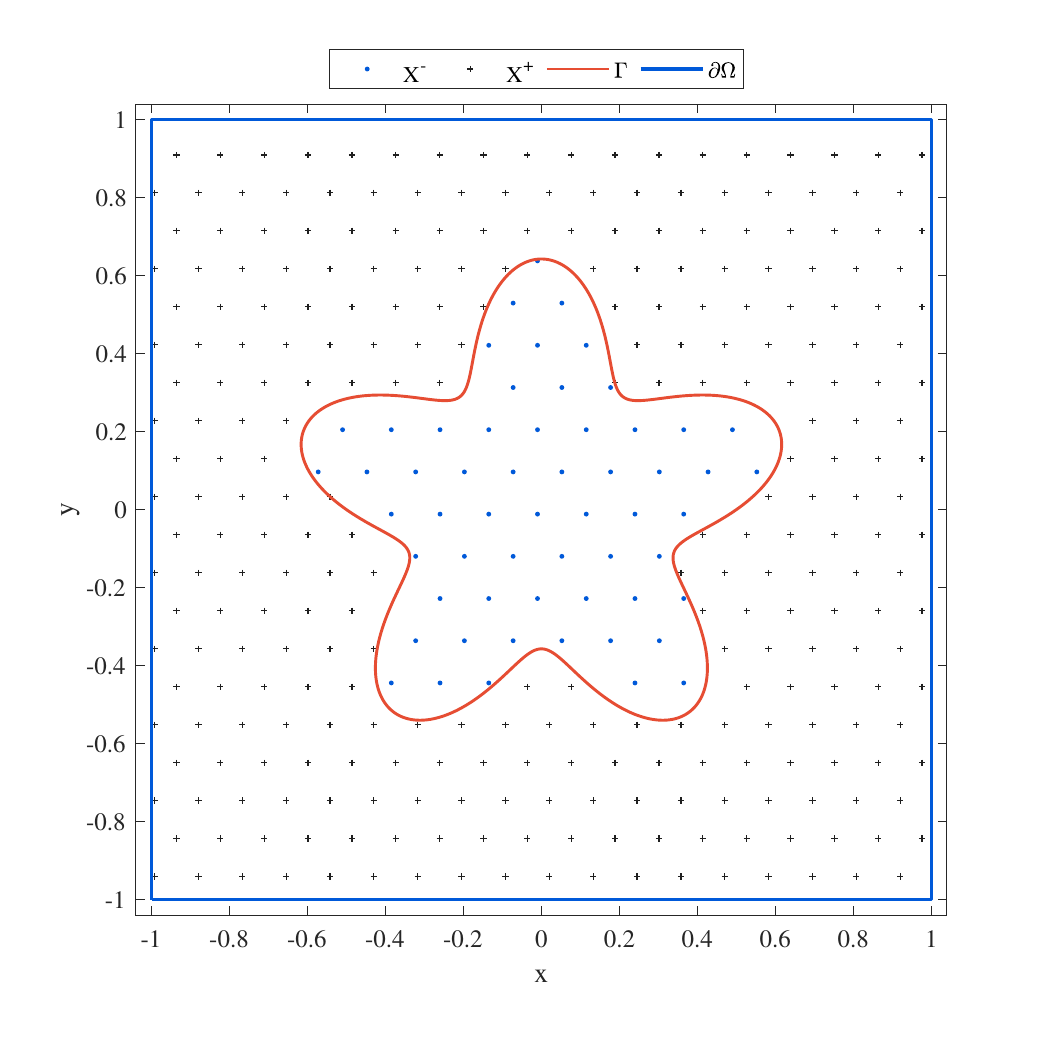}
    \caption{Kernel trial centers.}
\end{subfigure}
\vspace{-4mm}
\caption{
Trial-space constructions for a curved interface.
The fitted mesh follows the interface, the unfitted background mesh
is cut by it, and the kernel centers are generated independently
inside the two physical subdomains.
}
\label{fig:intro-discretizations}
\vspace{-4mm}
\end{figure}

For the interface problem, we use two independently generated kernel
spaces, one on each physical subdomain, and couple them through a
symmetric Nitsche formulation~\eqref{eq:discrete-nitsche}.  Neither center set is constrained
to the interface or exterior boundary, while the exterior Dirichlet
condition and the prescribed solution jump are imposed weakly.
The resulting method differs from the single-domain analysis of
Chernih and Le Gia \cite{ChernihLeGia2014} in two essential respects:
it couples independent kernel spaces across an interface, and the
boundary/interface flux control required for Nitsche stability is
derived from the Sobolev-kernel and center assumptions rather than
postulated as an additional discrete hypothesis.
Galerkin testing then gives the kernel coefficient system
\eqref{eq:kernel-galerkin-system}, while the volume and surface
quadrature can be constructed independently of the trial centers.

Deriving this flux control is the main analytical difficulty.
In finite element analysis, suitable discrete trace inequalities are a
standard mechanism for controlling Nitsche flux terms.  For example,
Guo and Lin \cite{GuoLin2019Cauchy} establish trace inequalities for
high-order immersed finite element spaces constructed through local
Cauchy extensions.
The kernel spaces considered here have a different structure.
Kernel translates are not elementwise polynomials, and their derivatives
need not belong to the original trial space.  Applying the continuous
trace theorem to a gradient therefore introduces an $H^2$ norm of the
trial function.  We control this higher-order norm  the full-range Bernstein inequality 
\cite{SunLing2025}, and combine with the
interface Poincar\'e inequality of Lemma~\ref{lem:interface-poincare}.
This yields the product-space trace inverse estimate of
Theorem~\ref{thm:trace-inverse}, which bounds the gradient traces on the
subdomain boundaries by the volume gradients, the interface jump, and
one exterior boundary trace, with a constant independent of refinement.

With this discrete flux control in hand, the remaining variational
analysis follows without an additional kernel-specific stability
hypothesis.  Under exact geometry and integration,
Theorem~\ref{thm:well-posedness} proves well-posedness for sufficiently
large penalty parameters chosen independently of refinement.
If the exact solution has piecewise regularity
$u^\eta\in H^s(\Omega^\eta)$ with  $\eta=\pm$ and $2\le s\le m$,
then the energy and $L^2$ estimates of Theorems~\ref{thm:energy-error} and~\ref{thm:L2-error} reduce, in particular, to
\begin{equation*}
\|\nabla(\mathbf u-\mathbf u_h)\|_{\mathbf L^2(\boldsymbol\Omega)}
\le
C h^{s-1}\|\mathbf u\|_{\mathbf H^s(\boldsymbol\Omega)},
\qquad
\|\mathbf u-\mathbf u_h\|_{\mathbf L^2(\boldsymbol\Omega)}
\le
C h^s\|\mathbf u\|_{\mathbf H^s(\boldsymbol\Omega)},
\end{equation*}
where the second estimate additionally assumes piecewise $H^2$
regularity of the homogeneous adjoint interface problem.
Section~\ref{subsec:conditioning} further quantifies the dependence
of coefficient conditioning on the kernel scale and shows the
$O(h^{-2})$ operator-conditioning behavior after $L^2$ mass scaling.
The numerical experiments in Section~\ref{sec:numerics} separately
examine convergence, kernel-scale effects on accuracy and conditioning,
and a three-dimensional heat-conduction problem on a curved composite.

\section{Interface problem and symmetric Nitsche formulation}
\label{sec:setting}

We specify the interface problem and its regularity assumptions, then
introduce a symmetric Nitsche formulation on general product trial
spaces. The stability analysis leads to a discrete trace estimate for
the trial spaces. Throughout this section, $C>0$ denotes a
generic constant independent of the discretization scale.

\subsection{Model problem and regularity assumptions}
\label{subsec:interface-setting}

Let $\Omega\subset\mathbb R^d$, $d\ge2$, be a bounded connected
Lipschitz domain partitioned into connected Lipschitz subdomains
$\Omega^\pm$ with disjoint interiors and
$\Omega=\operatorname{int}
(\overline{\Omega^-}\cup\overline{\Omega^+})$.
Define the interface and exterior boundary portions by
$\Gamma:=(\partial\Omega^-\cap\partial\Omega^+)\cap\Omega$
and $\Gamma_{\Omega^\eta}:=
\partial\Omega^\eta\cap\partial\Omega$ for $\eta=\pm$.
We assume $|\Gamma|_{d-1}>0$ and label the subdomains so that
$|\Gamma_{\Omega^+}|_{d-1}>0$.
Terms over an empty exterior boundary portion are omitted.
Both inclusion and separating geometries are allowed, as illustrated
in Fig.~\ref{fig:interface-geometries}.
The unit normal $\vec n_\Gamma$ points from $\Omega^-$ to $\Omega^+$.
The unit normal $\vec n_{\partial\Omega}$ points outward from $\Omega$.

For scalar-, vector-, or matrix-valued subdomain quantities $q^\pm$,
we use the product notation $\mathbf q:=(q^-,q^+)$.
Products and differential operators act componentwise.
The corresponding ordered pairs of domains and exterior boundary
portions are $\boldsymbol\Omega:=(\Omega^-,\Omega^+)$ and
$\Gamma_{\boldsymbol\Omega}:=
(\Gamma_{\Omega^-},\Gamma_{\Omega^+})$.
For product functions with traces on $\Gamma$, define the jump
$\jump{\mathbf v}:=v^--v^+$ and arithmetic average
$\avg{\mathbf v}:=\tfrac12(v^-+v^+)$.
The interface problem can then be written in the equivalent forms
\begin{equation}
\left\{
\begin{alignedat}{2}
-\nabla\!\cdot(A^\pm\nabla u^\pm)&=f^\pm
&\quad&\text{in }\Omega^\pm,\\
u^\pm&=g_D^\pm&&\text{on }\Gamma_{\Omega^\pm},\\
u^--u^+&=g_\Gamma&&\text{on }\Gamma,\\
(A^-\nabla u^--A^+\nabla u^+)\cdot\vec n_\Gamma
&=\psi_\Gamma&&\text{on }\Gamma,
\end{alignedat}
\right.
\Leftrightarrow
\left\{
\begin{alignedat}{2}
-\nabla\!\cdot(\mathbf A\nabla\mathbf u)&=\mathbf f
&\quad&\text{in }\boldsymbol\Omega,\\
\mathbf u&=\mathbf g_D&&\text{on }\Gamma_{\boldsymbol\Omega},\\
\jump{\mathbf u}&=g_\Gamma&&\text{on }\Gamma,\\
\jump{\mathbf A\nabla\mathbf u}\cdot\vec n_\Gamma
&=\psi_\Gamma&&\text{on }\Gamma.
\end{alignedat}
\right.
\label{eq:intro-interface-problem}
\end{equation}
Here $f^\eta$ is the volume source, and $g_\Gamma$ and
$\psi_\Gamma$ prescribe the jumps of the solution and conormal flux.
The product boundary condition is understood componentwise.

For $\eta=\pm$, assume that the diffusion tensor
$A^\eta\in[W^{1,\infty}(\Omega^\eta)]^{d\times d}$
is symmetric and satisfies the uniform ellipticity bounds with constants $0<\alpha_0\le\alpha_1<\infty$, 
\begin{equation}
\alpha_0|\vec\xi|^2
\le
\vec \xi^\top A^\eta(\vec x)\vec\xi
\le
\alpha_1|\vec \xi|^2,
\qquad
\vec\xi\in\mathbb R^d,
\quad\text{a.e. }\vec x\in\Omega^\eta.
\label{eq:uniform-ellipticity}
\end{equation}

\begin{figure}[tbp]
\centering
\begin{tikzpicture}[
    scale=0.62,
    outerboundary/.style={black, line width=1pt},
    interface/.style={red!75!black, line width=1pt},
    normalarrow/.style={->,>=stealth,line width=0.9pt},
    domainlabel/.style={font=\normalsize},
    textlabel/.style={font=\small}
]

\begin{scope}[xshift=0cm]

\fill[orange!18] (0,0) ellipse (3.8 and 2.35);
\fill[blue!18]   (-0.10,-0.05) ellipse (1.45 and 0.92);

\draw[outerboundary] (0,0) ellipse (3.8 and 2.35);
\draw[interface]     (-0.10,-0.05) ellipse (1.45 and 0.92);

\node[domainlabel] at (-0.10,-0.05) {$\Omega^{-}$};
\node[domainlabel] at (-2.35,1.05) {$\Omega^{+}$};

\node[textlabel, red!75!black] at (-1.05,-0.98) {$\Gamma$};
\node[textlabel] at (4.15,0.55) {$\Gamma_{\Omega^+}$};

\draw[normalarrow] (1.08,0.43) -- (1.55,0.85);
\node[textlabel] at (1.90,0.95) {$\vec n_{\Gamma}$};

\draw[normalarrow] (2.98,1.43) -- (3.35,1.88);
\node[textlabel] at (3.85,2.00) {$\vec n_{\partial\Omega}$};

\node[textlabel, align=left] at (2.05,-1.30)
{
$\jump{\mathbf u}=g_{\Gamma}$\\[0.5mm]
$\jump{\mathbf A\nabla\mathbf u}\cdot\vec n_{\Gamma}
=\psi_{\Gamma}$
};

\node[textlabel] at (2.05,2.35) {$ u^+=g_D^+$};

\node[domainlabel] at (0,-3.02) {(a) Inclusion interface};

\end{scope}

\hspace{3mm}

\begin{scope}[xshift=8.8cm]

\fill[orange!18] (0,0) ellipse (3.8 and 2.35);

\begin{scope}
\clip (0,0) ellipse (3.8 and 2.35);
\fill[blue!18]
(-3.8,-2.6) --
(-3.8,0.0)
.. controls (-2.5,0.35) and (-1.2,-0.35) .. (0.0,0.02)
.. controls (1.2,0.35) and (2.4,-0.18) .. (3.8,0.0)
-- (3.8,-2.6) -- cycle;
\end{scope}

\draw[outerboundary] (0,0) ellipse (3.8 and 2.35);

\draw[interface]
(-3.8,0.0)
.. controls (-2.5,0.35) and (-1.2,-0.35) .. (0.0,0.02)
.. controls (1.2,0.35) and (2.4,-0.18) .. (3.8,0.0);

\node[domainlabel] at (-1.75,1.05) {$\Omega^{+}$};
\node[domainlabel] at (2.05,-1.05) {$\Omega^{-}$};

\node[textlabel, red!75!black] at (-0.25,0.22) {$\Gamma$};
\node[textlabel] at (3.90,0.98) {$\Gamma_{\Omega^+}$};
\node[textlabel] at (3.90,-0.98) {$\Gamma_{\Omega^-}$};

\draw[normalarrow] (0.15,0.02) -- (0.15,0.82);
\node[textlabel] at (0.60,0.95) {$\vec n_{\Gamma}$};

\draw[normalarrow] (2.94,1.43) -- (3.35,1.88);
\node[textlabel] at (3.90,2.00) {$\vec n_{\partial\Omega}$};

\node[textlabel, align=left] at (-1.05,-0.65)
{
$\jump{\mathbf u}=g_{\Gamma}$\\[0.5mm]
$\jump{\mathbf A\nabla\mathbf u}\cdot\vec n_{\Gamma}
=\psi_{\Gamma}$
};

\node[textlabel] at (2.05,2.30) {$\mathbf u=\mathbf g_D$};

\node[domainlabel] at (0,-3.02) {(b) Separating interface};

\end{scope}

\end{tikzpicture}
\vspace{-2mm}
\caption{Two representative interface geometries.
In the inclusion geometry, $\Gamma_{\Omega^-}=\varnothing$, whereas in the
separating geometry both $\Gamma_{\Omega^-}$ and $\Gamma_{\Omega^+}$ may be nonempty.}
\label{fig:interface-geometries}
\vspace{-4mm}
\end{figure}

For $t\ge0$, define the product Sobolev spaces
\begin{equation}
\mathbf H^t(\boldsymbol\Omega)
:=H^t(\Omega^-)\times H^t(\Omega^+),
\qquad
\mathbf L^2(\boldsymbol\Omega):=\mathbf H^0(\boldsymbol\Omega),
\label{eq:broken-space}
\end{equation}
with norms and seminorms
\begin{equation}
\|\mathbf v\|_{\mathbf H^t(\boldsymbol\Omega)}^2
:=\sum_{\eta=\pm}\|v^\eta\|_{H^t(\Omega^\eta)}^2,
\qquad
|\mathbf v|_{\mathbf H^t(\boldsymbol\Omega)}^2
:=\sum_{\eta=\pm}|v^\eta|_{H^t(\Omega^\eta)}^2.
\label{eq:broken-norm}
\end{equation}
Volume and surface $L^2$ inner products are denoted by
$(\cdot,\cdot)_{L^2(D)}$ and $\langle\cdot,\cdot\rangle_{L^2(\Sigma)}$.
For product functions, set
$(\mathbf p,\mathbf q)_{\mathbf L^2(\boldsymbol\Omega)}
:=\sum_{\eta=\pm}(p^\eta,q^\eta)_{L^2(\Omega^\eta)}$ with corresponding norm
$\|\mathbf v\|_{\mathbf L^2(\boldsymbol\Omega)}^2
:=(\mathbf v,\mathbf v)_{\mathbf L^2(\boldsymbol\Omega)}$. On the exterior boundary portions, define
$\langle\mathbf p,\mathbf q\rangle_{\mathbf L^2(\Gamma_{\boldsymbol\Omega})}
:=
\sum_{\eta=\pm}
\langle p^\eta,q^\eta\rangle_{L^2(\Gamma_{\Omega^\eta})}$ with
$\|\mathbf v\|_{\mathbf L^2(\Gamma_{\boldsymbol\Omega})}^2
:=
\langle\mathbf v,\mathbf v\rangle_{\mathbf L^2(\Gamma_{\boldsymbol\Omega})}$.
For the Dirichlet data, set
$\mathbf H^{1/2}(\Gamma_{\boldsymbol\Omega})
:=
H^{1/2}(\Gamma_{\Omega^-})
\times H^{1/2}(\Gamma_{\Omega^+})$.
Sobolev spaces on boundary portions are understood as restriction
spaces.
The data satisfy
\[
\mathbf f\in\mathbf L^2(\boldsymbol\Omega),\qquad
\mathbf g_D\in\mathbf H^{1/2}(\Gamma_{\boldsymbol\Omega}),\qquad
g_\Gamma\in H^{1/2}(\Gamma),\qquad
\psi_\Gamma\in L^2(\Gamma).
\]

We assume that the prescribed value data admit a lifting
$\mathbf g\in\mathbf H^1(\boldsymbol\Omega)$ satisfying
$\mathbf g=\mathbf g_D$ on $\Gamma_{\boldsymbol\Omega}$ and
$\jump{\mathbf g}=g_\Gamma$ on $\Gamma$.
Strong imposition of these conditions requires the correction
$\mathbf u-\mathbf g$ to have zero exterior boundary trace and
matching interface traces.
The Sobolev space of homogeneous corrections is
\[\mathbf H_0^1(\boldsymbol\Omega)
:=
~\left\{
\mathbf v\in\mathbf H^1(\boldsymbol\Omega):
\mathbf v=\mathbf0~\text{ on }~\Gamma_{\boldsymbol\Omega}, \quad
\jump{\mathbf v}=0~\text{ on }~\Gamma
\right\}.\]
Restriction to the subdomains identifies $H_0^1(\Omega)$ with
$\mathbf H_0^1(\boldsymbol\Omega)$.
The continuous weak problem is to find
$\mathbf u\in\mathbf g+\mathbf H_0^1(\boldsymbol\Omega)$ such that
\begin{equation}
(\mathbf A\nabla\mathbf u,\nabla\mathbf v)_{\mathbf L^2(\boldsymbol\Omega)}
=
(\mathbf f,\mathbf v)_{\mathbf L^2(\boldsymbol\Omega)}
+\langle\psi_\Gamma,\avg{\mathbf v}\rangle_{L^2(\Gamma)}
\qquad
\forall\,\mathbf v\in\mathbf H_0^1(\boldsymbol\Omega).
\label{eq:continuous-weak}
\end{equation}
Uniform ellipticity and the Poincar\'e inequality give a unique
weak solution.

For the error analysis, we assume
$\mathbf u\in\mathbf H^s(\boldsymbol\Omega)$ with $s\ge2$,
where $s$ denotes the available piecewise solution regularity.
The $L^2$ error estimate in Section~\ref{sec:analysis} requires
an additional regularity assumption.
For $\mathbf r\in\mathbf L^2(\boldsymbol\Omega)$, let
$\mathbf z\in\mathbf H_0^1(\boldsymbol\Omega)$ be the weak
solution of the homogeneous adjoint interface problem
\begin{equation}
\left\{
\begin{alignedat}{2}
-\nabla\!\cdot(\mathbf A\nabla\mathbf z)
&=\mathbf r
\quad\text{in }\boldsymbol\Omega,
&\qquad \mathbf z
&=\mathbf0
\quad\text{on }\Gamma_{\boldsymbol\Omega},\\
\jump{\mathbf z}
&=0
\quad\text{on }\Gamma,
&\qquad
\jump{\mathbf A\nabla\mathbf z}\cdot\vec n_\Gamma
&=0
\quad\text{on }\Gamma.
\end{alignedat}
\right.
\label{eq:adjoint-interface-problem}
\end{equation}
The adjoint has the same differential operator because $A^\eta$
is symmetric.
We assume
\begin{equation}
\mathbf z\in\mathbf H^2(\boldsymbol\Omega),
\qquad
\|\mathbf z\|_{\mathbf H^2(\boldsymbol\Omega)}
\le C_{\rm reg}\|\mathbf r\|_{\mathbf L^2(\boldsymbol\Omega)}
\qquad
\forall\,\mathbf r\in\mathbf L^2(\boldsymbol\Omega),
\label{eq:adjoint-regularity}
\end{equation}
with constant $C_{\rm reg}$ independent of $\mathbf r$.
For example, this estimate holds for positive piecewise constant
scalar diffusion on a convex polygon in $\mathbb R^2$ with a closed
$C^2$ interface contained strictly inside the domain
\cite[Theorem~2.1]{ChenZou1998}.
It remains an additional assumption for the Lipschitz geometries
considered here and is used only in the $L^2$ error analysis.

\subsection{Symmetric Nitsche formulation on product trial spaces}
\label{subsec:nitsche-formulation}
Let
$\mathbf V_h:=V_h^-\times V_h^+
\subset\mathbf H^1(\boldsymbol\Omega)$
be a finite-dimensional product space with well-defined one-sided
conormal traces in $L^2$ on the interface and exterior boundary. No boundary or interface value constraints are imposed on $\mathbf V_h$.
Examples include conforming finite element spaces on separate
subdomain meshes \cite{BeckerHansboStenberg2003} and the kernel
trial spaces constructed in Section~\ref{sec:trace-inverse}.
For finite element functions, the conormal traces are taken
elementwise.
No boundary or interface value constraints are imposed on
$\mathbf V_h$.

Let $0<h\le1$ denote the discretization scale, and let
$\gamma_\Gamma>0$ and $\gamma_{\partial,\eta}>0$, $\eta=\pm$,
be penalty parameters independent of $h$. Write
$\boldsymbol\gamma_\partial:=
(\gamma_{\partial,-},\gamma_{\partial,+})$.
Following the symmetric Nitsche method
\cite{Nitsche1971,BeckerHansboStenberg2003}, we seek
$\mathbf u_h\in\mathbf V_h$ such that
\begin{equation}
a_h(\mathbf u_h,\mathbf v_h)=\ell_h(\mathbf v_h)
\qquad
\forall\,\mathbf v_h\in\mathbf V_h,
\label{eq:discrete-nitsche}
\end{equation}
where the Nitsche bilinear form is
\begin{align}
a_h(\mathbf w,\mathbf v)
:={}&
(\mathbf A\nabla\mathbf w,\nabla\mathbf v)_{\mathbf L^2(\boldsymbol\Omega)} \nonumber
\\[2pt]
&\underbrace{
-\langle
\avg{\mathbf A\nabla\mathbf w}\cdot\vec n_\Gamma,
\jump{\mathbf v}
\rangle_{L^2(\Gamma)}
-\langle
\jump{\mathbf w},
\avg{\mathbf A\nabla\mathbf v}\cdot\vec n_\Gamma
\rangle_{L^2(\Gamma)}
}_{\text{interface flux terms}}
+\underbrace{
\frac{\gamma_\Gamma}{h}
\langle\jump{\mathbf w},\jump{\mathbf v}\rangle_{L^2(\Gamma)}
}_{\text{interface penalty term}} \label{eq:nitsche-bilinear}
\\[2pt]
&\underbrace{
-\langle
\mathbf A\nabla\mathbf w\cdot\vec n_{\partial\Omega},
\mathbf v
\rangle_{\mathbf L^2(\Gamma_{\boldsymbol\Omega})}
-\langle
\mathbf w,
\mathbf A\nabla\mathbf v\cdot\vec n_{\partial\Omega}
\rangle_{\mathbf L^2(\Gamma_{\boldsymbol\Omega})}
}_{\text{boundary flux terms}}
+\underbrace{
\frac1h
\langle
\boldsymbol\gamma_\partial\mathbf w,\mathbf v
\rangle_{\mathbf L^2(\Gamma_{\boldsymbol\Omega})}
}_{\text{boundary penalty term}},\nonumber
\end{align}
and the associated linear functional is
\begin{align}
\ell_h(\mathbf v)
:={}&
(\mathbf f,\mathbf v)_{\mathbf L^2(\boldsymbol\Omega)}
+\langle\psi_\Gamma,\avg{\mathbf v}\rangle_{L^2(\Gamma)}
-\langle
g_\Gamma,
\avg{\mathbf A\nabla\mathbf v}\cdot\vec n_\Gamma
\rangle_{L^2(\Gamma)}
+\frac{\gamma_\Gamma}{h}
\langle g_\Gamma,\jump{\mathbf v}\rangle_{L^2(\Gamma)}\nonumber
\\
&-\langle
\mathbf g_D,
\mathbf A\nabla\mathbf v\cdot\vec n_{\partial\Omega}
\rangle_{\mathbf L^2(\Gamma_{\boldsymbol\Omega})}
+\frac1h
\langle
\boldsymbol\gamma_\partial\mathbf g_D,\mathbf v
\rangle_{\mathbf L^2(\Gamma_{\boldsymbol\Omega})}. \label{eq:nitsche-linear}
\end{align}
The symmetry of $\mathbf A$ makes $a_h(\mathbf w,\mathbf v)$ symmetric.
The relation between $h$ and the kernel center sets is specified in
Section~\ref{sec:trace-inverse}.
For the kernel trial spaces, sufficient lower bounds on the
penalty parameters are established in Section~\ref{sec:analysis}.
The analysis assumes exact geometry and exact integration.

Consistency holds under the weaker regularity assumption stated below.
\begin{lemma}[Consistency]
\label{lem:consistency}
Let $\mathbf u\in\mathbf H^{1+\delta}(\boldsymbol\Omega)$,
$\delta>1/2$, solve \eqref{eq:intro-interface-problem}.
Then
\begin{equation}
a_h(\mathbf u,\mathbf v_h)=\ell_h(\mathbf v_h)
\qquad
\forall\,\mathbf v_h\in\mathbf V_h.
\label{eq:consistency}
\end{equation}
Consequently, any solution $\mathbf u_h$ of
\eqref{eq:discrete-nitsche} satisfies Galerkin orthogonality:
\begin{equation}
a_h(\mathbf u-\mathbf u_h,\mathbf v_h)=0
\qquad
\forall\,\mathbf v_h\in\mathbf V_h.
\label{eq:galerkin-orthogonality}
\end{equation}
\end{lemma}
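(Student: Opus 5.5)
The plan is to integrate by parts subdomainwise and then match the resulting boundary and interface terms against those in $a_h$ and $\ell_h$. The regularity $\mathbf u\in\mathbf H^{1+\delta}(\boldsymbol\Omega)$ with $\delta>1/2$ is exactly what makes the one-sided conormal traces $A^\eta\nabla u^\eta\cdot\vec n$ well defined in $L^2$ on $\partial\Omega^\eta$. Indeed, $\nabla u^\eta\in H^{\delta}(\Omega^\eta)$, and $A^\eta\in W^{1,\infty}$ preserves this space. The trace theorem on Lipschitz domains for $1/2<\delta<3/2$ (truncating $\delta$ if it is larger) then gives a trace in $H^{\delta-1/2}(\partial\Omega^\eta)\subset L^2$. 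Moreover, $\nabla\cdot(A^\eta\nabla u^\eta)=-f^\eta\in L^2(\Omega^\eta)$.

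Fix $\mathbf v_h\in\mathbf V_h\subset\mathbf H^1(\boldsymbol\Omega)$. The first step is Green's formula on each $\Omega^\eta$. This is justified because $A^\eta\nabla u^\eta\in H(\mathrm{div},\Omega^\eta)$ has $L^2$ normal trace, and the $H^{-1/2}$–$H^{1/2}$ duality pairing then reduces to an $L^2$ integral. The result is
\[
(\mathbf A\nabla\mathbf u,\nabla\mathbf v_h)_{\mathbf L^2(\boldsymbol\Omega)}
=(\mathbf f,\mathbf v_h)_{\mathbf L^2(\boldsymbol\Omega)}
+\langle\mathbf A\nabla\mathbf u\cdot\vec n_{\partial\Omega},\mathbf v_h\rangle_{\mathbf L^2(\Gamma_{\boldsymbol\Omega})}
+\langle A^-\nabla u^-\cdot\vec n_\Gamma,v_h^-\rangle_{L^2(\Gamma)}
-\langle A^+\nabla u^+\cdot\vec n_\Gamma,v_h^+\rangle_{L^2(\Gamma)},
\]
where the signs follow from $\vec n_\Gamma$ being the outward normal of $\Omega^-$ on $\Gamma$. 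Next I use the elementary identity $a^-b^--a^+b^+=\avg{a}\jump{b}+\jump{a}\avg{b}$ to rewrite the interface terms as
\[
\langle\avg{\mathbf A\nabla\mathbf u}\cdot\vec n_\Gamma,\jump{\mathbf v_h}\rangle_{L^2(\Gamma)}+\langle\jump{\mathbf A\nabla\mathbf u}\cdot\vec n_\Gamma,\avg{\mathbf v_h}\rangle_{L^2(\Gamma)}.
\]
The flux jump condition turns the second term into $\langle\psi_\Gamma,\avg{\mathbf v_h}\rangle_{L^2(\Gamma)}$.

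Then I substitute into $a_h(\mathbf u,\mathbf v_h)$. The first interface flux term $-\langle\avg{\mathbf A\nabla\mathbf u}\cdot\vec n_\Gamma,\jump{\mathbf v_h}\rangle$ cancels the averaged term above. Likewise, the first boundary flux term cancels $\langle\mathbf A\nabla\mathbf u\cdot\vec n_{\partial\Omega},\mathbf v_h\rangle$. In the remaining symmetry and penalty terms I use $\jump{\mathbf u}=g_\Gamma$ and $\mathbf u=\mathbf g_D$ on $\Gamma_{\boldsymbol\Omega}$, taken as traces of $H^{1+\delta}$ functions. These remaining terms then coincide with the last four terms of $\ell_h(\mathbf v_h)$. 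The trace pairings with $\avg{\mathbf A\nabla\mathbf v_h}\cdot\vec n_\Gamma$ are meaningful because of the standing assumption that $\mathbf V_h$ has $L^2$ conormal traces. This establishes \eqref{eq:consistency}.

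Galerkin orthogonality follows by subtracting \eqref{eq:discrete-nitsche} from \eqref{eq:consistency} and using bilinearity. I expect the only delicate point to be the rigorous justification of the Green formula and the $L^2$ trace meaning on Lipschitz boundaries. I would handle it by density of smooth functions in $\{w\in H^{1+\delta}: \nabla\cdot(A\nabla w)\in L^2\}$, or directly via the $H(\mathrm{div})$ normal trace as described above.
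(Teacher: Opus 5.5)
Your proposal is correct and follows the paper's route: subdomainwise Green's formula (the paper cites Mikhailov's Corollary~6.11 where you sketch the $H(\mathrm{div})$/trace argument), then the average--jump identity with the flux-jump condition, then substitution of the value conditions into $a_h$, and Galerkin orthogonality by subtraction. One small precision: $W^{1,\infty}$ multiplies $H^\delta$ into itself only for $\delta\le1$, so replace $\delta$ by $\min\{\delta,1\}$ before making that claim.
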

\begin{proof}
The assumed regularity and the equations
$-\nabla\!\cdot(A^\eta\nabla u^\eta)=f^\eta\in L^2(\Omega^\eta)$
justify Green's formula with $L^2$ conormal traces
\cite[Corollary~6.11]{Mikhailov2013}.
For $\mathbf v_h\in\mathbf V_h$, integration by parts on the
subdomains and the prescribed flux jump give
\begin{align*}
(\mathbf A\nabla\mathbf u,\nabla\mathbf v_h)_{\mathbf L^2(\boldsymbol\Omega)}
={}&
(\mathbf f,\mathbf v_h)_{\mathbf L^2(\boldsymbol\Omega)}
+\langle\psi_\Gamma,\avg{\mathbf v_h}\rangle_{L^2(\Gamma)}
\\
&+
\langle
\avg{\mathbf A\nabla\mathbf u}\cdot\vec n_\Gamma,
\jump{\mathbf v_h}
\rangle_{L^2(\Gamma)}
+
\langle
\mathbf A\nabla\mathbf u\cdot\vec n_{\partial\Omega},
\mathbf v_h
\rangle_{\mathbf L^2(\Gamma_{\boldsymbol\Omega})}.
\end{align*}
Substituting this identity and the prescribed value conditions
into \eqref{eq:nitsche-bilinear} yields \eqref{eq:consistency}.
Galerkin orthogonality follows by subtracting
\eqref{eq:discrete-nitsche}.
\end{proof}

\subsection{Trace control required for stability}\label{subsec:stability-requirement}
Consistency alone does not ensure stability of the discrete
problem. The boundary and interface flux terms have no fixed
sign and must be controlled uniformly by the diffusion and
penalty terms.
We identify the discrete trace estimate that provides this
control and will verify it for kernel spaces in Section~
\ref{sec:trace-inverse}.

To establish stability of \eqref{eq:discrete-nitsche}, we seek
the continuity and coercivity bounds
\begin{equation}
|a_h(\mathbf w_h,\mathbf v_h)|
\le C\|\mathbf w_h\|_h\|\mathbf v_h\|_h,\quad
a_h(\mathbf v_h,\mathbf v_h)
\ge c\|\mathbf v_h\|_h^2,
\quad
\mathbf w_h,\mathbf v_h\in\mathbf V_h,
\label{eq:nitsche-stability-goal}
\end{equation}
with constants $C,c>0$ independent of $h$.
The coercivity bound will be established for sufficiently large
fixed penalty parameters.
Here the Nitsche energy norm is
\begin{equation}
\|\mathbf v\|_h^2
:=
\|\nabla\mathbf v\|_{\mathbf L^2(\boldsymbol\Omega)}^2
+h^{-1}\|\jump{\mathbf v}\|_{L^2(\Gamma)}^2
+h^{-1}\|\mathbf v\|_{\mathbf L^2(\Gamma_{\boldsymbol\Omega})}^2,
\qquad
\mathbf v\in\mathbf H^1(\boldsymbol\Omega).
\label{eq:nitsche-energy-norm}
\end{equation}
The subdomain gradients control each component only up to a
constant. The following Poincar\'e inequality controls these
constants through the interface jump and the exterior trace on
$\Gamma_{\Omega^+}$.

\begin{lemma}[Poincar\'e inequality with interface terms]
\label{lem:interface-poincare}
Under the geometric assumptions of
Subsection~\ref{subsec:interface-setting} with
$|\Gamma|_{d-1}>0$ and
$|\Gamma_{\Omega^+}|_{d-1}>0$, there exists $C>0$ such that every
$\mathbf v\in\mathbf H^1(\boldsymbol\Omega)$ satisfies
\begin{equation}
\|\mathbf v\|_{\mathbf L^2(\boldsymbol\Omega)}^2
\le C\left(
\|\nabla\mathbf v\|_{\mathbf L^2(\boldsymbol\Omega)}^2
+\|\jump{\mathbf v}\|_{L^2(\Gamma)}^2
+\|v^+\|_{\mathbf L^2(\Gamma_{\Omega^+})}^2
\right).
\label{eq:interface-poincare}
\end{equation}
\end{lemma}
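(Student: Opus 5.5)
We argue by contradiction, using the compactness of the embedding $\mathbf H^1(\boldsymbol\Omega)\hookrightarrow\mathbf L^2(\boldsymbol\Omega)$ on the bounded Lipschitz subdomains $\Omega^\pm$ (Rellich--Kondrachov). Suppose \eqref{eq:interface-poincare} fails. Then there is a sequence $\mathbf v_k\in\mathbf H^1(\boldsymbol\Omega)$ normalized by $\|\mathbf v_k\|_{\mathbf L^2(\boldsymbol\Omega)}=1$ with
\[
\|\nabla\mathbf v_k\|_{\mathbf L^2(\boldsymbol\Omega)}^2+\|\jump{\mathbf v_k}\|_{L^2(\Gamma)}^2+\|v_k^+\|_{L^2(\Gamma_{\Omega^+})}^2\to0 .
\]
The sequence is bounded in $\mathbf H^1(\boldsymbol\Omega)$. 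Passing to a subsequence, $\mathbf v_k\rightharpoonup\mathbf v$ weakly in $\mathbf H^1$ and $\mathbf v_k\to\mathbf v$ strongly in $\mathbf L^2$, so $\|\mathbf v\|_{\mathbf L^2(\boldsymbol\Omega)}=1$. Since $\nabla\mathbf v_k\to\mathbf 0$ in $\mathbf L^2$, the gradients converge strongly as well. Hence $\mathbf v_k\to\mathbf v$ strongly in $\mathbf H^1(\boldsymbol\Omega)$ and $\nabla\mathbf v=\mathbf 0$. Because $\Omega^-$ and $\Omega^+$ are connected, $v^\pm\equiv c_\pm$ are constants.

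Next we use continuity of the trace operators $H^1(\Omega^\eta)\to L^2(\partial\Omega^\eta)$ on Lipschitz domains. From $\mathbf v_k\to\mathbf v$ in $\mathbf H^1$ we get $\jump{\mathbf v_k}\to c_--c_+$ in $L^2(\Gamma)$ and $v_k^+\to c_+$ in $L^2(\Gamma_{\Omega^+})$. The limits of these norms are zero, which gives $|c_--c_+|\,|\Gamma|_{d-1}^{1/2}=0$ and $|c_+|\,|\Gamma_{\Omega^+}|_{d-1}^{1/2}=0$. The positivity assumptions $|\Gamma|_{d-1}>0$ and $|\Gamma_{\Omega^+}|_{d-1}>0$ force $c_+=0$ and then $c_-=0$. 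So $\mathbf v=\mathbf 0$, contradicting $\|\mathbf v\|_{\mathbf L^2(\boldsymbol\Omega)}=1$.

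The argument is standard, and the only delicate point is the set of functional-analytic ingredients. We need compactness of $H^1(\Omega^\eta)\hookrightarrow L^2(\Omega^\eta)$ and boundedness of the trace on the relatively open boundary pieces $\Gamma\subset\partial\Omega^\pm$ and $\Gamma_{\Omega^+}\subset\partial\Omega^+$. Both hold because each $\Omega^\eta$ is a bounded Lipschitz domain, and the traces on subsets of $\partial\Omega^\eta$ are restrictions of the full boundary trace. The connectedness of each subdomain is essential: it is what reduces the kernel of the gradient to a pair of constants. The resulting constant $C$ depends only on the geometry, which is all the statement requires.

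An alternative, more constructive route applies the classical Poincaré inequality with a boundary trace term on each subdomain separately. On $\Omega^+$ we would use the trace on $\Gamma_{\Omega^+}$. On $\Omega^-$ we would use the trace on $\Gamma$, bounding $\|v^-\|_{L^2(\Gamma)}\le\|\jump{\mathbf v}\|_{L^2(\Gamma)}+\|v^+\|_{L^2(\Gamma)}$ and then controlling $\|v^+\|_{L^2(\Gamma)}$ by $\|v^+\|_{H^1(\Omega^+)}$ via the trace theorem. That route also relies on a compactness-based Poincaré lemma on each subdomain, so the contradiction argument above is the cleaner choice.
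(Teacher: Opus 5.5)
Your compactness argument is correct. The normalization, Rellich--Kondrachov on each bounded Lipschitz $\Omega^\eta$, the upgrade to strong $\mathbf H^1$ convergence from $\nabla\mathbf v_k\to\mathbf 0$, connectedness giving a pair of constants, and trace continuity on $\Gamma$ and $\Gamma_{\Omega^+}$ with the positive surface measures all go through as you state.

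The paper does not argue by contradiction. It takes exactly the route you sketch as your alternative:
\begin{enumerate}
\item apply the boundary Poincar\'e--Friedrichs inequality on $(\Omega^+,\Gamma_{\Omega^+})$;
\item use the trace inequality to bound $\|v^+\|_{L^2(\Gamma)}$ by $\|\nabla v^+\|_{L^2(\Omega^+)}+\|v^+\|_{L^2(\Gamma_{\Omega^+})}$;
\item write $v^-=\jump{\mathbf v}+v^+$ on $\Gamma$;
\item apply the same Poincar\'e--Friedrichs inequality on $(\Omega^-,\Gamma)$.
\end{enumerate}

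The two approaches buy different things. Your argument proves the coupled estimate in one stroke from Rellich and trace continuity. It extends verbatim to any number of subdomains, provided the only piecewise constants annihilated by the right-hand side are zero. The price is that $C$ is obtained non-constructively and its dependence on the geometry is hidden.

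The paper's chaining argument reduces the product-space inequality to single-domain ingredients. As a result, $C$ is expressed through the Poincar\'e--Friedrichs constants on $\Omega^\pm$ and the trace constant of $\Omega^+$. It also makes visible how control propagates from $\Gamma_{\Omega^+}$ through $\Omega^+$ and across $\Gamma$ into $\Omega^-$. This is why a single exterior trace suffices even in the inclusion geometry with $\Gamma_{\Omega^-}=\varnothing$. Finally, it isolates the single-domain inequality \eqref{eq:PF-boundary}, which the paper reuses in Corollary~\ref{cor:single-domain-trace}.

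As you note, that single-domain inequality is itself usually proved by compactness. So neither route gives fully explicit constants; the paper's is simply more modular.
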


\begin{proof}
For a bounded connected Lipschitz domain $D$ and a measurable
boundary portion $\Sigma$ of positive surface measure, the
Poincar\'e--Friedrichs inequality with a boundary term
\cite{Brenner2003,BrennerScott2008} gives
\begin{equation}
\|z\|_{L^2(D)}
\le C\left(\|\nabla z\|_{L^2(D)}+\|z\|_{L^2(\Sigma)}\right),
\qquad z\in H^1(D).
\label{eq:PF-boundary}
\end{equation}
Applying the Poincar\'e-Friedrichs inequality~\eqref{eq:PF-boundary} with
$(D,\Sigma)=(\Omega^+,\Gamma_{\Omega^+})$ gives
\begin{equation}
\|v^+\|_{L^2(\Omega^+)}
\le C\left(
\|\nabla v^+\|_{L^2(\Omega^+)}
+\|v^+\|_{L^2(\Gamma_{\Omega^+})}
\right).
\label{eq:PF-plus}
\end{equation}
The continuous trace inequality \cite{BrennerScott2008} and \eqref{eq:PF-plus} give
\begin{align*}
\|v^+\|_{L^2(\Gamma)} \le
C
\|v^+\|_{L^2(\Omega^+)}^{\frac 12}
\| v^+\|_{H^1(\Omega^+)}^{\frac 12}
&\le C\left(
\|v^+\|_{L^2(\Omega^+)}
+\|\nabla v^+\|_{L^2(\Omega^+)}
\right)\\
&\le C\left(
\|\nabla v^+\|_{L^2(\Omega^+)}
+\|v^+\|_{L^2(\Gamma_{\Omega^+})}
\right).
\end{align*}
Since $v^-|_\Gamma=\jump{\mathbf v}+v^+|_\Gamma$, we have
$\|v^-\|_{L^2(\Gamma)}
\le \|\jump{\mathbf v}\|_{L^2(\Gamma)}+\|v^+\|_{L^2(\Gamma)}$.
Applying the Poincar\'e-Friedrichs inequality~\eqref{eq:PF-boundary} with
$(D,\Sigma)=(\Omega^-,\Gamma)$ therefore yields
\[
\|v^-\|_{L^2(\Omega^-)}
\le C\left(
\|\nabla\mathbf v\|_{\mathbf L^2(\boldsymbol\Omega)}
+\|\jump{\mathbf v}\|_{L^2(\Gamma)}
+\|v^+\|_{L^2(\Gamma_{\Omega^+})}
\right).
\]
Squaring the two subdomain estimates and adding them proves
\eqref{eq:interface-poincare}.
\end{proof}

Since $h\le1$, the interface Poincar\'e inequality
\eqref{eq:interface-poincare} implies
$\|\mathbf v\|_{\mathbf H^1(\boldsymbol\Omega)}
\le C\|\mathbf v\|_h$ for every
$\mathbf v\in\mathbf H^1(\boldsymbol\Omega)$.
The diffusion and penalty terms are bounded in this norm.
Control of the remaining flux terms requires a discrete estimate.
For a single-domain Dirichlet problem, Chernih et al.~\cite[Section~4]{ChernihLeGia2014} assume a boundary
normal-derivative estimate in terms of the volume gradient
in their kernel Galerkin stability analysis.
For the present interface formulation, we seek to establish
the sufficient discrete estimate
\begin{equation}
h^{1/2}\left(
\left\|\avg{\mathbf A\nabla\mathbf v_h}\cdot\vec n_\Gamma
\right\|_{L^2(\Gamma)}
+\left\|\mathbf A\nabla\mathbf v_h\cdot\vec n_{\partial\Omega}
\right\|_{\mathbf L^2(\Gamma_{\boldsymbol\Omega})}
\right)
\le C_{\rm tr}\|\mathbf v_h\|_h,
\quad \mathbf v_h\in\mathbf V_h,
\label{eq:discrete-conormal-control}
\end{equation}
where $C_{\rm tr}$ is independent of $h$ and the penalty parameters.
This estimate serves the same purpose in the stability analysis,
while accounting for the averaged conormal flux on the interface
and the conormal fluxes on the exterior boundary portions
of the two subdomains.

If \eqref{eq:discrete-conormal-control} holds,
Cauchy--Schwarz and Young's inequalities give continuity and,
for sufficiently large penalty parameters independent of
refinement, coercivity, as detailed in
Theorem~\ref{thm:well-posedness}.
For the kernel product spaces introduced in
Section~\ref{sec:trace-inverse},
Theorem~\ref{thm:conormal-trace} proves
\eqref{eq:discrete-conormal-control} under
Assumption~\ref{ass:product-kernel} and the coefficient
conditions of Subsection~\ref{subsec:interface-setting}.
That section also establishes the approximation estimates
used in the error analysis.

\section{Kernel trial spaces and trace inverse estimates}
\label{sec:trace-inverse}
We now specialize the product trial space of
Section~\ref{sec:setting} to independently generated
Sobolev-kernel spaces.
For these spaces, we establish the sufficient discrete
conormal estimate \eqref{eq:discrete-conormal-control}
and construct comparison functions with approximation
bounds in the Nitsche norms.
The geometric and coefficient assumptions of
Subsection~\ref{subsec:interface-setting} remain in force.

\subsection{Sobolev kernels and trial spaces}
\label{subsec:kernel-setting}

For a domain $D\subset\mathbb R^d$ and a set
$X_D=\{\vec x_1,\ldots,\vec x_{N_D}\}\subset D$ of
$N_D\ge2$ distinct trial centers, define the fill distance, separation
radius, and mesh ratio by
\begin{equation}
h_D:=\sup_{\vec x\in D}\min_{\vec x_j\in X_D}|\vec x-\vec x_j|,
\qquad
q_D:=\frac12\min_{i\ne j}|\vec x_i-\vec x_j|,
\qquad
\rho_D:=\frac{h_D}{q_D}.
\label{eq:center-scales}
\end{equation}

\begin{assumption}[Sobolev kernel setting]
\label{ass:sobolev-kernel}
The domain $D$ is bounded and Lipschitz, and the center set $X_D$
satisfies $0<q_D\le1$.
The kernel $\phi_m(\vec x,\vec y)=\Phi_m(\vec x-\vec y)$
is real-valued, continuous, and strictly positive definite.
The Fourier transform of $\Phi_m$ is represented by a function satisfying
\begin{equation}
c_\phi(1+|\vec\xi|^2)^{-m}
\le\widehat\Phi_m(\vec\xi)
\le C_\phi(1+|\vec\xi|^2)^{-m},
\qquad \vec\xi\in\mathbb R^d,
\label{eq:sobolev-kernel}
\end{equation}
for some Sobolev smoothness order $m>d/2$ and constants $0<c_\phi\le C_\phi<\infty$.
\end{assumption}

\begin{assumption}[Product kernel discretization]
\label{ass:product-kernel}
On each subdomain of Subsection~\ref{subsec:interface-setting},
Assumption~\ref{ass:sobolev-kernel} holds with
$D=\Omega^\eta$ and $X_D=X^\eta$, $\eta\in\{-,+\}$.
The same kernel is used in both subdomains, with $m\ge2$.
The kernel, including its order and scale, is fixed under refinement.
Write $h_\eta$, $q_\eta$, and $\rho_\eta$ for the parameters of $X^\eta$.
The center families are quasi-uniform with comparable fill distances:
\begin{equation}
\rho_\eta\le\rho_0,
\quad
c_hh\le h_\eta\le h,
\quad
h:=\max\{h_-,h_+\}\le h_0\le1, \quad q:=\min\{q_-,q_+\},
\label{eq:center-assumptions}
\end{equation}
where $\rho_0$ and $0<c_h\le1$ are independent of refinement,
and $h_0$ is sufficiently small.
\end{assumption}

These assumptions give $q_\eta\simeq h_\eta\simeq h$, with
comparison constants independent of refinement, and identify the
scale $h$ in the Nitsche formulation.
No centers are required on the interface or exterior boundary.

Let $\mathcal N_{\phi_m}$ denote the native space, the reproducing
kernel Hilbert space associated with $\phi_m$.
Under Assumption~\ref{ass:sobolev-kernel}, the Fourier bounds
\eqref{eq:sobolev-kernel} imply
\begin{equation}
\mathcal N_{\phi_m}(\mathbb R^d)\simeq H^m(\mathbb R^d),
\qquad
\mathcal N_{\phi_m}(D)\simeq H^m(D),
\label{eq:native-sobolev-equivalence}
\end{equation}
where $\simeq$ denotes equality of spaces with equivalent norms
\cite{Wendland2005,SchabackWendland2006}.
Point evaluation is continuous since $m>d/2$.

The numerical experiments use two kernel families satisfying
\eqref{eq:sobolev-kernel}.
The globally supported Mat\'ern kernel is
\begin{equation}
\phi^{\mathrm M}_{m,\varepsilon}(\vec x,\vec y)
=\frac{2^{1-\nu}}{\Gamma(\nu)}z^\nu K_\nu(z),
\qquad
z=\varepsilon|\vec x-\vec y|,
\quad \nu=m-\frac d2>0,
\label{eq:matern-kernel}
\end{equation}
where $\varepsilon>0$ is the scale parameter and $K_\nu$ is the
modified Bessel function of the second kind.
The value at $z=0$ is defined by continuity.
For $d=2,3$, the compactly supported Wendland kernels satisfy
\begin{equation}
\phi^{\mathrm W}_{d,k,\varepsilon}(\vec x,\vec y)
=\psi_{d,k}\left(\varepsilon |\vec x-\vec y|\right),
\qquad
\mathcal N_{\phi^{\mathrm W}_{d,k,\varepsilon}}(\mathbb R^d)
\simeq H^{d/2+k+1/2}(\mathbb R^d),
\label{eq:wendland-kernel}
\end{equation}
where the support radius is $\varepsilon^{-1}$ and $m=d/2+k+1/2$.
Table~\ref{tab:wendland} lists representative profiles
\cite{Wendland1995,Wendland2005}.
The requirement $m\ge2$ excludes $k=0$ when $d=2$.

\begin{table}[tbp]
\centering
\caption{Normalized Wendland profiles for $d=2,3$, with
$(1-r)_+:=\max\{1-r,0\}$.}
\label{tab:wendland}
\footnotesize
\setlength{\tabcolsep}{8pt}
\renewcommand{\arraystretch}{1.2}
\begin{tabular}{cl@{\hspace{18pt}}cl}
\toprule
$k$ & $\psi_{d,k}(r)$ & $k$ & $\psi_{d,k}(r)$\\
\midrule
$0$ & $(1-r)_+^2$
& $2$ & $\frac13(1-r)_+^6(35r^2+18r+3)$\\
$1$ & $(1-r)_+^4(4r+1)$
& $3$ & $(1-r)_+^8(32r^3+25r^2+8r+1)$\\
\bottomrule
\end{tabular}
\end{table}

For a fixed kernel, the translates centered at $X_D$ span
\begin{equation}
V_{X_D,\Phi_m,D}
:=\operatorname{span}
\left\{\Phi_m(\cdot-\vec x_j)|_D:1\le j\le N_D\right\}
\subset\mathcal N_{\phi_m}(D).
\label{eq:kernel-trial-space}
\end{equation}
Strict positive definiteness makes these translates a basis of
dimension $N_D$.
For $\vec c=(c_1,\ldots,c_{N_D})^\top$, define
$v_{\vec c}:=\sum_{j=1}^{N_D}c_j\phi_m(\cdot,\vec x_j)|_D$
and $G_D:=\bigl(\phi_m(\vec x_i,\vec x_j)\bigr)_{i,j=1}^{N_D}$.
The kernel matrix $G_D$ is symmetric positive definite, and the
reproducing property gives
$\|v_{\vec c}\|_{\mathcal N_{\phi_m}(D)}^2=\vec c^\top G_D\vec c$.
For every continuous target $w$, the interpolant
$I_{X_D}w\in V_{X_D,\Phi_m,D}$ is uniquely determined by
$I_{X_D}w(\vec x_i)=w(\vec x_i)$, $1\le i\le N_D$.
Its coefficients solve $G_D\vec c=\vec w_D$, where
$\vec w_D:=\bigl(w(\vec x_i)\bigr)_{i=1}^{N_D}$.
Thus the translate coefficients are generally different from
the nodal values.

For the interface problem, under Assumption~\ref{ass:product-kernel},
set $V_h^\eta:=V_{X^\eta,\Phi_m,\Omega^\eta}$ for $\eta\in\{-,+\}$.
The product trial space of Subsection~\ref{subsec:nitsche-formulation}
then becomes
\begin{equation}
\mathbf V_h
:=\prod_{\eta\in\{-,+\}}V_{X^\eta,\Phi_m,\Omega^\eta}
=V_h^-\times V_h^+
\subset\mathbf H^m(\boldsymbol\Omega).
\label{eq:product-kernel-space}
\end{equation}
The components satisfy no boundary or interface value constraints.
Their conormal traces belong to $L^2$ since $m\ge2$.
Let $\{\boldsymbol\varphi_i\}_{i=1}^{N}$, with
$N=|X^-|+|X^+|$, be the basis obtained by placing each translate
in its subdomain component and setting the other component to zero.
Writing $\mathbf u_h=\sum_{j=1}^{N}c_j\boldsymbol\varphi_j$
gives the Galerkin system \cite{Wendland1999}
\begin{equation}
K_h\vec c=\vec b,
\qquad
(K_h)_{ij}:=a_h(\boldsymbol\varphi_j,\boldsymbol\varphi_i),
\qquad
b_i:=\ell_h(\boldsymbol\varphi_i).
\label{eq:kernel-galerkin-system}
\end{equation}
The stiffness matrix $K_h$ is distinct from the interpolation
matrices $G_{\Omega^\eta}$.
For sufficiently large penalty parameters, its positive definiteness
follows from the coercivity proved in Section~\ref{sec:analysis}.

Approximation in these spaces depends on the kernel order $m$
and the available target regularity $s$.
Under Assumption~\ref{ass:sobolev-kernel}, let
$2\le s\le m$ with $s>d/2$.
For sufficiently small $h_D$, every $w\in H^s(D)$ satisfies
\begin{equation}
\|w-I_{X_D}w\|_{H^j(D)}
\le C h_D^{s-j}\rho_D^{m-s}\|w\|_{H^s(D)},
\quad
w\in H^s(D),\quad j\in\mathbb N_0,\quad j\le s,
\label{eq:kernel-interpolation}
\end{equation}
with $C$ independent of $X_D$ and $w$
\cite[Appendix~C]{WynneBriolGirolami2021}.
In particular, $s<m$ permits targets outside the native space.
For integer $s$, the endpoint $j=s$ gives a stability bound. For the same range of $s$,
define the product interpolant componentwise by
$(I_h\mathbf w)^\eta:=I_{X^\eta}w^\eta$ for $\eta\in\{-,+\}$.
Under Assumption~\ref{ass:product-kernel}, the bounded mesh ratios
and $h_\eta\le h$ give
\begin{equation}
\|\mathbf w-I_h\mathbf w\|_{\mathbf H^j(\boldsymbol\Omega)}
\le C h^{s-j}\|\mathbf w\|_{\mathbf H^s(\boldsymbol\Omega)},
\quad\mathbf w\in\mathbf H^s(\boldsymbol\Omega), \quad j\in\mathbb N_0,\quad j\le s\le m.
\label{eq:product-interpolation}
\end{equation}

\begin{remark}[Kernel order and refinement]
\label{rem:kernel-refinement}
\textit{For $j=0,1$, the order $h^{s-j}$ agrees with the standard
approximation order of degree-$p$ finite elements when $p+1\ge s$
\cite{BrennerScott2008}.
Here $m$ determines the native-space smoothness, while $s$ records
the regularity used in the estimate.
The limit $h\to0$ refines the centers at fixed kernel order and scale.
Changing $m$ on a fixed center set changes the basis functions but
leaves the dimension unchanged, unlike finite element $p$-refinement.
Equal dimensions do not imply equal computational cost.
The effects of kernel support, and conditioning
are examined numerically in Subsection~\ref{subsec:example1} and \ref{subsec:example3}.}
\end{remark}

The product space has independent interface traces and no
prescribed boundary values. We next show that its conormal
traces nevertheless satisfy the control required by the
Nitsche formulation.

\subsection{Product-space trace inverse estimates}
\label{subsec:kernel-trace}
For a single-domain Dirichlet problem, stability of the symmetric
Nitsche method requires control of the boundary flux terms by
the volume diffusion and boundary penalty terms.
In the kernel Galerkin setting, Chernih et al.
\cite[Section~4]{ChernihLeGia2014} base their stability analysis
on an assumed estimate for the boundary normal derivative
in terms of the volume gradient.
This motivates establishing the required flux control directly
from the kernel and center assumptions.
For the present spaces, a sufficient trace inverse estimate is
\begin{equation}
\|\nabla v\|_{L^2(\partial D)}
\le Ch_D^{-1/2}
\left(\|\nabla v\|_{L^2(D)}+\|v\|_{L^2(\partial D)}\right),
\qquad v\in V_{X_D,\Phi_m,D}.
\label{eq:single-domain-trace}
\end{equation}
We develop the proof in the more general interface setting,
using independent kernel spaces on the two subdomains.
The resulting trace estimates yield the discrete conormal
control \eqref{eq:discrete-conormal-control}, and
\eqref{eq:single-domain-trace} is obtained as a corollary.

For finite elements on shape-regular, quasi-uniform fitted meshes,
boundary flux control follows from polynomial trace inverse
estimates for the elementwise gradients
\cite{BrennerScott2008,BeckerHansboStenberg2003}.
Kernel translates lack the elementwise polynomial structure used
in this argument, and their derivatives need not belong to the
original trial space.
Applying a continuous trace estimate to the gradient instead
introduces the $H^2(D)$ norm of the trial function.
Controlling this norm by its $H^1(D)$ norm requires a
bounded-domain inverse estimate at low Sobolev orders.
The Bernstein inequality of 
\cite[Lemma~3.2]{CheungLingSchaback2018} requires the lower
Sobolev order to exceed $d/2$ and does not directly cover this case.
The recent full-range result of \cite[Theorem~4.6]{SunLing2025} provides the required
$H^2$--$H^1$ estimate, together with the $H^1$--$L^2$ estimate
used below for value traces.

\begin{lemma}[Full-range Bernstein inequality
{\cite[Theorem~4.6]{SunLing2025}}]
\label{lem:full-range-bernstein}
Under Assumption~\ref{ass:sobolev-kernel}, for every pair of real
numbers $0\le t\le\alpha\le m$, there exists a constant $C>0$,
independent of $X_D$, such that
\begin{equation}
\|v\|_{H^\alpha(D)}
\le Cq_D^{t-\alpha}\|v\|_{H^t(D)},
\qquad v\in V_{X_D,\Phi_m,D}.
\label{eq:full-range-bernstein}
\end{equation}
\end{lemma}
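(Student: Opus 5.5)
The plan is to prove the endpoint case $\alpha=m$ first and then recover all intermediate orders by interpolation. Suppose that for every $0\le t\le m$,
\[
\|v\|_{H^m(D)}\le Cq_D^{t-m}\|v\|_{H^t(D)},\qquad v\in V_{X_D,\Phi_m,D}.
\]
For $t\le\alpha\le m$, write $\alpha=\theta t+(1-\theta)m$. The interpolation inequality for Sobolev spaces on the Lipschitz domain $D$ (via Stein's extension operator) gives
\[
\|v\|_{H^\alpha(D)}\le C\|v\|_{H^t(D)}^{\theta}\|v\|_{H^m(D)}^{1-\theta}\le Cq_D^{(t-m)(1-\theta)}\|v\|_{H^t(D)}=Cq_D^{t-\alpha}\|v\|_{H^t(D)}.
\]
So everything reduces to the endpoint estimate. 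By \eqref{eq:native-sobolev-equivalence}, $\|v\|_{H^m(D)}$ may be replaced by the native-space norm $\|v\|_{\mathcal N_{\phi_m}(D)}$.

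For the endpoint I would use the reproducing-property duality argument of Narcowich--Ward--Wendland. For $v=v_{\vec c}$ and any $w$ in the native space, $\langle v,w\rangle_{\mathcal N_{\phi_m}}=\sum_jc_jw(\vec x_j)$. Hence if $b$ is any function with $b(\vec x_j)=v(\vec x_j)$ for all $j$, then
\[
\|v\|_{\mathcal N_{\phi_m}(D)}^2=\vec c^\top G_D\vec c=\sum_jc_jb(\vec x_j)=\langle v,b\rangle_{\mathcal N_{\phi_m}(\mathbb R^d)}\le\|v\|_{\mathcal N_{\phi_m}}\|b\|_{\mathcal N_{\phi_m}(\mathbb R^d)}.
\]
I would take $b$ band-limited to $|\vec\xi|\le\sigma:=\kappa/q_D$. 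The Fourier bounds \eqref{eq:sobolev-kernel} then give
\[
\|b\|_{\mathcal N_{\phi_m}}\le C\sigma^{m-t}\|b\|_{H^t(\mathbb R^d)}\simeq q_D^{t-m}\|b\|_{H^t(\mathbb R^d)}.
\]
The endpoint estimate therefore follows once $b$ is built with
\[
b(\vec x_j)=v(\vec x_j),\qquad \|b\|_{H^t(\mathbb R^d)}\le C\|v\|_{H^t(D)},
\]
with $C$ independent of $X_D$.

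Constructing this $b$ is the step I expect to be the main obstacle. For $t>d/2$ it is the known band-limited interpolation result on $q_D$-separated sets: extend $v$ to $Ev\in H^t(\mathbb R^d)$, band-limit it, and correct the nodal residuals with scaled band-limited bumps. That result carries the restriction $t>d/2$ seen in \cite{CheungLingSchaback2018}, because there $|v(\vec x_j)|$ is controlled by the $H^t$ norm. For $0\le t\le d/2$, point values are not bounded by $H^t$ norms, so the argument must exploit the fact that $v$ itself lies in the trial space.

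My first attempt for low $t$ would be a smoothing argument. Replace $\delta_{\vec x_j}$ by mollified functionals $q_D^{-d}\kappa((\cdot-\vec x_j)/q_D)$, using interior cone conditions of the Lipschitz domain near the boundary so the mollifiers stay inside $D$. The discrepancy between $\sum_jc_jv(\vec x_j)$ and its mollified version is bounded using the smoothness of $v$ at scale $q_D$, namely by a small multiple of $\|v\|_{\mathcal N_{\phi_m}}^2$ from the $H^m$ interpolation error at that scale. This discrepancy is then absorbed on the left. The mollified pairing, by contrast, is bounded by $\|v\|_{H^t(D)}$ times the $\widetilde H^{-t}$ norm of a band-limited-type function, whose size is controlled through the same $\sigma^{m-t}$ factor.

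With constants depending only on $D$, $m$, $t$, $\alpha$ and the kernel bounds, this yields $\|v\|_{\mathcal N_{\phi_m}}\le Cq_D^{t-m}\|v\|_{H^t(D)}$. The interpolation step above then completes \eqref{eq:full-range-bernstein}.
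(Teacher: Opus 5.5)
Your reduction to $\alpha=m$ by interpolation is sound, and so is your endpoint argument for $t>d/2$: the minimum-norm property of the kernel expansion, a band-limited interpolant with $\sigma\simeq q_D^{-1}$, and $\|b\|_{\mathcal N_{\phi_m}}\le C\sigma^{m-t}\|b\|_{H^t(\mathbb R^d)}$. These are the same ingredients the paper adapts from \cite{SunLing2025} in Lemma~\ref{lem:scaled-bernstein}.

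The gap is the range $0\le t\le d/2$, which is what makes the inequality full-range. There your mollifier sketch asserts its key steps rather than deriving them.
\begin{itemize}
\item To bound $\sum_j c_j\bigl(v(\vec x_j)-\lambda_j(v)\bigr)$ by a multiple of $\|v\|_{\mathcal N_{\phi_m}}^2$, you need $\|\vec c\|_2\le Cq_D^{d/2-m}\|v\|_{\mathcal N_{\phi_m}}$. This is the lower eigenvalue bound for $G_D$, and you never invoke it.
\item You also need a local estimate $|v(\vec x_j)-\lambda_j(v)|\le C\delta^{m-d/2}|v|_{H^m}$ on patches of radius $\delta$. This requires the $\lambda_j$ to reproduce polynomials of degree below $m$; otherwise lower-order seminorms enter that are not small relative to $\|v\|_{\mathcal N_{\phi_m}}$.
\item Combining the two gives a factor $C(\delta/q_D)^{m-d/2}$. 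At $\delta=q_D$, as you wrote, this is $O(1)$, so nothing can be absorbed unless $\delta=\epsilon q_D$ with $\epsilon$ small.
\item The function $\mu=\sum_jc_j\kappa_j$ is compactly supported, not band-limited. Near $\partial D$ your cone-adapted bumps are not translates of one profile, so $\widehat\mu$ does not factor through $\sum_jc_je^{-i\vec x_j\cdot\vec\xi}$. The bound $\|\mu\|_{\widetilde H^{-t}}\lesssim q_D^{t-m}\|v\|_{\mathcal N_{\phi_m}}$ is therefore unjustified.
\end{itemize}

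The missing idea is that no new construction is needed. Once the endpoint holds for one $r\in(d/2,m)$, apply the interpolation inequality from your first step with $r$ placed between $t$ and $m$; all lower orders then follow by absorption. With $\theta=(m-r)/(m-t)$,
\[
\|v\|_{H^m(D)}\le Cq_D^{r-m}\|v\|_{H^r(D)}\le Cq_D^{r-m}\|v\|_{H^t(D)}^{\theta}\|v\|_{H^m(D)}^{1-\theta}.
\]
Dividing by $\|v\|_{H^m(D)}^{1-\theta}$ for $v\ne0$ gives $\|v\|_{H^m(D)}\le Cq_D^{(r-m)/\theta}\|v\|_{H^t(D)}=Cq_D^{t-m}\|v\|_{H^t(D)}$. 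This is exactly the division step in the proof of Lemma~\ref{lem:scaled-bernstein}, there with $t=1$.

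Your mollifier route can be completed at $t=0$ alone. Take $\delta=\epsilon q_D$, polynomial-reproducing bumps, the eigenvalue bound, and $\|\mu\|_{L^2(D)}\le C\delta^{-d/2}\|\vec c\|_2$ from disjoint supports; interpolation then finishes. That is strictly more machinery for the same result.
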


For the interface problem, applying
\eqref{eq:single-domain-trace} separately on the subdomains would
introduce individual interface values, whereas the Nitsche penalty
controls their jump.
We instead use the interface Poincar\'e inequality Lemma~\ref{lem:interface-poincare} to control the
subdomain $L^2$ terms in the $H^1$ norms supplied by
Lemma~\ref{lem:full-range-bernstein}.
We write $\partial\boldsymbol\Omega:=
(\partial\Omega^-,\partial\Omega^+)$ and
$\|\mathbf v\|_{\mathbf L^2(\partial\boldsymbol\Omega)}^2
:=\sum_{\eta=\pm}\|v^\eta\|_{L^2(\partial\Omega^\eta)}^2$.
Then, combined with the trace inequality \cite[Theorem~1.6.6]{BrennerScott2008}, we have
the following product-space estimates.

\begin{theorem}[Trace inverse estimates]
\label{thm:trace-inverse}
Under Assumption~\ref{ass:product-kernel}, there exist constant $C>0$, such that every
$\mathbf v_h\in\mathbf V_h$ satisfies
\begin{align}
\|v_h^\eta\|_{L^2(\partial\Omega^\eta)}
&\le Ch^{-1/2}\|v_h^\eta\|_{L^2(\Omega^\eta)},
\qquad \qquad \text{for}~ \eta\in\{-,+\},
\label{eq:function-trace-inverse}\\
\|\nabla \mathbf v_h\|_{\mathbf L^2(\partial \boldsymbol \Omega)}
&\le Ch^{-1/2}\left(
\|\nabla\mathbf v_h\|_{\mathbf L^2(\boldsymbol\Omega)}
+\|\jump{\mathbf v_h}\|_{L^2(\Gamma)}
+\|v_h^+\|_{L^2(\Gamma_{\Omega^+})}
\right).
\label{eq:gradient-trace-inverse}
\end{align}
\end{theorem}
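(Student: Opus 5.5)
The plan is to combine the multiplicative trace inequality on each Lipschitz subdomain with the full-range Bernstein inequality of Lemma~\ref{lem:full-range-bernstein}, and then use the interface Poincar\'e inequality of Lemma~\ref{lem:interface-poincare} to remove the lower-order $L^2$ terms. Throughout, Assumption~\ref{ass:product-kernel} gives $q_\eta\simeq h_\eta\simeq h$, so every power $q_\eta^{-1}$ may be replaced by $Ch^{-1}$.

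For \eqref{eq:function-trace-inverse}, fix $\eta$ and apply the trace inequality \cite[Theorem~1.6.6]{BrennerScott2008} on $\Omega^\eta$:
\[
\|v_h^\eta\|_{L^2(\partial\Omega^\eta)}\le C\|v_h^\eta\|_{L^2(\Omega^\eta)}^{1/2}\|v_h^\eta\|_{H^1(\Omega^\eta)}^{1/2}.
\]
Lemma~\ref{lem:full-range-bernstein} with $(t,\alpha)=(0,1)$ gives $\|v_h^\eta\|_{H^1(\Omega^\eta)}\le Cq_\eta^{-1}\|v_h^\eta\|_{L^2(\Omega^\eta)}$. Substituting yields the factor $q_\eta^{-1/2}\le Ch^{-1/2}$, which proves \eqref{eq:function-trace-inverse}.

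For \eqref{eq:gradient-trace-inverse}, apply the same trace inequality to each component $\partial_i v_h^\eta\in H^1(\Omega^\eta)$, which is admissible because $m\ge2$. Summing over $i$ gives
\[
\|\nabla v_h^\eta\|_{L^2(\partial\Omega^\eta)}\le C\|\nabla v_h^\eta\|_{L^2(\Omega^\eta)}^{1/2}\|v_h^\eta\|_{H^2(\Omega^\eta)}^{1/2}.
\]
Lemma~\ref{lem:full-range-bernstein} with $(t,\alpha)=(1,2)$ gives $\|v_h^\eta\|_{H^2}\le Cq_\eta^{-1}\|v_h^\eta\|_{H^1}$; this low-order case is exactly why the full-range version is needed. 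Since $\|\nabla v_h^\eta\|_{L^2}\le\|v_h^\eta\|_{H^1}$, we obtain $\|\nabla v_h^\eta\|_{L^2(\partial\Omega^\eta)}\le Ch^{-1/2}\|v_h^\eta\|_{H^1(\Omega^\eta)}$.

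The main obstacle is the $L^2$ part of the $H^1$ norm. A single-domain argument would leave $\|v_h^\eta\|_{L^2(\Omega^\eta)}$, and controlling it through individual traces would require interface values that the Nitsche norm does not contain. Instead, square and sum over $\eta$, then bound $\|\mathbf v_h\|_{\mathbf L^2(\boldsymbol\Omega)}^2$ by Lemma~\ref{lem:interface-poincare}. This replaces the $L^2$ term by $\|\nabla\mathbf v_h\|^2+\|\jump{\mathbf v_h}\|_{L^2(\Gamma)}^2+\|v_h^+\|_{L^2(\Gamma_{\Omega^+})}^2$. Taking square roots and using $\sqrt{a+b+c}\le\sqrt a+\sqrt b+\sqrt c$ gives \eqref{eq:gradient-trace-inverse}.

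The constant depends only on the geometry, the kernel, $\rho_0$, and $c_h$. No factor of $h$ is lost in the Poincar\'e step, because that inequality carries no $h$ at all.
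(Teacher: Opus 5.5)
Your proposal is correct and follows essentially the same route as the paper: on each subdomain you combine the multiplicative trace inequality with Lemma~\ref{lem:full-range-bernstein}, first with $(\alpha,t)=(1,0)$ and then with $(\alpha,t)=(2,1)$, and then use Lemma~\ref{lem:interface-poincare} to absorb the remaining volume $L^2$ term. The only difference is cosmetic: you square and sum over $\eta$ explicitly before invoking the Poincar\'e inequality, where the paper simply says it sums over the subdomains.
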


\begin{proof}
We first prove \eqref{eq:function-trace-inverse}.
For $\eta\in\{-,+\}$, we have
$V_h^\eta\subset H^m(\Omega^\eta)\subset H^1(\Omega^\eta)$.
The trace inequality
\cite[Theorem~1.6.6]{BrennerScott2008} and
Lemma~\ref{lem:full-range-bernstein} with $(\alpha,t)=(1,0)$ give
\begin{align*}
\|v_h^\eta\|_{L^2(\partial\Omega^\eta)}
&\le C\|v_h^\eta\|_{L^2(\Omega^\eta)}^{1/2}
        \|v_h^\eta\|_{H^1(\Omega^\eta)}^{1/2}
\le Cq_\eta^{-1/2}\|v_h^\eta\|_{L^2(\Omega^\eta)}.
\end{align*}
Quasi-uniformity and comparable fill distances imply
$q_\eta\simeq h_\eta\simeq h$, which proves
\eqref{eq:function-trace-inverse}.
We next prove \eqref{eq:gradient-trace-inverse}.
Since $m\ge2$, we have $V_h^\eta\subset H^2(\Omega^\eta)$ and
$\nabla v_h^\eta\in[H^1(\Omega^\eta)]^d$.
Applying the trace inequality \cite[Theorem~1.6.6]{BrennerScott2008} and using
Cauchy--Schwarz inequality gives
\begin{align}
\|\nabla v_h^\eta\|_{L^2(\partial\Omega^\eta)}
&\le C\|\nabla v_h^\eta\|_{L^2(\Omega^\eta)}^{1/2}
        \|\nabla v_h^\eta\|_{H^1(\Omega^\eta)}^{1/2}
\le C\|\nabla v_h^\eta\|_{L^2(\Omega^\eta)}^{1/2}
        \|v_h^\eta\|_{H^2(\Omega^\eta)}^{1/2}.
\label{eq:grad-trace-step}
\end{align}
The full-range Bernstein inequality Lemma~\ref{lem:full-range-bernstein} with $(\alpha,t)=(2,1)$
gives
$\|v_h^\eta\|_{H^2(\Omega^\eta)}
\le Cq_\eta^{-1}\|v_h^\eta\|_{H^1(\Omega^\eta)}$.
Substituting this bound into \eqref{eq:grad-trace-step} yields
\begin{align}
\|\nabla v_h^\eta\|_{L^2(\partial\Omega^\eta)}
&\le Cq_\eta^{-1/2}
\|\nabla v_h^\eta\|_{L^2(\Omega^\eta)}^{1/2}
\|v_h^\eta\|_{H^1(\Omega^\eta)}^{1/2}\nonumber\\
&\le Ch^{-1/2}\left(
\|\nabla v_h^\eta\|_{L^2(\Omega^\eta)}
+\|v_h^\eta\|_{L^2(\Omega^\eta)}
\right).
\label{eq:local-gradient-trace}
\end{align}
Summing over the subdomains with $\eta=\pm$ gives
\begin{align*}
\|\nabla\mathbf v_h\|_{\mathbf L^2(\partial\boldsymbol\Omega)}
\le Ch^{-1/2}\left(
\|\nabla\mathbf v_h\|_{\mathbf L^2(\boldsymbol\Omega)}
+\|\mathbf v_h\|_{\mathbf L^2(\boldsymbol\Omega)}
\right).
\end{align*}
The interface Poincar\'e inequality Lemma~\ref{lem:interface-poincare} controls the last volume term.
\end{proof}

\begin{corollary}[Single-domain trace inverse estimate]
\label{cor:single-domain-trace}
Under the assumptions of Theorem~\ref{thm:trace-inverse},
let $D=\Omega^\eta$, $X_D=X^\eta$, and $h_D=h_\eta$
for $\eta\in\{-,+\}$.
There exists a constant $C>0$, such that \eqref{eq:single-domain-trace} holds.
\end{corollary}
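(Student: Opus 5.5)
The plan is to reuse the local estimate \eqref{eq:local-gradient-trace} from the proof of Theorem~\ref{thm:trace-inverse} on the single subdomain $D=\Omega^\eta$. There the only obstacle is the volume term $\|v\|_{L^2(D)}$, which we must replace by the boundary trace $\|v\|_{L^2(\partial D)}$. The product-space Poincar\'e inequality is not needed here, since the whole boundary $\partial D$ is available.

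First, for $v\in V_{X_D,\Phi_m,D}$, I combine the trace inequality \cite[Theorem~1.6.6]{BrennerScott2008} with Lemma~\ref{lem:full-range-bernstein} at $(\alpha,t)=(2,1)$. This gives
\[
\|\nabla v\|_{L^2(\partial D)}\le C q_D^{-1/2}\|\nabla v\|_{L^2(D)}^{1/2}\|v\|_{H^1(D)}^{1/2}\le C q_D^{-1/2}\bigl(\|\nabla v\|_{L^2(D)}+\|v\|_{L^2(D)}\bigr),
\]
exactly as in \eqref{eq:local-gradient-trace}.

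Second, I apply the Poincar\'e--Friedrichs inequality \eqref{eq:PF-boundary} with $\Sigma=\partial D$. This is legitimate because $D$ is a bounded connected Lipschitz domain, so $|\partial D|_{d-1}>0$. It yields $\|v\|_{L^2(D)}\le C(\|\nabla v\|_{L^2(D)}+\|v\|_{L^2(\partial D)})$ with $C$ depending only on $D$. Substituting this gives
\[
\|\nabla v\|_{L^2(\partial D)}\le Cq_D^{-1/2}\bigl(\|\nabla v\|_{L^2(D)}+\|v\|_{L^2(\partial D)}\bigr).
\]

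Third, I convert $q_D$ to $h_D$. Assumption~\ref{ass:product-kernel} gives $\rho_\eta\le\rho_0$, so $q_D^{-1/2}=\rho_D^{1/2}h_D^{-1/2}\le\rho_0^{1/2}h_D^{-1/2}$, which is \eqref{eq:single-domain-trace}.

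There is no real obstacle; the one point to check is that all constants are independent of $X_D$. This holds because the Bernstein constant is independent of the centers, the trace and Poincar\'e constants depend only on $D$, and the mesh ratio is uniformly bounded.
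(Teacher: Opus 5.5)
Your proposal is correct and follows essentially the paper's proof: the local estimate \eqref{eq:local-gradient-trace}, then the boundary Poincar\'e--Friedrichs inequality \eqref{eq:PF-boundary} with $\Sigma=\partial D$ to remove $\|v\|_{L^2(D)}$. The only cosmetic difference is that you pass from $q_D$ to $h_D$ directly through $\rho_D\le\rho_0$. The paper instead starts from the already converted factor $h^{-1/2}$ and uses $h_D\le h$.
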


\begin{proof}
By \eqref{eq:local-gradient-trace} and $h_D\le h$,
we have
$\|\nabla v\|_{L^2(\partial D)}
\le Ch_D^{-1/2}
(\|\nabla v\|_{L^2(D)}+\|v\|_{L^2(D)})$.
The boundary Poincar\'e--Friedrichs inequality
\eqref{eq:PF-boundary} with $\Sigma=\partial D$
bounds $\|v\|_{L^2(D)}$ by
$C(\|\nabla v\|_{L^2(D)}+\|v\|_{L^2(\partial D)})$
and proves \eqref{eq:single-domain-trace}.
\end{proof}

We next obtain the conormal estimate required in
Section~\ref{sec:setting}.

\begin{theorem}[Discrete conormal control]
\label{thm:conormal-trace}
Under Assumption~\ref{ass:product-kernel} and the coefficient
conditions of Subsection~\ref{subsec:interface-setting},
there exists $C_{\rm tr}>0$, such that \eqref{eq:discrete-conormal-control} holds.
\end{theorem}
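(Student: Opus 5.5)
The plan is to reduce the conormal quantities in \eqref{eq:discrete-conormal-control} to the gradient traces already controlled by Theorem~\ref{thm:trace-inverse}. Once that is done, the right-hand side of \eqref{eq:gradient-trace-inverse} is dominated by a multiple of $h^{-1/2}\|\mathbf v_h\|_h$, which gives the claim.

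First, we bound the coefficients pointwise. The tensors satisfy $A^\eta\in[W^{1,\infty}(\Omega^\eta)]^{d\times d}$, are symmetric, and obey the upper bound in \eqref{eq:uniform-ellipticity}, so $|A^\eta(\vec x)\vec\zeta|\le\alpha_1|\vec\zeta|$ a.e. Because $A^\eta$ is Lipschitz, it has a well-defined bounded trace on $\partial\Omega^\eta$, and the same bound holds there. Since $|\vec n|=1$, we get
\[
|\avg{\mathbf A\nabla\mathbf v_h}\cdot\vec n_\Gamma|\le\tfrac{\alpha_1}{2}\left(|\nabla v_h^-|+|\nabla v_h^+|\right)\ \text{on }\Gamma,
\qquad
|A^\eta\nabla v_h^\eta\cdot\vec n_{\partial\Omega}|\le\alpha_1|\nabla v_h^\eta|\ \text{on }\Gamma_{\Omega^\eta}.
\]
Here $\nabla v_h^\eta$ has an $L^2$ trace because $m\ge2$.

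Next, we integrate these bounds. The surfaces $\Gamma$ and $\Gamma_{\Omega^\eta}$ are both contained in $\partial\Omega^\eta$, so the conormal terms satisfy
\[
\|\avg{\mathbf A\nabla\mathbf v_h}\cdot\vec n_\Gamma\|_{L^2(\Gamma)}+\|\mathbf A\nabla\mathbf v_h\cdot\vec n_{\partial\Omega}\|_{\mathbf L^2(\Gamma_{\boldsymbol\Omega})}\le C\alpha_1\|\nabla\mathbf v_h\|_{\mathbf L^2(\partial\boldsymbol\Omega)}.
\]
Applying \eqref{eq:gradient-trace-inverse} to the right-hand side and multiplying by $h^{1/2}$, the left-hand side of \eqref{eq:discrete-conormal-control} is at most
\[
C\left(\|\nabla\mathbf v_h\|_{\mathbf L^2(\boldsymbol\Omega)}+\|\jump{\mathbf v_h}\|_{L^2(\Gamma)}+\|v_h^+\|_{L^2(\Gamma_{\Omega^+})}\right).
\]

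Finally, we compare with the energy norm \eqref{eq:nitsche-energy-norm}. Since $h\le h_0\le1$, we have $\|\jump{\mathbf v_h}\|_{L^2(\Gamma)}\le h^{-1/2}\|\jump{\mathbf v_h}\|_{L^2(\Gamma)}$ and likewise $\|v_h^+\|_{L^2(\Gamma_{\Omega^+})}\le h^{-1/2}\|\mathbf v_h\|_{\mathbf L^2(\Gamma_{\boldsymbol\Omega})}$. Each of the three terms is therefore bounded by $\|\mathbf v_h\|_h$, and the sum is bounded by $\sqrt3\,\|\mathbf v_h\|_h$. The resulting constant $C_{\rm tr}$ depends only on $\alpha_1$, the geometry, the kernel, and $\rho_0,c_h$. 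It does not involve $h$ or the penalty parameters, as the statement requires.

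I expect no real obstacle here. The substantive work, namely the Bernstein-based $H^2$ control and the use of the interface Poincar\'e inequality, is already contained in Theorem~\ref{thm:trace-inverse}. The only points needing care are that the coefficient traces are bounded on the boundary, which follows from the $W^{1,\infty}$ regularity or simply from the a.e. bound holding up to the boundary for Lipschitz functions, and that we use $h\le1$ when absorbing the unweighted jump and boundary terms into $\|\mathbf v_h\|_h$.
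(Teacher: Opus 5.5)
Your proposal is correct and follows essentially the same route as the paper. The paper also bounds the averaged and exterior conormal fluxes by $\sqrt2\,\alpha_1\|\nabla\mathbf v_h\|_{\mathbf L^2(\partial\boldsymbol\Omega)}$ using the upper ellipticity bound, then applies \eqref{eq:gradient-trace-inverse}, and finally absorbs the unweighted jump and exterior-trace terms into $\|\mathbf v_h\|_h$ using $h\le1$.
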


\begin{proof}
The arithmetic average and the ellipticity of diffusion tensor~\eqref{eq:uniform-ellipticity} give
\[
\left\|\avg{\mathbf A\nabla\mathbf v_h}\cdot\vec n_\Gamma
\right\|_{L^2(\Gamma)}
+\left\|\mathbf A\nabla\mathbf v_h\cdot\vec n_{\partial\Omega}
\right\|_{\mathbf L^2(\Gamma_{\boldsymbol\Omega})}
\le\sqrt{2}\,\alpha_1
\|\nabla\mathbf v_h\|_{\mathbf L^2(\partial\boldsymbol\Omega)}.
\]
By trace inverse inequality~\eqref{eq:gradient-trace-inverse},
\begin{align*}
h^{1/2}\|\nabla\mathbf v_h\|_{\mathbf L^2(\partial\boldsymbol\Omega)}
&\le C\left(
\|\nabla\mathbf v_h\|_{\mathbf L^2(\boldsymbol\Omega)}
+\|\jump{\mathbf v_h}\|_{L^2(\Gamma)}
+\|v_h^+\|_{L^2(\Gamma_{\Omega^+})}
\right)
\le C\|\mathbf v_h\|_h.
\end{align*}
The last inequality follows from $h\le1$ and the definition
\eqref{eq:nitsche-energy-norm}.
Combining the two bounds proves
\eqref{eq:discrete-conormal-control}.
\end{proof}

\subsection{Interpolation error estimates under Nitsche norms}
\label{subsec:kernel-approximation}
The discrete conormal estimate applies to trial functions,
whereas approximation errors generally lie outside $\mathbf V_h$.
We control their boundary and interface fluxes using the augmented
Nitsche norm, whose weights complement those of the value terms
in \eqref{eq:nitsche-bilinear}.
Their complementary weights motivate the augmented Nitsche norm
\cite{BeckerHansboStenberg2003}.
We include these flux contributions in the augmented Nitsche norm
\begin{equation}
\begin{aligned}
\|\mathbf w\|_{h,*}^2
:={}&\|\mathbf w\|_h^2
+h\|\avg{\mathbf A\nabla\mathbf w}\cdot\vec n_\Gamma\|_{L^2(\Gamma)}^2
+h\|\mathbf A\nabla\mathbf w\cdot\vec n_{\partial\Omega}
\|_{\mathbf L^2(\Gamma_{\boldsymbol\Omega})}^2,
\hspace{1mm} \mathbf w\in\mathbf H^2(\boldsymbol\Omega). \hspace{-3mm}
\end{aligned}
\label{eq:augmented-norm}
\end{equation}

The approximation argument should apply throughout the range
$2\le s\le m$ without imposing the additional restriction $s>d/2$.
Direct pointwise interpolation of an arbitrary $H^s(D)$ function
would require continuous point evaluation, which is not available
when $s\le d/2$.
The error analysis, however, only requires a comparison
function in the trial space with suitable approximation.
We therefore smooth each component before kernel interpolation.
This provides a single construction for both the primal and
adjoint solutions without requiring point values of the
original target.

To define the smoothing by a Fourier cutoff, we extend
each component to $\mathbb R^d$ while preserving its Sobolev
regularity.
For $D=\Omega^\eta$, fix a linear extension operator $E_D$
that is bounded from $H^t(D)$ to $H^t(\mathbb R^d)$ for
$0\le t\le m$ \cite{Rychkov1999}.
Choose a real-valued radial function
$\chi\in C_0^\infty(\mathbb R^d)$ satisfying
$\chi=1$ on $B_1(0)$ and $\chi=0$ outside $B_2(0)$.
Define
\begin{equation}
S_h^Dw
:=\left.\mathcal F^{-1}
\bigl[\chi(h\,\cdot)\widehat{E_Dw}\bigr]\right|_D,
\qquad
(\Pi_h\mathbf w)^\eta
:=I_{X^\eta}S_h^{\Omega^\eta}w^\eta,
\quad \eta\in\{-,+\}.
\label{eq:regularized-interpolant}
\end{equation}
Since $S_h^Dw\in H^m(D)$ and $m>d/2$, its point values and
kernel interpolant are well defined.
Thus $\Pi_h$ maps $\mathbf H^s(\boldsymbol\Omega)$ into
$\mathbf V_h$ for every $2\le s\le m$.
It interpolates the smoothed components and serves only as
a comparison operator in the error analysis.

\begin{lemma}[Interpolation error estimates under Nitsche norms]
\label{lem:kernel-approximation}
Under Assumption~\ref{ass:product-kernel} and the coefficient
assumptions of Subsection~\ref{subsec:interface-setting},
let $2\le s\le m$.
Then every $\mathbf w\in\mathbf H^s(\boldsymbol\Omega)$ satisfies
\begin{equation}
\|\mathbf w-\Pi_h\mathbf w\|_h
\le\|\mathbf w-\Pi_h\mathbf w\|_{h,*}
\le Ch^{s-1}\|\mathbf w\|_{\mathbf H^s(\boldsymbol\Omega)}.
\label{eq:kernel-nitsche-approximation}
\end{equation}
\end{lemma}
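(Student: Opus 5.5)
The first inequality in \eqref{eq:kernel-nitsche-approximation} is immediate from \eqref{eq:augmented-norm}. For the second, set $\mathbf e:=\mathbf w-\Pi_h\mathbf w$ and split it componentwise as
\[
e^\eta=(w^\eta-S_h^{\Omega^\eta}w^\eta)+(S_h^{\Omega^\eta}w^\eta-I_{X^\eta}S_h^{\Omega^\eta}w^\eta)=:e_1^\eta+e_2^\eta .
\]
The plan is to bound every term of $\|\cdot\|_{h,*}$ by volume Sobolev norms of the pieces. Both pieces lie in $H^2(\Omega^\eta)$, so the multiplicative trace inequality \cite[Theorem~1.6.6]{BrennerScott2008} applies to $e^\eta$ and to $\nabla e^\eta$. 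Together with $\|\jump{\mathbf e}\|_{L^2(\Gamma)}\le\sum_\eta\|e^\eta\|_{L^2(\partial\Omega^\eta)}$ and $|\mathbf A\nabla e^\eta\cdot\vec n|\le\alpha_1|\nabla e^\eta|$, this gives
\[
\|\mathbf e\|_{h,*}^2\le C\sum_{\eta}\Bigl(h^{-1}\|e^\eta\|_{L^2}\|e^\eta\|_{H^1}+\|e^\eta\|_{H^1}^2+h\|\nabla e^\eta\|_{L^2}\|e^\eta\|_{H^2}\Bigr).
\]
It therefore suffices to show $\|e_i^\eta\|_{H^j(\Omega^\eta)}\le Ch^{s-j}\|w^\eta\|_{H^s(\Omega^\eta)}$ for $j=0,1,2$ and $i=1,2$. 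Each product above then scales like $h^{2(s-1)}$, since $h^{-1}h^{s}h^{s-1}=h\,h^{s-1}h^{s-2}=h^{2s-2}$.

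\emph{Smoothing error.} For $e_1$, use that $e_1^\eta$ is the restriction of $\mathcal F^{-1}[(1-\chi(h\cdot))\widehat{E_Dw}]$, whose symbol vanishes for $|\vec\xi|<1/h$. Hence
\[
\|e_1^\eta\|_{H^j(D)}^2\le\int_{|\vec\xi|\ge1/h}(1+|\vec\xi|^2)^j|\widehat{E_Dw}|^2\le Ch^{2(s-j)}\|E_Dw\|_{H^s(\mathbb R^d)}^2,
\]
and boundedness of $E_D$ gives the claim. The same Fourier argument, using $\chi(h\vec\xi)=0$ for $|\vec\xi|>2/h$, yields the stability bounds $\|S_h^Dw\|_{H^\sigma(D)}\le Ch^{s-\sigma}\|w\|_{H^s(D)}$ for $\sigma\ge s$ and $\|S_h^Dw\|_{H^s(D)}\le C\|w\|_{H^s(D)}$.

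\emph{Interpolation error.} This is the step I expect to be the main obstacle. The estimate \eqref{eq:kernel-interpolation} requires the target regularity index to exceed $d/2$, but $s$ may satisfy $s\le d/2$. The remedy is to apply \eqref{eq:kernel-interpolation} to $S_h^Dw\in H^m(D)$ with regularity index $m>d/2$:
\[
\|e_2^\eta\|_{H^j}\le Ch^{m-j}\|S_h^Dw\|_{H^m}\le Ch^{m-j}h^{s-m}\|w\|_{H^s}=Ch^{s-j}\|w\|_{H^s},
\]
where the middle step uses the high-frequency stability bound above and $\rho_\eta\le\rho_0$, $h_\eta\le h$. When $m$ is not an integer, I check that $j\le 2\le m$ is permitted in \eqref{eq:kernel-interpolation}; if the endpoint restriction is an issue, I apply it with an integer index in $[\max(2,\lfloor d/2\rfloor+1),m]$ instead. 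A possible subtlety is whether the constants in \eqref{eq:kernel-interpolation} remain uniform for this choice of index, which I expect from the cited result with fixed $m$ and $h\le h_0$.

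Combining the bounds for $e_1$ and $e_2$ with the trace inequality above and summing over $\eta=\pm$ gives $\|\mathbf e\|_{h,*}\le Ch^{s-1}\|\mathbf w\|_{\mathbf H^s(\boldsymbol\Omega)}$.
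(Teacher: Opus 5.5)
Your proposal is correct and follows essentially the same route as the paper. You split off the Fourier-cutoff smoothing error and apply \eqref{eq:kernel-interpolation} to $S_h^{\Omega^\eta}w^\eta$ at the full index $m>d/2$, together with $\|S_h^Dw\|_{H^m(D)}\le Ch^{s-m}\|w\|_{H^s(D)}$, to get $H^j$ errors of order $h^{s-j}$ for $j=0,1,2$, then convert these into the terms of $\|\cdot\|_{h,*}$ via the multiplicative trace inequality.

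Your integer-index fallback is unnecessary, since \eqref{eq:kernel-interpolation} at index $m$ already allows $j\le m$ and $m\ge2$. It would also lose part of the rate, giving only $h^{k-j}$, whenever the chosen integer $k$ lies below $s$.
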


\begin{proof}
For $w\in H^s(D)$ and $0<h\le1$, the support properties of
$\chi$ give
\begin{align*}
(1+|\vec{\xi}|^2)^{j/2}|1-\chi(h\vec{\xi})|
&\le Ch^{s-j}(1+|\vec{\xi}|^2)^{s/2},
\qquad j=0,1,2,
\\
(1+|\vec{\xi} |^2)^{m/2}|\chi(h\vec{\xi})|
&\le Ch^{s-m}(1+|\vec{\xi}|^2)^{s/2}.
\end{align*}
Plancherel's identity and the boundedness of $E_D$ therefore yield
\begin{align}
\|w-S_h^Dw\|_{H^j(D)}
&\le Ch^{s-j}\|w\|_{H^s(D)},
\qquad j=0,1,2,
\label{eq:smoothing-error}\\
\|S_h^Dw\|_{H^m(D)}
&\le Ch^{s-m}\|w\|_{H^s(D)}.
\label{eq:smoothing-hm}
\end{align}
For each $\eta\in\{-,+\}$,
applying \eqref{eq:kernel-interpolation} to
$S_h^{\Omega^\eta}w^\eta\in H^m(\Omega^\eta)$ gives
\begin{align*}
\|w^\eta-(\Pi_h\mathbf w)^\eta\|_{H^j(\Omega^\eta)}
&\le
\|w^\eta-S_h^{\Omega^\eta}w^\eta\|_{H^j(\Omega^\eta)}
+C h_\eta^{m-j}
\|S_h^{\Omega^\eta}w^\eta\|_{H^m(\Omega^\eta)}
\\
&\le
C\bigl(h^{s-j}+h_\eta^{m-j}h^{s-m}\bigr)
\|w^\eta\|_{H^s(\Omega^\eta)},
\qquad j=0,1,2.
\end{align*}
Since $h_\eta\le h$ and $m-j\ge0$, we obtain
\begin{equation}
\|w^\eta-(\Pi_h\mathbf w)^\eta\|_{H^j(\Omega^\eta)}
\le Ch^{s-j}\|w^\eta\|_{H^s(\Omega^\eta)},
\qquad j=0,1,2.
\label{eq:regularized-Hj}
\end{equation}
The case $j=1$ controls the volume term in the energy norm.
For the value traces, the trace inequality
\cite[Theorem~1.6.6]{BrennerScott2008} and
\eqref{eq:regularized-Hj} with $j=0,1$ give
\begin{align*}
h^{-1/2}\|w^\eta-(\Pi_h\mathbf w)^\eta\|_{L^2(\partial\Omega^\eta)}
&\le
Ch^{-1/2}
\|w^\eta-(\Pi_h\mathbf w)^\eta\|_{L^2(\Omega^\eta)}^{1/2}
\|w^\eta-(\Pi_h\mathbf w)^\eta\|_{H^1(\Omega^\eta)}^{1/2}
\\
&\le
Ch^{s-1}\|w^\eta\|_{H^s(\Omega^\eta)}.
\end{align*}
This bounds the interface jump and exterior boundary terms
in $\|\mathbf w-\Pi_h\mathbf w\|_h$.
Applying the same trace inequality to the gradient and using
\eqref{eq:regularized-Hj} with $j=1,2$ yields
\begin{align*}
h^{1/2}\|\nabla (w^\eta-(\Pi_h\mathbf w)^\eta)\|_{L^2(\partial\Omega^\eta)}
&\le
Ch^{1/2}
\|w^\eta-(\Pi_h\mathbf w)^\eta\|_{H^1(\Omega^\eta)}^{1/2}
\|w^\eta-(\Pi_h\mathbf w)^\eta\|_{H^2(\Omega^\eta)}^{1/2}
\\
&\le
Ch^{s-1}\|w^\eta\|_{H^s(\Omega^\eta)}.
\end{align*}
The coefficient bound \eqref{eq:uniform-ellipticity} then controls
the conormal terms in \eqref{eq:augmented-norm}.
Combining these estimates over the two subdomains proves
\eqref{eq:kernel-nitsche-approximation}.
\end{proof}

The same construction applies to the homogeneous adjoint solution
under the piecewise $H^2$ regularity assumption
\eqref{eq:adjoint-regularity}, without requiring $2>d/2$.
Taking $s=2$ in Lemma~\ref{lem:kernel-approximation} gives
\begin{equation}
\|\mathbf z-\Pi_h\mathbf z\|_{h,*}
\le Ch\|\mathbf z\|_{\mathbf H^2(\boldsymbol\Omega)}
\le CC_{\rm reg}h\|\mathbf r\|_{\mathbf L^2(\boldsymbol\Omega)}.
\label{eq:adjoint-approximation}
\end{equation}
This supplies the additional factor $h$ in the duality argument
for Theorem~\ref{thm:L2-error}.

\section{Well-posedness, error analysis, and conditioning}
\label{sec:analysis}
The conormal and approximation estimates of Section~\ref{sec:trace-inverse} provide uniform flux control on the
trial space and approximation in the norms required by the
bilinear form. We first use these properties to establish
well-posedness and error bounds. We then
analyze the separate question of conditioning in the
kernel-translate basis, with explicit scale dependence.

\subsection{Well-posedness}
\label{subsec:stability}
The discrete conormal estimate
\eqref{eq:discrete-conormal-control} supplies the trace control
identified in Subsection~\ref{subsec:stability-requirement}.
It allows the flux terms to be absorbed by the diffusion and
penalty terms with sufficiently large $h$-independent penalty parameters.

\begin{theorem}[Well-posedness]
\label{thm:well-posedness}
Under Assumption~\ref{ass:product-kernel} and the coefficient
assumptions of Subsection~\ref{subsec:interface-setting},
there exists a constant $C_b>0$, such that
\begin{equation}
|a_h(\mathbf w_h,\mathbf v_h)|
\le C_b\|\mathbf w_h\|_h\|\mathbf v_h\|_h,
\qquad \mathbf w_h,\mathbf v_h\in\mathbf V_h.
\label{eq:discrete-continuity}
\end{equation}
Let
$\gamma_{\min}:=
\min\{\gamma_\Gamma,\gamma_{\partial,-},\gamma_{\partial,+}\}$.
If
\begin{equation}
\gamma_{\min}\ge\alpha_0+\frac{2C_{\rm tr}^2}{\alpha_0},
\label{eq:penalty-threshold}
\end{equation}
where $C_{\rm tr}$ is the constant in
Theorem~\ref{thm:conormal-trace}, then
\begin{equation}
a_h(\mathbf v_h,\mathbf v_h)
\ge c_s\|\mathbf v_h\|_h^2,
\qquad \mathbf v_h\in\mathbf V_h,
\qquad c_s=\alpha_0/2.
\label{eq:coercivity}
\end{equation}
Consequently, \eqref{eq:discrete-nitsche} has a unique solution,
and $K_h$ is symmetric positive definite.
\end{theorem}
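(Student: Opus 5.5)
The proof treats each term of $a_h$ separately, using only Cauchy--Schwarz, the uniform ellipticity bounds \eqref{eq:uniform-ellipticity}, and the discrete conormal control \eqref{eq:discrete-conormal-control} from Theorem~\ref{thm:conormal-trace}.

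\emph{Continuity.} The diffusion term satisfies $|(\mathbf A\nabla\mathbf w_h,\nabla\mathbf v_h)|\le\alpha_1\|\nabla\mathbf w_h\|\,\|\nabla\mathbf v_h\|$. Each penalty term is at most $\gamma_{\max}h^{-1}\|\cdot\|\,\|\cdot\|$, with $\gamma_{\max}$ the largest penalty parameter. For each flux term we insert $h^{1/2}h^{-1/2}$, for example
\[
|\langle\avg{\mathbf A\nabla\mathbf w_h}\cdot\vec n_\Gamma,\jump{\mathbf v_h}\rangle_{L^2(\Gamma)}|
\le h^{1/2}\|\avg{\mathbf A\nabla\mathbf w_h}\cdot\vec n_\Gamma\|_{L^2(\Gamma)}\;h^{-1/2}\|\jump{\mathbf v_h}\|_{L^2(\Gamma)}
\le C_{\rm tr}\|\mathbf w_h\|_h\|\mathbf v_h\|_h .
\]
The boundary flux terms are handled the same way. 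Summing all contributions gives \eqref{eq:discrete-continuity} with $C_b$ depending on $\alpha_1$, $C_{\rm tr}$ and the penalty parameters, but not on $h$.

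\emph{Coercivity.} Set $\mathbf w_h=\mathbf v_h$ and write
\[
X:=\|\nabla\mathbf v_h\|^2,\qquad Y:=h^{-1}\|\jump{\mathbf v_h}\|_{L^2(\Gamma)}^2+h^{-1}\|\mathbf v_h\|_{\mathbf L^2(\Gamma_{\boldsymbol\Omega})}^2 .
\]
Then the diffusion and penalty terms give
\[
a_h(\mathbf v_h,\mathbf v_h)\ge\alpha_0X+\gamma_{\min}Y-2F,
\]
where
\[
F:=|\langle\avg{\mathbf A\nabla\mathbf v_h}\cdot\vec n_\Gamma,\jump{\mathbf v_h}\rangle|+|\langle\mathbf A\nabla\mathbf v_h\cdot\vec n_{\partial\Omega},\mathbf v_h\rangle|.
\]
Cauchy--Schwarz, applied first on each surface and then to the pair of sums, combined with \eqref{eq:discrete-conormal-control}, gives
\[
F\le\bigl(h^{1/2}\|\avg{\cdot}\|+h^{1/2}\|\cdot\|\bigr)\,Y^{1/2}\le C_{\rm tr}(X+Y)^{1/2}Y^{1/2}.
\]
Young's inequality with weight $\alpha_0/4$ yields
\[
2F\le\tfrac{\alpha_0}{2}(X+Y)+\tfrac{2C_{\rm tr}^2}{\alpha_0}Y .
\]
Therefore
\[
a_h(\mathbf v_h,\mathbf v_h)\ge\tfrac{\alpha_0}{2}X+\Bigl(\gamma_{\min}-\tfrac{\alpha_0}{2}-\tfrac{2C_{\rm tr}^2}{\alpha_0}\Bigr)Y .
\]
Under \eqref{eq:penalty-threshold} the coefficient of $Y$ is at least $\alpha_0/2$, which gives \eqref{eq:coercivity} with $c_s=\alpha_0/2$.

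\emph{Consequences.} The problem \eqref{eq:discrete-nitsche} is square and finite-dimensional. By coercivity, $a_h(\mathbf v_h,\cdot)=0$ forces $\|\mathbf v_h\|_h=0$. The interface Poincar\'e inequality (Lemma~\ref{lem:interface-poincare}) shows that $\|\cdot\|_h$ is a norm on $\mathbf V_h\subset\mathbf H^1(\boldsymbol\Omega)$, so $\mathbf v_h=0$; this gives uniqueness and hence existence. The stiffness matrix $K_h$ is symmetric because $a_h$ is symmetric. For $\vec c\ne0$, $\vec c^\top K_h\vec c=a_h(\mathbf v,\mathbf v)>0$ with $\mathbf v=\sum_j c_j\boldsymbol\varphi_j\neq0$, since $\{\boldsymbol\varphi_i\}$ is a basis of $\mathbf V_h$; so $K_h$ is positive definite.

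The only delicate point is the constant bookkeeping in the Young step. The absorption into the gradient term must leave exactly $\alpha_0/2$ while matching the threshold $\alpha_0+2C_{\rm tr}^2/\alpha_0$. This works because $C_{\rm tr}$ bounds the combined interface and boundary flux norm, so a single Young inequality suffices for both flux terms.
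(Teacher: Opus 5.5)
Your proposal is correct and follows the paper's proof essentially step for step. Continuity comes from termwise Cauchy--Schwarz with \eqref{eq:discrete-conormal-control}. For coercivity, both flux terms are bounded together by $2C_{\rm tr}\|\mathbf v_h\|_h$ times the square root of the weighted jump and boundary terms, and then a single Young inequality with weight $\alpha_0/2$ leaves exactly the paper's constant $C_0=\alpha_0/2+2C_{\rm tr}^2/\alpha_0$. Your explicit use of Lemma~\ref{lem:interface-poincare} to confirm that $\|\cdot\|_h$ is a norm on $\mathbf V_h$ fills in a step the paper leaves implicit when it concludes uniqueness and positive definiteness of $K_h$.
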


\begin{proof}
We first prove continuity.
The upper ellipticity bound gives
\[
|(\mathbf A\nabla\mathbf w_h,\nabla\mathbf v_h)_{\mathbf L^2(\boldsymbol\Omega)}|
\le\alpha_1
\|\nabla\mathbf w_h\|_{\mathbf L^2(\boldsymbol\Omega)}
\|\nabla\mathbf v_h\|_{\mathbf L^2(\boldsymbol\Omega)}.
\]
For the interface flux term, Cauchy--Schwarz and
\eqref{eq:discrete-conormal-control} give
\begin{align*}
\left|
\left\langle
\avg{\mathbf A\nabla\mathbf w_h}\cdot\vec n_\Gamma,
\jump{\mathbf v_h}
\right\rangle_{L^2(\Gamma)}
\right|
&\le
\left(h^{1/2}
\|\avg{\mathbf A\nabla\mathbf w_h}\cdot\vec n_\Gamma\|_{L^2(\Gamma)}\right)
\left(h^{-1/2}\|\jump{\mathbf v_h}\|_{L^2(\Gamma)}\right)
\\
&\le C_{\rm tr}\|\mathbf w_h\|_h\|\mathbf v_h\|_h.
\end{align*}
The exterior boundary flux term is estimated in the same way.
Interchanging $\mathbf w_h$ and $\mathbf v_h$ bounds the
corresponding symmetric flux terms.
The interface Nitsche term satisfies
\begin{align*}
\frac{\gamma_\Gamma}{h}
|\langle\jump{\mathbf w_h},\jump{\mathbf v_h}\rangle_{L^2(\Gamma)}|
&\le\gamma_\Gamma \left( h^{-1/2}\|\jump{\mathbf w_h}\|_{L^2(\Gamma)}\right) \left(h^{-1/2}\|\jump{\mathbf v_h}\|_{L^2(\Gamma)}\right) \le \gamma_\Gamma\|\mathbf w_h\|_h\|\mathbf v_h\|_h.
\end{align*}
The exterior boundary penalty is bounded in the same way.
Combining these estimates proves \eqref{eq:discrete-continuity}.\hfill\break
To prove coercivity, take $\mathbf w_h=\mathbf v_h$ in
\eqref{eq:nitsche-bilinear}, we have
\begin{align}
a_h(\mathbf v_h,\mathbf v_h)
={}&
(\mathbf A\nabla\mathbf v_h,\nabla\mathbf v_h)_{\mathbf L^2(\boldsymbol\Omega)}
-2\left\langle
\avg{\mathbf A\nabla\mathbf v_h}\cdot\vec n_\Gamma,
\jump{\mathbf v_h}
\right\rangle_{L^2(\Gamma)} +\frac{\gamma_\Gamma}{h}\|\jump{\mathbf v_h}\|_{L^2(\Gamma)}^2
\nonumber\\
&-2\left\langle
\mathbf A\nabla\mathbf v_h\cdot\vec n_{\partial\Omega},
\mathbf v_h
\right\rangle_{\mathbf L^2(\Gamma_{\boldsymbol\Omega})}
+\frac1h
\langle\boldsymbol\gamma_\partial\mathbf v_h,\mathbf v_h
\rangle_{\mathbf L^2(\Gamma_{\boldsymbol\Omega})}.
\label{eq:coercivity-expansion}
\end{align}
Uniform ellipticity bounds the standard Galerkin term $(\mathbf A\nabla\mathbf v_h,\nabla\mathbf v_h)_{\mathbf L^2(\boldsymbol\Omega)}$ by
$\alpha_0\|\nabla\mathbf v_h\|_{\mathbf L^2(\boldsymbol\Omega)}^2$.
For the two flux contributions, Cauchy--Schwarz and
\eqref{eq:discrete-conormal-control} yield
\begin{align*}
2\left|
\left\langle
\avg{\mathbf A\nabla\mathbf v_h}\cdot\vec n_\Gamma,
\jump{\mathbf v_h}
\right\rangle_{L^2(\Gamma)}
\right|
&+2\left|
\left\langle
\mathbf A\nabla\mathbf v_h\cdot\vec n_{\partial\Omega},
\mathbf v_h
\right\rangle_{\mathbf L^2(\Gamma_{\boldsymbol\Omega})}
\right|
\\
&\le
2C_{\rm tr}\|\mathbf v_h\|_h
\left[
h^{-1}\left(
\|\jump{\mathbf v_h}\|_{L^2(\Gamma)}^2
+\|\mathbf v_h\|_{\mathbf L^2(\Gamma_{\boldsymbol\Omega})}^2
\right)
\right]^{1/2}.
\end{align*}
For any $\vartheta>0$, Young's inequality and
\eqref{eq:nitsche-energy-norm} give
\begin{align*}
&2C_{\rm tr}\|\mathbf v_h\|_h
\left[
h^{-1}\left(
\|\jump{\mathbf v_h}\|_{L^2(\Gamma)}^2
+\|\mathbf v_h\|_{\mathbf L^2(\Gamma_{\boldsymbol\Omega})}^2
\right)
\right]^{1/2}
\\
&\quad\le
\vartheta\|\nabla\mathbf v_h\|_{\mathbf L^2(\boldsymbol\Omega)}^2
+\left(\vartheta+\frac{C_{\rm tr}^2}{\vartheta}\right)
h^{-1}\left(
\|\jump{\mathbf v_h}\|_{L^2(\Gamma)}^2
+\|\mathbf v_h\|_{\mathbf L^2(\Gamma_{\boldsymbol\Omega})}^2
\right).
\end{align*}
Choose $\vartheta=\alpha_0/2$ and set
$C_0:=\alpha_0/2+2C_{\rm tr}^2/\alpha_0$.
Substitution into \eqref{eq:coercivity-expansion} gives
\begin{align}
a_h(\mathbf v_h,\mathbf v_h)
\ge{}&
\frac{\alpha_0}{2}
\|\nabla\mathbf v_h\|_{\mathbf L^2(\boldsymbol\Omega)}^2
+(\gamma_\Gamma-C_0)h^{-1}
\|\jump{\mathbf v_h}\|_{L^2(\Gamma)}^2
\nonumber\\
&+
(\gamma_{\partial,\eta}-C_0)h^{-1}
\|\mathbf v_h\|_{\mathbf L^2(\Gamma_{\boldsymbol \Omega})}^2.
\label{eq:coercivity-lower-bound}
\end{align}
Under \eqref{eq:penalty-threshold}, each penalty coefficient
in this bound is at least $\alpha_0/2$.
This proves \eqref{eq:coercivity}.\hfill\break
Since $a_h$ is symmetric and coercive on the finite-dimensional
space $\mathbf V_h$, its stiffness matrix is symmetric positive
definite, and \eqref{eq:discrete-nitsche} has a unique solution.
\end{proof}

For the error analysis, we also require continuity for arguments
outside the trial space.
The same termwise estimates, with each flux controlled by the
augmented norm \eqref{eq:augmented-norm}, give
\begin{equation}
|a_h(\mathbf w,\mathbf v)|
\le C_b\|\mathbf w\|_{h,*}\|\mathbf v\|_{h,*},
\qquad \mathbf w,\mathbf v\in\mathbf H^2(\boldsymbol\Omega),
\label{eq:augmented-continuity}
\end{equation}
after increasing $C_b$ if necessary.
Theorem~\ref{thm:conormal-trace} also gives
\begin{equation}
\|\mathbf v_h\|_h\le\|\mathbf v_h\|_{h,*}
\le C_N\|\mathbf v_h\|_h,
\qquad \mathbf v_h\in\mathbf V_h,
\qquad C_N=(1+C_{\rm tr}^2)^{1/2}.
\label{eq:discrete-norm-equivalence}
\end{equation}
Combining these bounds yields
\begin{equation}
|a_h(\mathbf w,\mathbf v_h)|
\le C_bC_N\|\mathbf w\|_{h,*}\|\mathbf v_h\|_h,
\qquad
\mathbf w\in\mathbf H^2(\boldsymbol\Omega),\quad
\mathbf v_h\in\mathbf V_h.
\label{eq:mixed-nitsche-continuity}
\end{equation}

\subsection{A priori error estimates}
\label{subsec:error-analysis}

Coercivity and Galerkin orthogonality reduce the energy error to
approximation in the augmented Nitsche norm.
The approximation estimates of
Subsection~\ref{subsec:kernel-approximation} then give convergence
rates determined by the piecewise solution regularity.

\begin{theorem}[Energy-norm error estimates]
\label{thm:energy-error}
Under Assumption~\ref{ass:product-kernel}, the coefficient
assumptions of Subsection~\ref{subsec:interface-setting}, and
the penalty condition \eqref{eq:penalty-threshold},
let  $\mathbf u\in\mathbf H^s(\boldsymbol\Omega)$ with
$2\le s\le m$ solve
\eqref{eq:intro-interface-problem}.
Then the discrete solution $\mathbf u_h$ satisfies
\begin{align}
\|\mathbf u-\mathbf u_h\|_h
&\le \|\mathbf u-\mathbf u_h\|_{h,*} \le Ch^{s-1}\|\mathbf u\|_{\mathbf H^s(\boldsymbol\Omega)}.
\label{eq:energy-error}
\end{align}
\end{theorem}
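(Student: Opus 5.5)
The argument is a Céa-type estimate in the Nitsche norm. We compare $\mathbf u_h$ with the regularized interpolant $\Pi_h\mathbf u\in\mathbf V_h$ from \eqref{eq:regularized-interpolant} and split
\[
\mathbf u-\mathbf u_h=(\mathbf u-\Pi_h\mathbf u)+(\Pi_h\mathbf u-\mathbf u_h)=:\boldsymbol\eta+\boldsymbol\xi_h,
\qquad \boldsymbol\xi_h\in\mathbf V_h .
\]
The first inequality in \eqref{eq:energy-error} is immediate, since $\|\cdot\|_{h,*}$ only adds nonnegative terms to $\|\cdot\|_h$. It also holds for $\mathbf u-\mathbf u_h\in\mathbf H^2(\boldsymbol\Omega)$, because $s\ge2$ and $\mathbf V_h\subset\mathbf H^m(\boldsymbol\Omega)$ with $m\ge2$.

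The regularity $s\ge2>3/2$ makes Lemma~\ref{lem:consistency} applicable with $\delta=s-1$, so Galerkin orthogonality \eqref{eq:galerkin-orthogonality} holds. Under \eqref{eq:penalty-threshold}, the coercivity \eqref{eq:coercivity} of Theorem~\ref{thm:well-posedness} gives
\[
c_s\|\boldsymbol\xi_h\|_h^2\le a_h(\boldsymbol\xi_h,\boldsymbol\xi_h)
= a_h(\mathbf u-\mathbf u_h,\boldsymbol\xi_h)-a_h(\boldsymbol\eta,\boldsymbol\xi_h)
=-a_h(\boldsymbol\eta,\boldsymbol\xi_h).
\]
The mixed continuity \eqref{eq:mixed-nitsche-continuity} bounds the right-hand side by $C_bC_N\|\boldsymbol\eta\|_{h,*}\|\boldsymbol\xi_h\|_h$. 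Dividing through gives $\|\boldsymbol\xi_h\|_h\le (C_bC_N/c_s)\|\boldsymbol\eta\|_{h,*}$.

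Next we pass from $\|\cdot\|_h$ to $\|\cdot\|_{h,*}$ for $\boldsymbol\xi_h$ using the discrete norm equivalence \eqref{eq:discrete-norm-equivalence}, which gives $\|\boldsymbol\xi_h\|_{h,*}\le C_N\|\boldsymbol\xi_h\|_h$. The triangle inequality in the augmented norm then yields
\[
\|\mathbf u-\mathbf u_h\|_{h,*}\le\bigl(1+C_N^2C_b/c_s\bigr)\|\mathbf u-\Pi_h\mathbf u\|_{h,*}.
\]
Finally, Lemma~\ref{lem:kernel-approximation} with $\mathbf w=\mathbf u$ bounds the right-hand side by $Ch^{s-1}\|\mathbf u\|_{\mathbf H^s(\boldsymbol\Omega)}$, which is \eqref{eq:energy-error}.

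No step is a genuine obstacle. The point needing care is the control of the flux part of the error in the augmented norm. For the trial-space component $\boldsymbol\xi_h$ this control comes from Theorem~\ref{thm:conormal-trace}, entering through $C_N$. For the interpolation error $\boldsymbol\eta$, which lies outside $\mathbf V_h$, it comes from the $H^2$ estimate in Lemma~\ref{lem:kernel-approximation}. With both in place the argument is routine, and all constants are independent of $h$.
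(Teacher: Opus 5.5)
Your proposal is correct and follows the paper's proof: coercivity and Galerkin orthogonality applied to the discrete difference, then the mixed continuity bound \eqref{eq:mixed-nitsche-continuity}, the discrete norm equivalence \eqref{eq:discrete-norm-equivalence}, the triangle inequality, and finally Lemma~\ref{lem:kernel-approximation}, giving the same constant $1+C_bC_N^2/c_s$. The only cosmetic difference is that you fix $\mathbf v_h=\Pi_h\mathbf u$ from the start, whereas the paper first proves the quasi-optimal bound for an arbitrary $\mathbf v_h\in\mathbf V_h$ and then specializes.
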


\begin{proof}
For any $\mathbf v_h\in\mathbf V_h$, coercivity
\eqref{eq:coercivity} and Galerkin orthogonality
\eqref{eq:galerkin-orthogonality} give
\begin{equation}
c_s\|\mathbf u_h-\mathbf v_h\|_h^2
\le a_h(\mathbf u_h-\mathbf v_h,\mathbf u_h-\mathbf v_h)
=a_h(\mathbf u-\mathbf v_h,\mathbf u_h-\mathbf v_h).
\label{eq:energy-cea-start}
\end{equation}
The mixed continuity estimate \eqref{eq:mixed-nitsche-continuity}
therefore yields
$\|\mathbf u_h-\mathbf v_h\|_h
\le\frac{C_bC_N}{c_s}\|\mathbf u-\mathbf v_h\|_{h,*}$.
The triangle inequality and the discrete norm equivalence
\eqref{eq:discrete-norm-equivalence} imply
\begin{equation}
\begin{aligned}
\|\mathbf u-\mathbf u_h\|_{h,*}
&\le\|\mathbf u-\mathbf v_h\|_{h,*}
+C_N\|\mathbf u_h-\mathbf v_h\|_h
\le\left(1+\frac{C_bC_N^2}{c_s}\right)
\|\mathbf u-\mathbf v_h\|_{h,*}.
\end{aligned}
\label{eq:energy-best-approximation}
\end{equation}
Taking $\mathbf v_h=\Pi_h\mathbf u$ and applying the
Nitsche-norm approximation estimate
Lemma~\ref{lem:kernel-approximation} proves the augmented-norm
bound in \eqref{eq:energy-error}.
The energy-norm bound follows from
$\|\cdot\|_h\le\|\cdot\|_{h,*}$.
\end{proof}

The $L^2$ estimate follows by duality, using the adjoint
approximation bound \eqref{eq:adjoint-approximation}.

\begin{theorem}[$L^2$ error estimate]
\label{thm:L2-error}
Under the assumptions of Theorem~\ref{thm:energy-error}, suppose additionally
that the adjoint regularity estimate
\eqref{eq:adjoint-regularity} holds. Let  $\mathbf u\in\mathbf H^s(\boldsymbol\Omega)$ with
$2\le s\le m$ solve
\eqref{eq:intro-interface-problem}.
Then 
\begin{equation}
\|\mathbf u-\mathbf u_h\|_{\mathbf L^2(\boldsymbol\Omega)}
\le Ch^s\|\mathbf u\|_{\mathbf H^s(\boldsymbol\Omega)}.
\label{eq:L2-error}
\end{equation}
\end{theorem}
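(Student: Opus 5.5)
We argue by the standard Aubin--Nitsche duality argument, adapted to the Nitsche form. Set $\mathbf e:=\mathbf u-\mathbf u_h$ and take $\mathbf r:=\mathbf e\in\mathbf L^2(\boldsymbol\Omega)$ in the homogeneous adjoint problem \eqref{eq:adjoint-interface-problem}. Let $\mathbf z\in\mathbf H_0^1(\boldsymbol\Omega)$ denote its solution. By \eqref{eq:adjoint-regularity}, $\mathbf z\in\mathbf H^2(\boldsymbol\Omega)$ with $\|\mathbf z\|_{\mathbf H^2(\boldsymbol\Omega)}\le C_{\rm reg}\|\mathbf e\|_{\mathbf L^2(\boldsymbol\Omega)}$.

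\textbf{Step 1: the duality identity.} The first step is to show that
\[
\|\mathbf e\|_{\mathbf L^2(\boldsymbol\Omega)}^2=a_h(\mathbf e,\mathbf z).
\]
Since $\mathbf e\in\mathbf H^2(\boldsymbol\Omega)$ (because $\mathbf u\in\mathbf H^s$ with $s\ge2$ and $\mathbf V_h\subset\mathbf H^m$, $m\ge2$), we can integrate by parts on each subdomain, as in the proof of Lemma~\ref{lem:consistency}:
\[
(\mathbf e,\mathbf e)_{\mathbf L^2(\boldsymbol\Omega)}
=(\mathbf A\nabla\mathbf e,\nabla\mathbf z)_{\mathbf L^2(\boldsymbol\Omega)}
-\langle\jump{\mathbf e\,\mathbf A\nabla\mathbf z\cdot\vec n_\Gamma}\rangle
-\langle\mathbf e,\mathbf A\nabla\mathbf z\cdot\vec n_{\partial\Omega}\rangle_{\mathbf L^2(\Gamma_{\boldsymbol\Omega})}.
\]
The interface term should be read as $\int_\Gamma(e^-A^-\nabla z^--e^+A^+\nabla z^+)\cdot\vec n_\Gamma$. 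Using $\jump{\mathbf A\nabla\mathbf z}\cdot\vec n_\Gamma=0$ together with the identity $\jump{ab}=\jump a\avg b+\avg a\jump b$, this interface term reduces to $\langle\jump{\mathbf e},\avg{\mathbf A\nabla\mathbf z}\cdot\vec n_\Gamma\rangle_{L^2(\Gamma)}$. We then compare with the definition \eqref{eq:nitsche-bilinear} of $a_h(\mathbf e,\mathbf z)$. The terms containing $\jump{\mathbf z}$ or $\mathbf z|_{\Gamma_{\boldsymbol\Omega}}$ vanish because $\mathbf z\in\mathbf H_0^1(\boldsymbol\Omega)$, namely the flux term $\avg{\mathbf A\nabla\mathbf e}\cdot\vec n_\Gamma$ tested against $\jump{\mathbf z}$, both penalties, and the boundary flux of $\mathbf e$ tested against $\mathbf z$. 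The remaining terms match exactly. The sign conventions in this step need care, but the symmetric Nitsche form is designed to make the identity hold (adjoint consistency).

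\textbf{Step 2: Galerkin orthogonality and continuity.} By \eqref{eq:galerkin-orthogonality} and the symmetry of $a_h$, we have $a_h(\mathbf e,\Pi_h\mathbf z)=0$, so $\|\mathbf e\|^2_{\mathbf L^2}=a_h(\mathbf e,\mathbf z-\Pi_h\mathbf z)$. Both arguments lie in $\mathbf H^2(\boldsymbol\Omega)$, so the augmented continuity \eqref{eq:augmented-continuity} applies and gives
\[
\|\mathbf e\|^2_{\mathbf L^2(\boldsymbol\Omega)}\le C_b\|\mathbf e\|_{h,*}\|\mathbf z-\Pi_h\mathbf z\|_{h,*}.
\]

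\textbf{Step 3: conclusion.} Theorem~\ref{thm:energy-error} bounds $\|\mathbf e\|_{h,*}\le Ch^{s-1}\|\mathbf u\|_{\mathbf H^s}$, and \eqref{eq:adjoint-approximation} bounds $\|\mathbf z-\Pi_h\mathbf z\|_{h,*}\le CC_{\rm reg}h\|\mathbf e\|_{\mathbf L^2}$. Substituting both into Step 2 and dividing by $\|\mathbf e\|_{\mathbf L^2}$ (the case $\mathbf e=0$ being trivial) yields \eqref{eq:L2-error}.

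The main obstacle is Step 1. It requires justifying Green's formula for $\mathbf e$ against $\mathbf z$, which is fine since both are piecewise $H^2$ and therefore have $L^2$ conormal traces. It also requires verifying the interface algebra with the jump and average conventions, checking that all signs in \eqref{eq:nitsche-bilinear} are compatible with the orientation of $\vec n_\Gamma$ and $\vec n_{\partial\Omega}$.
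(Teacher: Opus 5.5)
Your proposal is correct and follows the paper's proof essentially step for step. Both use the duality identity $\|\mathbf u-\mathbf u_h\|_{\mathbf L^2(\boldsymbol\Omega)}^2=a_h(\mathbf u-\mathbf u_h,\mathbf z)$, obtained from subdomain integration by parts and the homogeneous adjoint conditions, then Galerkin orthogonality to insert $\Pi_h\mathbf z$, then augmented-norm continuity \eqref{eq:augmented-continuity} with \eqref{eq:adjoint-approximation}, and finally the energy estimate of Theorem~\ref{thm:energy-error}. One minor point: symmetry of $a_h$ is not needed to get $a_h(\mathbf e,\Pi_h\mathbf z)=0$, because your $\mathbf e$ already sits in the first argument of \eqref{eq:galerkin-orthogonality}.
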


\begin{proof}
Let $\mathbf z$ solve the homogeneous adjoint interface problem
\eqref{eq:adjoint-interface-problem} with
$\mathbf r=\mathbf u-\mathbf u_h$.
Integration by parts on the subdomains, symmetry of $\mathbf A$,
and the homogeneous transmission and boundary conditions give
\begin{align*}
\|\mathbf u-\mathbf u_h\|_{\mathbf L^2(\boldsymbol\Omega)}^2
=&
(\mathbf A\nabla(\mathbf u-\mathbf u_h),
 \nabla\mathbf z)_{\mathbf L^2(\boldsymbol\Omega)}
-
\left\langle
\jump{\mathbf u-\mathbf u_h},
\avg{\mathbf A\nabla\mathbf z}\cdot\vec n_\Gamma
\right\rangle_{L^2(\Gamma)}\\
&-
\left\langle
\mathbf u-\mathbf u_h,
\mathbf A\nabla\mathbf z\cdot\vec n_{\partial\Omega}
\right\rangle_{\mathbf L^2(\Gamma_{\boldsymbol\Omega})}=a_h(\mathbf u-\mathbf u_h,\mathbf z).
\end{align*}
Since $\Pi_h\mathbf z\in\mathbf V_h$, Galerkin orthogonality yields
$\|\mathbf u-\mathbf u_h\|_{\mathbf L^2(\boldsymbol\Omega)}^2
=a_h(\mathbf u-\mathbf u_h,\mathbf z-\Pi_h\mathbf z)$.
Using continuity in the augmented norm
\eqref{eq:augmented-continuity} and the adjoint approximation
bound \eqref{eq:adjoint-approximation} obtained from
Lemma~\ref{lem:kernel-approximation} with $s=2$, we obtain
\begin{align*}
\|\mathbf u-\mathbf u_h\|_{\mathbf L^2(\boldsymbol\Omega)}^2
\le
C_b\|\mathbf u-\mathbf u_h\|_{h,*}
\|\mathbf z-\Pi_h\mathbf z\|_{h,*}
&\le
Ch\|\mathbf u-\mathbf u_h\|_{h,*}
\|\mathbf z\|_{\mathbf H^2(\boldsymbol\Omega)}
\\
&\le
Ch\|\mathbf u-\mathbf u_h\|_{h,*}
\|\mathbf u-\mathbf u_h\|_{\mathbf L^2(\boldsymbol\Omega)}.
\end{align*}
If the error is nonzero, division gives
\begin{equation}
\|\mathbf u-\mathbf u_h\|_{\mathbf L^2(\boldsymbol\Omega)}
\le Ch\|\mathbf u-\mathbf u_h\|_{h,*}.
\label{eq:L2-duality-bound}
\end{equation}
The same bound holds trivially when the error vanishes.
Combining \eqref{eq:L2-duality-bound} with
\eqref{eq:energy-error} proves \eqref{eq:L2-error}.
\end{proof}

\subsection{Conditioning of the discrete system}
\label{subsec:conditioning}

The stability and error estimates concern functions in the
trial space.
The conditioning of the coefficient system also depends
on the kernel-translate basis.
The preceding results apply at any fixed positive kernel scale,
whereas the scale-explicit analysis below is restricted to
$0<\varepsilon\le1$.
Fix a reference kernel $\Phi_m$ satisfying
Assumption~\ref{ass:sobolev-kernel}, and set
$\Phi_{m,\varepsilon}(\vec x):=\Phi_m(\varepsilon\vec x)$ and
$\phi_{m,\varepsilon}(\vec x,\vec y)
:=\Phi_{m,\varepsilon}(\vec x-\vec y)$ for $0<\varepsilon\le1$.
Let $\mathbf V_{h,\varepsilon}$ and $K_{h,\varepsilon}$ denote
the space and stiffness matrix obtained by using this kernel in
\eqref{eq:product-kernel-space} and
\eqref{eq:kernel-galerkin-system}.
The center-family bounds in Assumption~\ref{ass:product-kernel}
are independent of $\varepsilon$.
The Fourier scaling identity
$\widehat\Phi_{m,\varepsilon}(\vec\xi)
=\varepsilon^{-d}\widehat\Phi_m(\vec\xi/\varepsilon)$ gives
\begin{equation}
c_\phi\varepsilon^{2m-d}(1+|\vec\xi|^2)^{-m}
\le\widehat\Phi_{m,\varepsilon}(\vec\xi)
\le C_\phi\varepsilon^{-d}(1+|\vec\xi|^2)^{-m}.
\label{eq:scaled-fourier-bounds}
\end{equation}

Mirzaei \cite{Mirzaei2026} studies scale-uniform inverse
inequalities in scaled and standard Sobolev norms for
$\delta^{-d}\Phi_m(\cdot/\delta)$ with $0<\delta\le1$.
Up to a scalar factor, our kernels correspond to
$\delta=\varepsilon^{-1}\ge1$.
For this range, we adapt the band-limited interpolation and
native-space argument in \cite{SunLing2025}, tracking the scale
dependence explicitly.
This gives the $H^1$-based inverse estimate needed below for
the condition-number analysis.

\begin{lemma}[Scale-uniform Bernstein estimate]
\label{lem:scaled-bernstein}
Under Assumption~\ref{ass:sobolev-kernel}, let $m\ge2$.
For $1\le\alpha\le m$, there exists $C>0$, independent of
$\varepsilon$, $X_D$, and $v$, such that
\begin{equation}
\|v\|_{H^\alpha(D)}
\le Cq_D^{1-\alpha}\|v\|_{H^1(D)},
\qquad v\in V_{X_D,\Phi_{m,\varepsilon},D}.
\label{eq:scaled-bernstein}
\end{equation}
\end{lemma}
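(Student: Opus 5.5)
The plan is to prove the endpoint case $\alpha=m$ with a constant uniform in $\varepsilon$, and then obtain every $1\le\alpha\le m$ by Sobolev interpolation on the Lipschitz domain $D$, using the extension operator of \cite{Rychkov1999}:
\[
\|v\|_{H^\alpha(D)}\le C\|v\|_{H^1(D)}^{\theta}\|v\|_{H^m(D)}^{1-\theta},
\qquad \alpha=\theta+(1-\theta)m .
\]
Substituting $\|v\|_{H^m(D)}\le Cq_D^{1-m}\|v\|_{H^1(D)}$ then gives \eqref{eq:scaled-bernstein}. The work is therefore in the endpoint estimate.

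\textbf{Step 1: uniform norm equivalence.} Rewrite the scaled Fourier bounds \eqref{eq:scaled-fourier-bounds} more sharply. Using $\widehat\Phi_{m,\varepsilon}(\vec\xi)=\varepsilon^{-d}\widehat\Phi_m(\vec\xi/\varepsilon)$ and \eqref{eq:sobolev-kernel},
\[
c_\phi\,\varepsilon^{2m-d}(\varepsilon^2+|\vec\xi|^2)^{-m}
\le\widehat\Phi_{m,\varepsilon}(\vec\xi)
\le C_\phi\,\varepsilon^{2m-d}(\varepsilon^2+|\vec\xi|^2)^{-m}.
\]
Hence the rescaled native norm $\varepsilon^{2m-d}\|f\|_{\mathcal N_{\phi_{m,\varepsilon}}(\mathbb R^d)}^2$ is equivalent to
\[
[f]_\varepsilon^2:=\int(\varepsilon^2+|\vec\xi|^2)^m|\hat f|^2,
\]
with constants independent of $\varepsilon$. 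For $0<\varepsilon\le1$ we also have $|f|_{H^m(\mathbb R^d)}^2\le[f]_\varepsilon^2\le\|f\|_{H^m(\mathbb R^d)}^2$. This is the point where the restriction $\varepsilon\le1$ enters.

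\textbf{Step 2: minimal-norm comparison.} Let $s=\sum_jc_j\Phi_{m,\varepsilon}(\cdot-\vec x_j)$ be the global extension of $v$. Then $s$ is the minimal native-norm interpolant on $\mathbb R^d$ of its values on $X_D$. For any $f\in H^m(\mathbb R^d)$ with $f|_{X_D}=v|_{X_D}$, Step 1 and minimality give
\[
|v|_{H^m(D)}\le|s|_{H^m(\mathbb R^d)}\le[s]_\varepsilon\le C[f]_\varepsilon .
\]
The factors $\varepsilon^{2m-d}$ cancel on both sides, which is why the constant is uniform.

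\textbf{Step 3: comparison interpolant (main obstacle).} One needs some $f$ with $f|_{X_D}=v|_{X_D}$ and
\[
[f]_\varepsilon\le Cq_D^{1-m}\|v\|_{H^1(D)} .
\]
I would follow the band-limited construction of \cite{SunLing2025}. Take $f=f_\sigma$ band-limited to $|\vec\xi|\le\sigma\simeq q_D^{-1}$. On that band, $(\varepsilon^2+|\vec\xi|^2)^m\le(1+\sigma^2)^{m-1}(\varepsilon^2+|\vec\xi|^2)$, so $[f_\sigma]_\varepsilon^2\le Cq_D^{2-2m}\|f_\sigma\|_{H^1}^2$. The difficulty is that the interpolation data must be controlled by $\|v\|_{H^1(D)}$ only, even though point values are not $H^1$-continuous when $d\ge2$.

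The first thing to try is to take $f_\sigma$ from the smoothing $S^D_{q_D}$ of \eqref{eq:regularized-interpolant} applied to $E_Dv$, and correct the residual $r=E_Dv-S^D_{q_D}v$ at the centers by band-limited bumps adapted to the separation $q_D$. The correction is bounded through a sampling inequality of the form
\[
q_D^d\sum_j|r(\vec x_j)|^2\le C\bigl(\|r\|_{L^2}^2+q_D^{2m}|r|_{H^m}^2\bigr).
\]
This produces a term $C|v|_{H^m(D)}$ on the right-hand side, with a constant that is not small. To avoid that circularity, I would carry it out as in \cite{SunLing2025}: either insert an intermediate dilation so that the $H^m$ contribution comes with a factor $<1$ and can be absorbed into the left side of Step 2, or first use Lemma~\ref{lem:full-range-bernstein} for the reference kernel on the $\varepsilon$-dilated center set and verify that its constants depend on the kernel only through the uniform bounds of Step 1.

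Combining Steps 2–3 with $\|v\|_{H^m(D)}\le C(\|v\|_{H^1(D)}+|v|_{H^m(D)})$ gives the endpoint estimate. The interpolation argument above then completes the proof.
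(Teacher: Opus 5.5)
\textbf{There is a genuine gap in Step~3.} Your Steps~1--2 and the closing interpolation between orders $1$ and $m$ agree with the paper; the factors $\varepsilon^{2m-d}$ cancel exactly as you say. But Step~3 is the whole content of the endpoint estimate, and it is left open. You correctly observe that $v|_{X_D}$ is not controlled by $\|v\|_{H^1(D)}$ when $d\ge2$. That makes any direct construction of $f$ with $[f]_\varepsilon\le Cq_D^{1-m}\|v\|_{H^1(D)}$ circular, and neither remedy you suggest closes the argument.

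\emph{Intermediate dilation.} This gives no mechanism for a coefficient smaller than $1$ in front of $|v|_{H^m(D)}$. At order $m$, the factor $q_D^{m}$ in your sampling inequality exactly balances the $q_D^{-m}$ cost of band-limited bumps at separation $q_D$. The bound your construction produces therefore carries an $O(1)$ multiple of $|E_Dv|_{H^m}$, which also includes the norm of $E_D$, and rescaling the band only changes that constant.

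\emph{Lemma~\ref{lem:full-range-bernstein} on the dilated configuration.} Applying it to $v(\cdot/\varepsilon)$ with centers $\varepsilon X_D$ fixes the kernel but moves the problem to the shrinking domain $\varepsilon D$. The constant there depends on the domain through extension and inhomogeneous-norm constants. If you track the scalings, the $\varepsilon$ powers cancel, so proving that this constant is uniform in $\varepsilon$ is essentially the lemma itself.

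\textbf{The missing idea.} Do not aim at $H^1$ directly; pass through an intermediate order. Fix $r\in(\max\{1,d/2\},m)$. Point evaluation is continuous on $H^r$, so \cite[Lemma~4.3]{SunLing2025} gives a band-limited $F_\sigma$ with $F_\sigma|_{X_D}=v|_{X_D}$, $\sigma\simeq q_D^{-1}$, and $\|F_\sigma\|_{H^m(\mathbb R^d)}\le Cq_D^{r-m}\|v\|_{H^r(D)}$. Your Step~2 with $f=F_\sigma$, together with $q_D\le1$, then yields $\|v\|_{H^m(D)}\le Cq_D^{r-m}\|v\|_{H^r(D)}$. Next use $\|v\|_{H^r(D)}\le C\|v\|_{H^1(D)}^{\theta}\|v\|_{H^m(D)}^{1-\theta}$ with $\theta=(m-r)/(m-1)$, and divide by $\|v\|_{H^m(D)}^{1-\theta}$. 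Since $(r-m)/\theta=1-m$, this gives exactly $\|v\|_{H^m(D)}\le Cq_D^{1-m}\|v\|_{H^1(D)}$.

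The same device would also rescue your sampling-inequality route. Sample with $q_D^{2r}|\cdot|_{H^r}^2$ in place of $q_D^{2m}|\cdot|_{H^m}^2$. The resulting term $q_D^{r-m}\|v\|_{H^r(D)}$ equals $(q_D^{1-m}\|v\|_{H^1(D)})^{\theta}\|v\|_{H^m(D)}^{1-\theta}$ up to a constant, and it can be absorbed by Young's inequality.
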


\begin{proof}
Fix $r\in(\max\{1,d/2\},m)$.
By \cite[Lemma~4.3]{SunLing2025}, there is a band-limited
function $F_\sigma$ satisfying $F_\sigma|_{X_D}=v|_{X_D}$ and
$\sigma\simeq q_D^{-1}$, with
$\|F_\sigma\|_{H^m(\mathbb R^d)}
\le Cq_D^{r-m}\|v\|_{H^r(D)}$.
The Fourier bounds and the minimum-norm property of kernel
interpolation give
\begin{align*}
|v|_{H^m(D)}
&\le C\varepsilon^{m-d/2}
\|v\|_{\mathcal N_{\phi_{m,\varepsilon}}(D)}
\le C\varepsilon^{m-d/2}
\|F_\sigma\|_{\mathcal N_{\phi_{m,\varepsilon}}(\mathbb R^d)}
\\
&\le C\varepsilon^{m-d/2}\varepsilon^{d/2-m}
\|F_\sigma\|_{H^m(\mathbb R^d)}
\le Cq_D^{r-m}\|v\|_{H^r(D)}.
\end{align*}
The first inequality follows from
$|\vec\xi|^{2m}\widehat\Phi_{m,\varepsilon}(\vec\xi)
\le C_\phi\varepsilon^{2m-d}$, obtained by Fourier scaling
and the minimum-extension norm.
For noninteger $m$, $|\cdot|_{H^m(D)}$ denotes the highest-order
Sobolev--Slobodeckij seminorm.
On the fixed domain,
$\|v\|_{H^m(D)}\le C(\|v\|_D+|v|_{H^m(D)})$.
Since $q_D\le1$, Sobolev interpolation therefore yields
\[
\|v\|_{H^m(D)}
\le Cq_D^{r-m}
\|v\|_{H^1(D)}^{(m-r)/(m-1)}
\|v\|_{H^m(D)}^{(r-1)/(m-1)}.
\]
For $v\ne0$, division gives
$\|v\|_{H^m(D)}\le Cq_D^{1-m}\|v\|_{H^1(D)}$.
Interpolation between orders $1$ and $m$ proves
\eqref{eq:scaled-bernstein}.
\end{proof}

\begin{theorem}[Condition-number estimate]
\label{thm:condition-number}
Assume the geometric and coefficient conditions of
Subsection~\ref{subsec:interface-setting}.
Let the reference kernel $\Phi_m$ satisfy
Assumption~\ref{ass:sobolev-kernel} with $m\ge2$, and let the
center families satisfy \eqref{eq:center-assumptions}.
Choose penalty parameters independent of $h$ and $\varepsilon$
satisfying \eqref{eq:penalty-threshold}, with $C_{\rm tr}$ replaced
by a bound uniform in $\varepsilon$.
Then, for $0<\varepsilon\le1$, $K_{h,\varepsilon}$ is symmetric
positive definite and
\begin{equation}
\kappa_2(K_{h,\varepsilon})
\le C\varepsilon^{-(4m-2d)}h^{-(4m-1)}.
\label{eq:condition-number}
\end{equation}
\end{theorem}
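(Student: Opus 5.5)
The plan is to bound the extreme eigenvalues of $K_{h,\varepsilon}$ through the Rayleigh quotient $\vec c^{\top}K_{h,\varepsilon}\vec c/|\vec c|^2=a_h(\mathbf v_{\vec c},\mathbf v_{\vec c})/|\vec c|^2$, where $\mathbf v_{\vec c}:=\sum_j c_j\boldsymbol\varphi_j$. Symmetry is immediate from the symmetry of $a_h$. First I would check that the proofs of Theorems~\ref{thm:trace-inverse} and \ref{thm:conormal-trace} go through with Lemma~\ref{lem:scaled-bernstein} in place of Lemma~\ref{lem:full-range-bernstein} for the gradient estimate. This step is needed only for positive definiteness and the lower bound, and it yields a constant $C_{\rm tr}$ uniform in $\varepsilon$, which the hypothesis of the theorem presupposes. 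Theorem~\ref{thm:well-posedness} then gives positive definiteness and coercivity $a_h(\mathbf v,\mathbf v)\ge c_s\|\mathbf v\|_h^2$ uniformly in $\varepsilon$. The target is
\[
\lambda_{\max}(K_{h,\varepsilon})\le Ch^{-1-d},
\qquad
\lambda_{\min}(K_{h,\varepsilon})\ge C\varepsilon^{4m-2d}h^{4m-2-d},
\]
whose ratio is exactly \eqref{eq:condition-number}.

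\emph{Upper bound.} By continuity \eqref{eq:discrete-continuity}, $a_h(\mathbf v,\mathbf v)\le C_b\|\mathbf v\|_h^2$. The continuous trace inequality and $h\le1$ give $\|\mathbf v\|_h^2\le Ch^{-1}\|\mathbf v\|_{\mathbf H^1(\boldsymbol\Omega)}^2$. On each subdomain, write $\|v^\eta\|_{H^1(\Omega^\eta)}^2=\vec c_\eta^{\top}M_\eta\vec c_\eta$, where $M_\eta$ is the $H^1(\Omega^\eta)$ Gram matrix of the translates. Since $|\Phi_{m,\varepsilon}|\le\Phi_m(0)$ and $|\nabla\Phi_{m,\varepsilon}|\le\varepsilon\|\nabla\Phi_m\|_\infty\le C$ for $\varepsilon\le1$, every entry of $M_\eta$ is bounded independently of $\varepsilon$. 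Here $\nabla\Phi_m$ is bounded because $m\ge2$; in particular this holds for the Mat\'ern and Wendland families. Gershgorin's theorem and the packing bound $N_\eta\le Cq_\eta^{-d}\le Ch^{-d}$ then give $\lambda_{\max}(M_\eta)\le Ch^{-d}$, and hence $\lambda_{\max}(K_{h,\varepsilon})\le Ch^{-1-d}$ with no dependence on $\varepsilon$.

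\emph{Lower bound.} Coercivity and the interface Poincar\'e inequality (Lemma~\ref{lem:interface-poincare}) give $a_h(\mathbf v,\mathbf v)\ge c\|\mathbf v\|_{\mathbf H^1(\boldsymbol\Omega)}^2$. Lemma~\ref{lem:scaled-bernstein} with $\alpha=m$ then gives $\|v^\eta\|_{H^1}^2\ge cq^{2m-2}\|v^\eta\|_{H^m(\Omega^\eta)}^2$. Next I pass to the native norm. Take the bounded extension $E_D$, use the minimal-extension property, and apply the lower Fourier bound in \eqref{eq:scaled-fourier-bounds}. This gives
\[
\|v^\eta\|_{\mathcal N_{\phi_{m,\varepsilon}}(\Omega^\eta)}^2\le\|E_Dv^\eta\|_{\mathcal N_{\phi_{m,\varepsilon}}(\mathbb R^d)}^2\le C\varepsilon^{-(2m-d)}\|v^\eta\|_{H^m(\Omega^\eta)}^2,
\]
so that $\|v^\eta\|_{H^m}^2\ge c\varepsilon^{2m-d}\vec c_\eta^{\top}G_\eta\vec c_\eta$. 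Finally I need a lower eigenvalue bound for the scaled kernel matrix, $\lambda_{\min}(G_\eta)\ge c\varepsilon^{2m-d}q^{2m-d}$. This follows from the standard Fourier argument of Narcowich--Ward and Wendland, which bounds $\lambda_{\min}$ below by a multiple of $q^{-d}\inf_{|\vec\xi|\le M/q}\widehat\Phi_{m,\varepsilon}(\vec\xi)$, combined with $\widehat\Phi_{m,\varepsilon}(\vec\xi)\ge c_\phi\varepsilon^{2m-d}(1+M^2q^{-2})^{-m}$. Chaining these bounds with $q\simeq h$ yields the stated $\lambda_{\min}$.

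\emph{Main obstacle.} I expect the main difficulty to be this last chain: tracking the $\varepsilon$-dependence in the $\lambda_{\min}(G_\eta)$ bound and in the native-to-Sobolev comparison so that exactly $\varepsilon^{2m-d}$ appears twice. It is also essential that the upper bound is done entrywise. If I went instead through $\|v\|_{H^m}^2\le C\varepsilon^{-d}\vec c^{\top}G\vec c$, a spurious factor $\varepsilon^{-d}$ would appear in $\lambda_{\max}$. With the entrywise argument, the ratio $\lambda_{\max}/\lambda_{\min}$ is $\varepsilon^{-(4m-2d)}h^{-(4m-1)}$, as claimed.
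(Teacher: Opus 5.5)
Your proposal is correct and follows the paper's proof essentially step for step. The lower bound uses the same chain: coercivity with an $\varepsilon$-uniform $C_{\rm tr}$ from Lemma~\ref{lem:scaled-bernstein}, the interface Poincar\'e inequality, Lemma~\ref{lem:scaled-bernstein} with $\alpha=m$, the extension/native-norm comparison, and $\lambda_{\min}(G_{\Omega^\eta,\varepsilon})\ge c\varepsilon^{2m-d}q_\eta^{2m-d}$. For the $h^{-d-1}$ upper bound, your Gershgorin bound on the $H^1$ Gram matrix is equivalent to the paper's Cauchy--Schwarz estimate with $\|\boldsymbol\varphi_{j,\varepsilon}\|_h^2\le Ch^{-1}$.

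The one step to repair is your justification that the Gram entries are uniformly bounded. The Fourier bounds \eqref{eq:sobolev-kernel} guarantee $\nabla\Phi_m\in L^\infty$ only when $m>(d+1)/2$, and the theorem allows kernels outside that range. For example, the Mat\'ern kernel with $d=4$, $m=2.1$ satisfies all the hypotheses, yet its gradient is unbounded at the origin. Instead bound the entries through $\|\varphi_{j,\varepsilon}\|_{H^1(D)}\le\|\varphi_{j,\varepsilon}\|_{H^m(D)}\le C\bigl(\Phi_m(\vec 0)|D|^{1/2}+\varepsilon^{m-d/2}|\Phi_m|_{H^m(\mathbb R^d)}\bigr)$, as the paper does.
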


\begin{proof}
Using the scale-uniform Bernstein estimate Lemma~\ref{lem:scaled-bernstein} with $\alpha=2$ in the
proof of the discrete conormal control Theorem~\ref{thm:conormal-trace} gives a bound on
$C_{\rm tr}$ independent of $\varepsilon$.
Theorem~\ref{thm:well-posedness} therefore gives continuity and
coercivity with constants independent of $h$ and $\varepsilon$.
For $\mathbf v_h\in\mathbf V_{h,\varepsilon}$, let $\vec c$
be its translate coefficient vector, with subdomain blocks
$\vec c^{\,\eta}$.
Let $G_{\Omega^\eta,\varepsilon}$ be the corresponding kernel matrix.
The Fourier lower bound and the standard kernel-matrix estimate
\cite[Chapter~12]{Wendland2005} give
$\lambda_{\min}(G_{\Omega^\eta,\varepsilon})
\ge c\varepsilon^{2m-d}q_\eta^{2m-d}$.
Choose a bounded extension operator
$E_\eta:H^m(\Omega^\eta)\to H^m(\mathbb R^d)$
\cite[Lemma~4.1]{SunLing2025}.
The native norm identity, its minimum-extension characterization,
and the Fourier lower bound give
\begin{align*}
c\varepsilon^{2m-d}q_\eta^{2m-d}
\|\vec c^{\,\eta}\|_2^2
&\le
(\vec c^{\,\eta})^\top
G_{\Omega^\eta,\varepsilon}\vec c^{\,\eta}
=
\|v_h^\eta\|_{\mathcal N_{\phi_{m,\varepsilon}}(\Omega^\eta)}^2
\le
\|E_\eta v_h^\eta\|_
{\mathcal N_{\phi_{m,\varepsilon}}(\mathbb R^d)}^2\\
&
\le C\varepsilon^{d-2m}
\|E_\eta v_h^\eta\|_{H^m(\mathbb R^d)}^2
\le C\varepsilon^{d-2m}
\|v_h^\eta\|_{H^m(\Omega^\eta)}^2.
\end{align*}
Applying the scale-uniform Bernstein estimate Lemma~\ref{lem:scaled-bernstein} with $\alpha=m$ yields
\begin{align*}
\|\vec c^{\,\eta}\|_2^2
&\le C\varepsilon^{-(4m-2d)}q_\eta^{d-2m}
\|v_h^\eta\|_{H^m(\Omega^\eta)}^2
\le C\varepsilon^{-(4m-2d)}q_\eta^{-(4m-d-2)}
\|v_h^\eta\|_{H^1(\Omega^\eta)}^2.
\end{align*}
Summing over the subdomains and using coercivity, the interface Poincar\'e inequality
Lemma~\ref{lem:interface-poincare}, and $q_\eta\simeq h$, we obtain
\begin{align*}
\vec c^\top K_{h,\varepsilon}\vec c
&=a_h(\mathbf v_h,\mathbf v_h)
\ge c_s\|\mathbf v_h\|_h^2
\ge c\|\mathbf v_h\|_{\mathbf H^1(\boldsymbol\Omega)}^2
\ge c\varepsilon^{4m-2d}h^{4m-d-2}\|\vec c\|_2^2.
\end{align*}
Consequently,
$\lambda_{\min}(K_{h,\varepsilon})
\ge c\varepsilon^{4m-2d}h^{4m-d-2}$.
For the upper bound, let
$\varphi_{j,\varepsilon}
:=\Phi_m(\varepsilon(\cdot-\vec x_j))|_D$,
where $D=\Omega^\eta$.
The Fourier decay gives $\Phi_m\in H^m(\mathbb R^d)$, and
positive definiteness gives $|\Phi_m(\vec x)|\le\Phi_m(\vec0)$.
Scaling and fixed-domain norm equivalence yield
\[
\|\varphi_{j,\varepsilon}\|_{H^m(D)}
\le C\left(
\Phi_m(\vec0)|D|^{1/2}
+\varepsilon^{m-d/2}|\Phi_m|_{H^m(\mathbb R^d)}
\right)
\le C.
\]
The trace theorem then gives
$\|\boldsymbol\varphi_{j,\varepsilon}\|_h^2\le Ch^{-1}$
for each product basis function.
Using continuity, Cauchy--Schwarz, and $N\le Ch^{-d}$, we obtain
\begin{align*}
\vec c^\top K_{h,\varepsilon}\vec c
&\le C_b\|\mathbf v_h\|_h^2
\le C\|\vec c\|_2^2
\sum_{j=1}^N\|\boldsymbol\varphi_{j,\varepsilon}\|_h^2
\le CNh^{-1}\|\vec c\|_2^2
\le Ch^{-d-1}\|\vec c\|_2^2.
\end{align*}
Consequently,
$\lambda_{\max}(K_{h,\varepsilon})
\le Ch^{-d-1}$.
Dividing $\lambda_{\max}$ by
$\lambda_{\min}$ proves \eqref{eq:condition-number}.
\end{proof}

For fixed $\varepsilon\in(0,1]$,
Theorem~\ref{thm:condition-number} gives
$\kappa_2(K_{h,\varepsilon})
\le C_\varepsilon h^{-(4m-1)}$.
For a compactly supported reference kernel of unit support,
the choice $\varepsilon=(C_Wh)^{-1}$ gives support radius $C_Wh$.
Substitution into \eqref{eq:condition-number} yields
$\kappa_2(K_{h,(C_Wh)^{-1}})
\le C C_W^{4m-2d}h^{-(2d-1)}$
on levels satisfying $C_Wh\ge1$.
For fixed $C_W$, an asymptotic analysis as $h\to0$
would also require estimates for $\varepsilon>1$.
Subsection~\ref{subsec:example3} examines how this support
scaling affects accuracy, sparsity, and conditioning
over a finite refinement range.

To compare the conditioning of the kernel Galerkin system
with that of its finite element counterpart, we consider
the $L^2$ mass matrix $M_{h,\varepsilon}$.
Define
$(M_{h,\varepsilon})_{ij}
:=
(\boldsymbol\varphi_{j,\varepsilon},
 \boldsymbol\varphi_{i,\varepsilon})_
 {\mathbf L^2(\boldsymbol\Omega)}$ with the product kernel-translate basis $\{\boldsymbol\varphi_{j,\varepsilon}\}_{j=1}^N$.
This symmetric positive definite Gram matrix represents
the $L^2$ inner product on the trial space, allowing the
coercivity and inverse estimates to yield spectral bounds.
Multiplying $K_{h,\varepsilon}$ on the left and right by
$M_{h,\varepsilon}^{-1/2}$ gives the stiffness matrix in an
$L^2$-orthonormal basis.
The following corollary bounds the condition number of
this symmetric matrix.

\begin{corollary}[Conditioning in an $L^2$-orthonormal basis]
\label{cor:mass-preconditioning}
Under the assumptions of Theorem~\ref{thm:condition-number},
we have
\begin{equation}
\kappa_2\!\left(
M_{h,\varepsilon}^{-1/2}
K_{h,\varepsilon}
M_{h,\varepsilon}^{-1/2}
\right)
\le C\left(q^{-2}+h^{-1}q^{-1}\right)
\le Ch^{-2},
\qquad 0<\varepsilon\le1,
\label{eq:mass-preconditioned-condition-number}
\end{equation}
where $C$ is independent of $h$ and $\varepsilon$.
\end{corollary}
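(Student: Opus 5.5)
The plan is to reduce the condition number of the symmetric matrix $B:=M_{h,\varepsilon}^{-1/2}K_{h,\varepsilon}M_{h,\varepsilon}^{-1/2}$ to a generalized Rayleigh quotient on the trial space. For $\vec c\in\mathbb R^N$ with $\mathbf v_h=\sum_j c_j\boldsymbol\varphi_{j,\varepsilon}$, we have $\vec c^\top K_{h,\varepsilon}\vec c=a_h(\mathbf v_h,\mathbf v_h)$ and $\vec c^\top M_{h,\varepsilon}\vec c=\|\mathbf v_h\|_{\mathbf L^2(\boldsymbol\Omega)}^2$. Hence
\[
\lambda_{\max}(B)=\max_{\mathbf v_h\ne0}\frac{a_h(\mathbf v_h,\mathbf v_h)}{\|\mathbf v_h\|_{\mathbf L^2(\boldsymbol\Omega)}^2},
\qquad
\lambda_{\min}(B)=\min_{\mathbf v_h\ne0}\frac{a_h(\mathbf v_h,\mathbf v_h)}{\|\mathbf v_h\|_{\mathbf L^2(\boldsymbol\Omega)}^2}.
\]
The basis therefore disappears, and it suffices to bound these two quotients uniformly in $\varepsilon$. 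As in the proof of Theorem~\ref{thm:condition-number}, the penalty choice and Lemma~\ref{lem:scaled-bernstein} with $\alpha=2$ give continuity and coercivity constants $C_b,c_s$ independent of $h$ and $\varepsilon$.

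For the lower bound, coercivity \eqref{eq:coercivity} gives $a_h(\mathbf v_h,\mathbf v_h)\ge c_s\|\mathbf v_h\|_h^2$. The remark after Lemma~\ref{lem:interface-poincare} gives $\|\mathbf v_h\|_h^2\ge c\|\mathbf v_h\|_{\mathbf H^1(\boldsymbol\Omega)}^2\ge c\|\mathbf v_h\|_{\mathbf L^2(\boldsymbol\Omega)}^2$, using $h\le1$. Thus $\lambda_{\min}(B)\ge c$ with $c$ independent of $h$ and $\varepsilon$.

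For the upper bound, continuity gives $a_h(\mathbf v_h,\mathbf v_h)\le C_b\|\mathbf v_h\|_h^2$. I then bound each term of $\|\mathbf v_h\|_h^2$ by $\|\mathbf v_h\|_{\mathbf L^2}^2$ through inverse estimates that are uniform in $\varepsilon$. The main step is an $H^1$--$L^2$ Bernstein bound $\|v\|_{H^1(D)}\le Cq_D^{-1}\|v\|_{L^2(D)}$ for $v\in V_{X_D,\Phi_{m,\varepsilon},D}$ with $C$ independent of $\varepsilon$. Lemma~\ref{lem:scaled-bernstein} only starts from $H^1$, so I redo its argument with $H^1$ replaced by $L^2$. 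The native-space step gives $|v|_{H^m(D)}\le Cq_D^{r-m}\|v\|_{H^r(D)}$ uniformly in $\varepsilon$. Interpolating $H^r$ between $L^2$ and $H^m$ and dividing yields $\|v\|_{H^m}\le Cq_D^{-m}\|v\|_{L^2}$, and interpolation between orders $0$ and $m$ gives the $\alpha=1$ case. This is the step I expect to be the main obstacle. I must check that the lower exponent $r\in(\max\{1,d/2\},m)$ from \cite{SunLing2025} can still be used together with interpolation from $t=0$, and that no $\varepsilon$ enters the constants. If this fails, I will instead invoke the full-range Lemma~\ref{lem:full-range-bernstein} and track its scale dependence in the same way.

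Given this estimate, $\|\nabla\mathbf v_h\|_{\mathbf L^2}^2\le Cq^{-2}\|\mathbf v_h\|_{\mathbf L^2}^2$. For the boundary terms, the multiplicative trace inequality gives $\|v_h^\eta\|_{L^2(\partial\Omega^\eta)}^2\le C\|v_h^\eta\|_{L^2}\|v_h^\eta\|_{H^1}\le Cq^{-1}\|v_h^\eta\|_{L^2}^2$. The jump satisfies $\|\jump{\mathbf v_h}\|_{L^2(\Gamma)}^2\le2\sum_\eta\|v_h^\eta\|_{L^2(\partial\Omega^\eta)}^2$, so $h^{-1}$ times the jump and exterior trace terms is at most $Ch^{-1}q^{-1}\|\mathbf v_h\|_{\mathbf L^2}^2$. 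Hence $\lambda_{\max}(B)\le C(q^{-2}+h^{-1}q^{-1})$. Dividing by $\lambda_{\min}(B)\ge c$ gives the first inequality in \eqref{eq:mass-preconditioned-condition-number}. The second follows from $q\ge h/(2\rho_0)$ together with $h_\eta\ge c_hh$ from \eqref{eq:center-assumptions}.
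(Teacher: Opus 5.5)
Your proof is correct, and its skeleton matches the paper's: Rayleigh quotients for the pencil $(K_{h,\varepsilon},M_{h,\varepsilon})$, coercivity plus the interface Poincar\'e inequality for $\lambda_{\min}$, and continuity plus the multiplicative trace inequality for $\lambda_{\max}$.

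The one genuine difference is how you obtain the $\varepsilon$-uniform inverse bound $\|v\|_{H^1(D)}\le Cq_D^{-1}\|v\|_{L^2(D)}$. The paper does not reopen Lemma~\ref{lem:scaled-bernstein}. It combines the interpolation inequality $\|v\|_{H^1(D)}^2\le C\|v\|_{L^2(D)}\|v\|_{H^2(D)}$ with the already scale-uniform case $\alpha=2$, namely $\|v\|_{H^2(D)}\le Cq_D^{-1}\|v\|_{H^1(D)}$, and then divides by $\|v\|_{H^1(D)}$. Uniformity in $\varepsilon$ is inherited at no extra cost.

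You instead rerun the band-limited/native-space argument with $L^2$ as the base space. This also works, and the step you flagged as the main obstacle is not one:
\begin{itemize}
\item The exponent $r$ enters only through $|v|_{H^m(D)}\le Cq_D^{r-m}\|v\|_{H^r(D)}$, whose $\varepsilon$-independence for $0<\varepsilon\le1$ is unaffected by the choice of base space.
\item Interpolating $H^r$ between $L^2$ and $H^m$ is valid for every $0<r<m$. The constraint $r>1$ is therefore superfluous for you; only $r>d/2$ is needed, for the band-limited interpolant.
\item The lower-order term in $\|v\|_{H^m(D)}\le C(\|v\|_{L^2(D)}+|v|_{H^m(D)})$ merges into the $q_D^{r-m}$ term because $q_D\le1$. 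Dividing then gives $\|v\|_{H^m(D)}\le Cq_D^{-m}\|v\|_{L^2(D)}$.
\end{itemize}
So the fallback to Lemma~\ref{lem:full-range-bernstein} is never needed.

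Your route yields an $\varepsilon$-uniform $H^\alpha$--$L^2$ Bernstein inequality for all $0\le\alpha\le m$ directly. The cost is re-verifying the scale tracking inside the native-space step. The paper's route is a two-line consequence of the lemma as stated, and chaining it with Lemma~\ref{lem:scaled-bernstein} recovers the same $H^\alpha$--$L^2$ bound.

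A minor slip in your final step: since $\rho_\eta=h_\eta/q_\eta$ already uses the halved separation, the bound reads $q_\eta\ge h_\eta/\rho_0\ge (c_h/\rho_0)h$, with no extra factor $2$.
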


\begin{proof}
To prove \eqref{eq:mass-preconditioned-condition-number},
we use the Rayleigh quotient characterization of the extreme
generalized eigenvalues of $(K_{h,\varepsilon},M_{h,\varepsilon})$.
For every nonzero trial function
$\mathbf v_h\in\mathbf V_{h,\varepsilon}$
with coefficient vector $\vec c$, we need to establish
\begin{equation}
C_*
\le
\frac{\vec c^\top K_{h,\varepsilon}\vec c}
     {\vec c^\top M_{h,\varepsilon}\vec c}
=
\frac{a_h(\mathbf v_h,\mathbf v_h)}
     {\|\mathbf v_h\|_{\mathbf L^2(\boldsymbol\Omega)}^2}
\le C^*\left(q^{-2}+h^{-1}q^{-1}\right),
\label{eq:mass-rayleigh-bounds}
\end{equation}
where $C_*,C^*>0$ are independent of $h$ and $\varepsilon$.
The lower bound follows from coercivity
\eqref{eq:coercivity}, the interface Poincar\'e inequality
\eqref{eq:interface-poincare}, and $h\le1$.
For the upper bound, discrete continuity
\eqref{eq:discrete-continuity}, the Nitsche energy norm
\eqref{eq:nitsche-energy-norm}, and the trace
inequality \cite[Theorem~1.6.6]{BrennerScott2008} give
\begin{equation}
\begin{aligned}
a_h(\mathbf v_h,\mathbf v_h)
&\le C_b\|\mathbf v_h\|_h^2
\le C\left(
\|\mathbf v_h\|_{\mathbf H^1(\boldsymbol\Omega)}^2
+h^{-1}
 \|\mathbf v_h\|_{\mathbf L^2(\boldsymbol\Omega)}
 \|\mathbf v_h\|_{\mathbf H^1(\boldsymbol\Omega)}
\right),
\end{aligned}
\label{eq:mass-energy-inverse}
\end{equation}
where the jump is bounded by the two one-sided traces.
Thus it suffices to prove
$\|\mathbf v_h\|_{\mathbf H^1(\boldsymbol\Omega)}
\le Cq^{-1}
\|\mathbf v_h\|_{\mathbf L^2(\boldsymbol\Omega)}$.
Sobolev interpolation and the scale-uniform Bernstein estimate
\eqref{eq:scaled-bernstein} with $\alpha=2$,
applied componentwise, give by Cauchy--Schwarz
\begin{align*}
\|\mathbf v_h\|_{\mathbf H^1(\boldsymbol\Omega)}^2
&\le
C\|\mathbf v_h\|_{\mathbf L^2(\boldsymbol\Omega)}
 \|\mathbf v_h\|_{\mathbf H^2(\boldsymbol\Omega)}
\le
Cq^{-1}\|\mathbf v_h\|_{\mathbf L^2(\boldsymbol\Omega)}
 \|\mathbf v_h\|_{\mathbf H^1(\boldsymbol\Omega)}.
\end{align*}
After division by the nonzero $H^1$ norm, substitution into
the energy estimate \eqref{eq:mass-energy-inverse}
proves the upper bound in \eqref{eq:mass-rayleigh-bounds}.
These Rayleigh quotient bounds and
$q\ge(c_h/\rho_0)h$ from the center assumptions
\eqref{eq:center-assumptions} establish
\eqref{eq:mass-preconditioned-condition-number}.
\end{proof}

The numerical stability of constructing this basis in finite
precision requires separate analysis.
Subsection~\ref{subsec:example3} compares condition estimates
in the kernel-translate and $L^2$-orthonormal bases.

\section{Numerical experiments}
\label{sec:numerics}
We first use two planar interface problems to examine fixed-scale
convergence, and support-radius
effects. A three-dimensional heat-conduction example then illustrates
the implementation on a curved composite with mixed boundary conditions.
All discretizations use the symmetric Nitsche formulation
\eqref{eq:discrete-nitsche} with
$\gamma_\Gamma=\gamma_{\partial,+}=100$.
Independent kernel spaces are constructed on the physical subdomains,
and the integrals are evaluated by numerical quadrature.
Errors are measured in $\mathbf L^2(\boldsymbol\Omega)$ and the
Nitsche energy norm \eqref{eq:nitsche-energy-norm}.
Kernel parameters, quadrature rules, and the solutions used for
comparison are specified below.

\medskip
\noindent\textbf{Problem A (smooth manufactured solution).}
Let $\Omega$ and $\Omega^-$ be the interiors of
$r=\rho_B(\theta)$ and $r=\rho_\Gamma(\theta)$, respectively, where
\begin{equation}
\begin{aligned}
\rho_\Gamma(\theta)
 &=0.6+0.216\sin(3\theta)+0.096\cos(2\theta)+0.24\cos(5\theta),\\
\rho_B(\theta)
 &=1.5+0.14\sin(4\theta)+0.12\cos(6\theta)+0.09\cos(5\theta).
\end{aligned}
\label{eq:ex1-curved-geometry}
\end{equation}
Set $\Omega^+=\Omega\setminus\overline{\Omega^-}$. In the product notation of Section~\ref{sec:setting}, prescribe
\begin{equation}
\begin{aligned}
\mathbf A=\bigl((1+x^2+y^2)I_2,\,(2+xy)I_2\bigr), \qquad
\mathbf u=\bigl(\cos(x+y),\,\sin(x+y)\bigr).
\end{aligned}
\label{eq:ex1-curved-solution}
\end{equation}
The source, interface, and boundary data follow by substitution into
\eqref{eq:intro-interface-problem}.

\medskip
\noindent\textbf{Problem B (numerical reference solution).}
Let $\Omega=(-1,1)^2$, $\Omega^-=\{(x,y):|x|+|y|<1/2\}$, and
$\Omega^+=\Omega\setminus\overline{\Omega^-}$.
We prescribe $\mathbf A=(I_2,10I_2)$, $\mathbf f=(1,1)$,
$g_D=0$, $g_\Gamma=\sqrt2\,y$, and
$\psi_\Gamma=\sqrt2\,(\vec n_\Gamma)_y$.
No exact solution is prescribed. 
Using the corner analysis \cite{Nicaise1990} we can establish finite piecewise
regularity and gives
\begin{equation}
u^\eta\in H^{1+\lambda_\star-\tau}(\Omega^\eta),
\qquad \lambda_\star\approx1.268308,\qquad \tau>0, \qquad \eta=\pm.
\label{eq:ex2-regularity}
\end{equation}
A fitted $P^2$ Nitsche finite element solution with mesh size
$h_{\rm FE}=1/256$ is used as the reference for Problem~B in
Examples~2 and~3. Its assembly uses 3-point Gauss rules on
edges and in each Duffy coordinate.
The solution is computed once and reused, and all reported errors
for Problem~B are differences from this fixed reference solution.

For both problems, independent shifted triangular lattices are clipped
to the two subdomains; no centers are added on the boundaries or
interface. Fig.~\ref{fig:test-geometries} shows representative center
sets and the reference solution. We write $N=|X^-|+|X^+|$ and
$h=\max\{h_-,h_+\}$. Errors are evaluated against the exact solution for Problem~A
and the fixed reference solution for Problem~B.

\begin{figure}[t]
\centering
\begin{subfigure}[t]{0.29\textwidth}
    \centering
    \includegraphics[width=\textwidth]
    {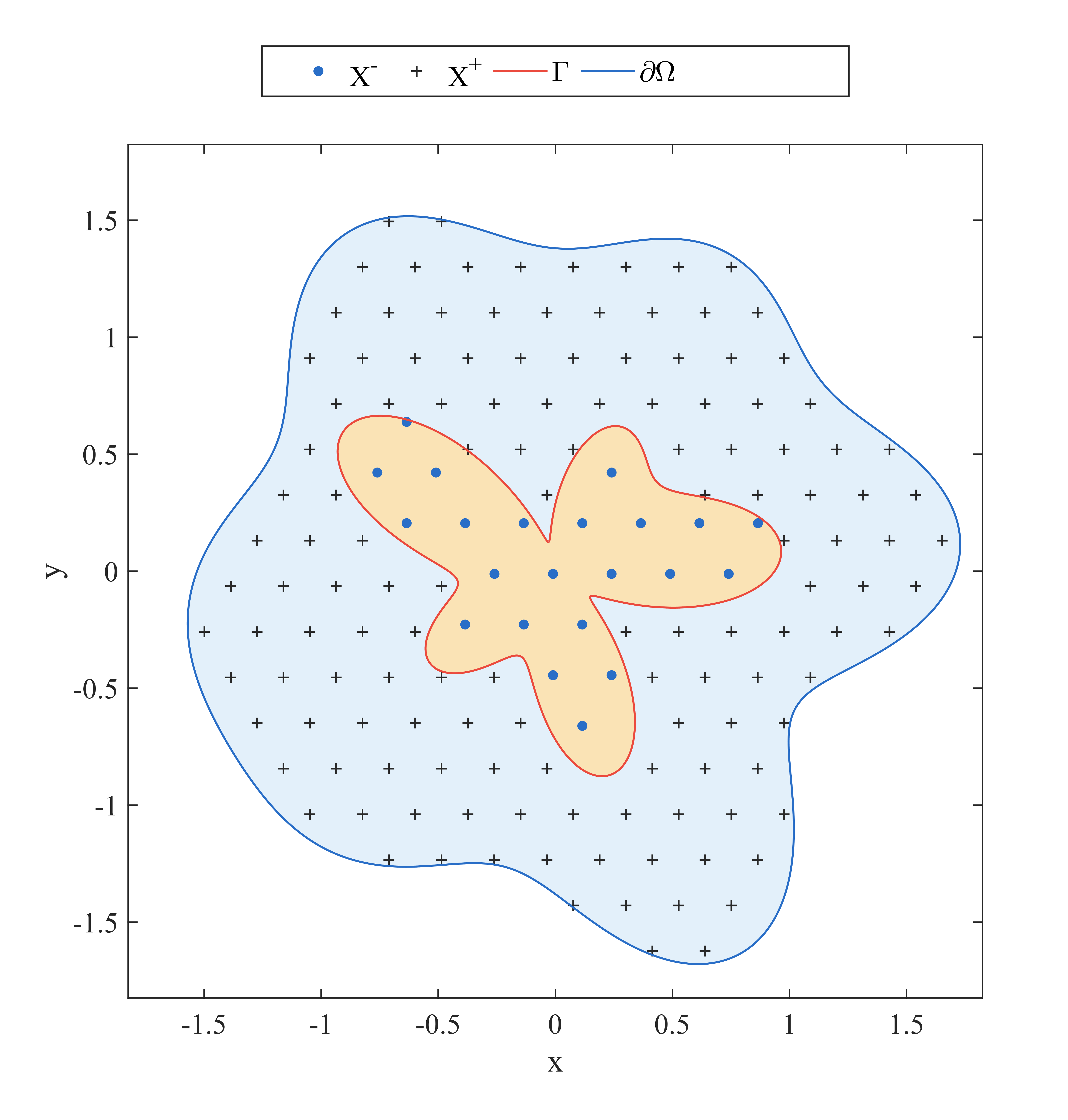}
    \caption{Trial centers for Problem~A.}
\end{subfigure}
\hfill
\begin{subfigure}[t]{0.29\textwidth}
    \centering
    \includegraphics[width=\textwidth]
    {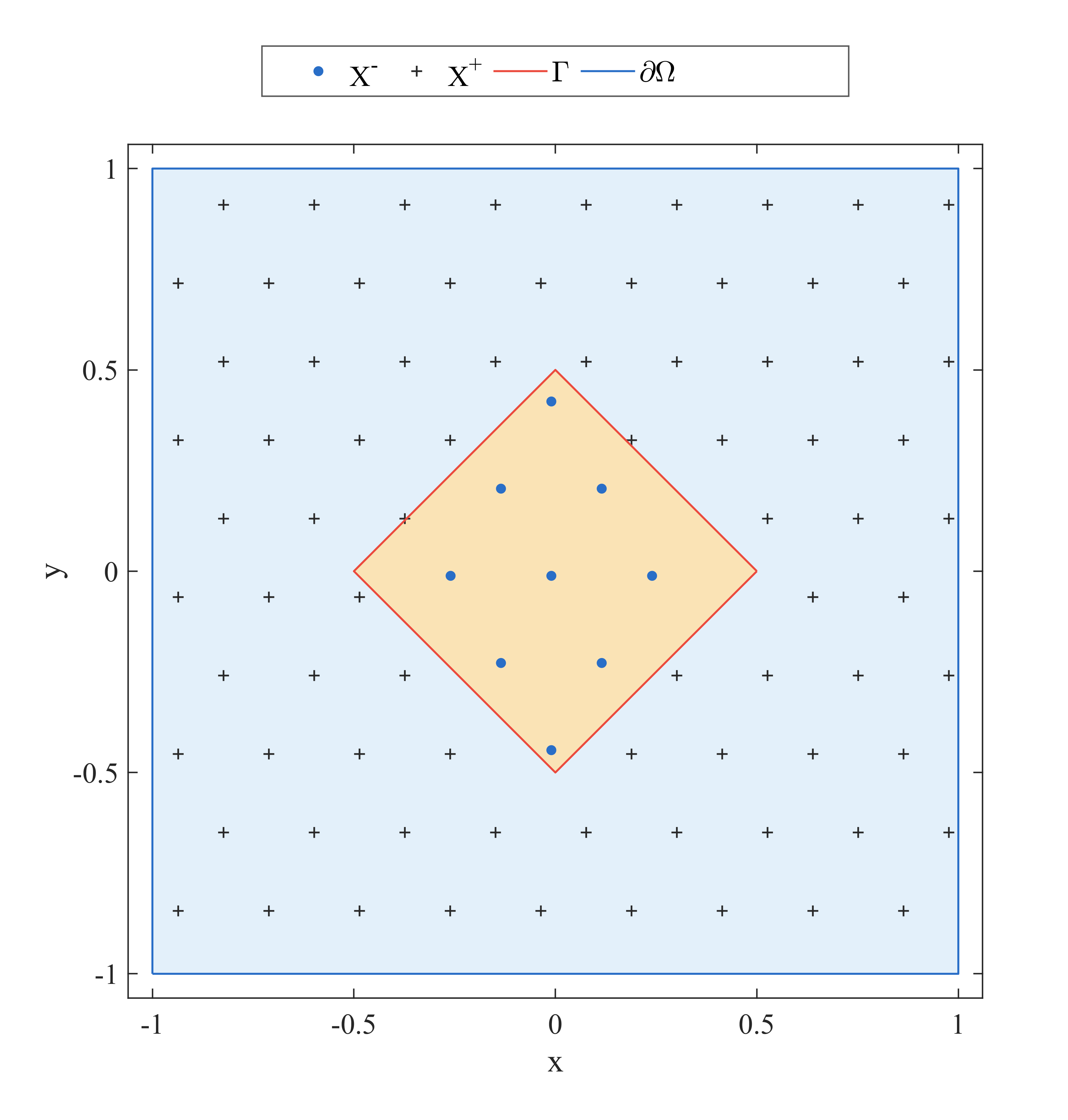}
    \caption{Trial centers for Problem~B.}
\end{subfigure}
\hfill
\begin{subfigure}[t]{0.33\textwidth}
    \centering
    \includegraphics[width=\textwidth]
    {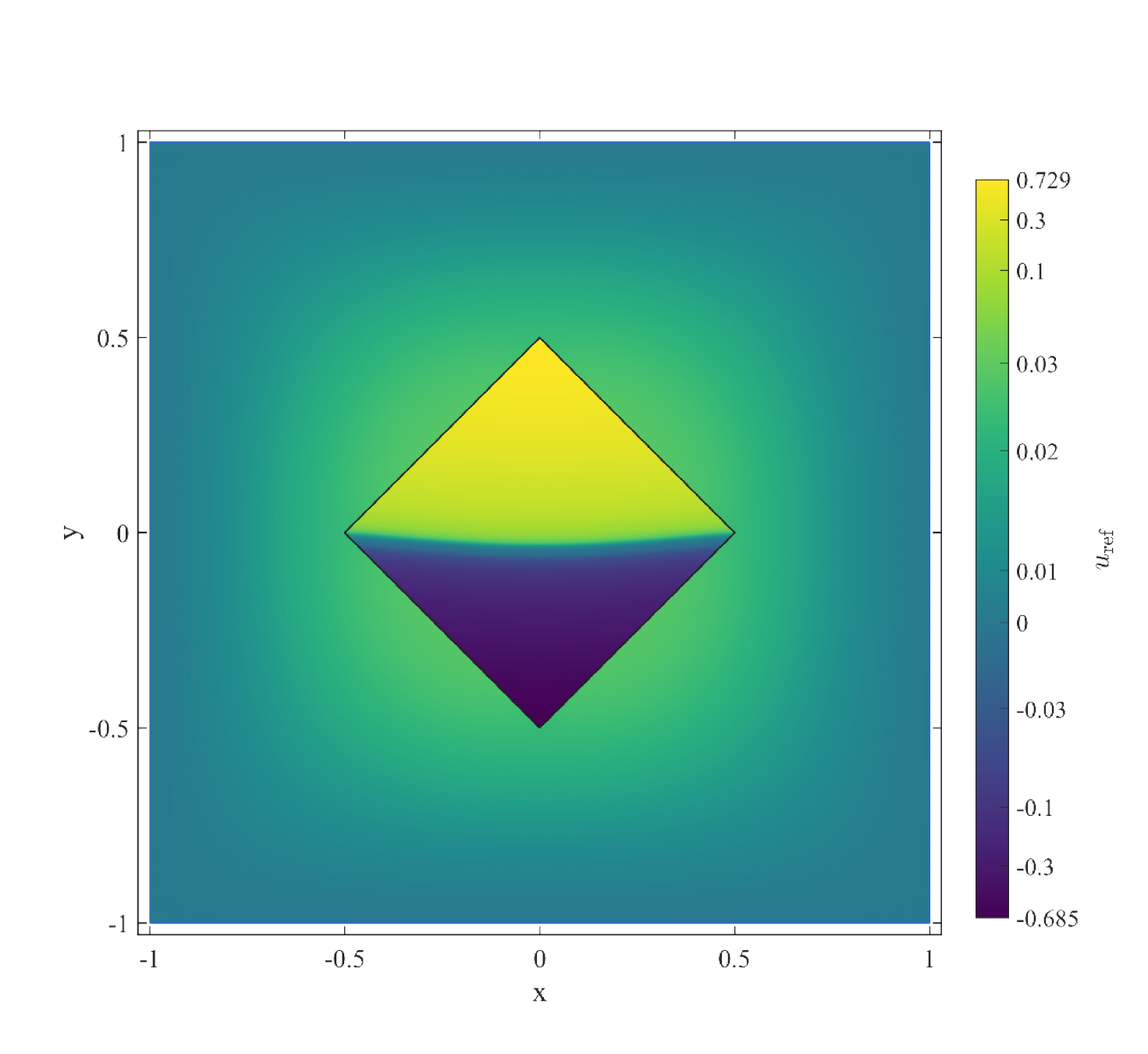}
    \caption{Reference solution for Problem~B.}
\end{subfigure}
\vspace{-4mm}
\caption{Representative trial center distributions and the fitted
$P^2$ Nitsche finite element reference solution.}
\label{fig:test-geometries}
\vspace{-4mm}
\end{figure}

\subsection{Convergence for a smooth manufactured solution}
\label{subsec:example1}
We test the fixed-scale convergence estimates for Problem~A
using Mat\'ern and Wendland kernels with matching Sobolev orders
$m=2.5,3.5$.
The scale parameters are $\varepsilon=1,3,5$ for the Mat\'ern
kernels and $\varepsilon=1/4,1/2,1$ for the Wendland kernels.
The center sets contain
$N=77,156,318,631,1259,2517$ points, with measured fill
distances decreasing from $0.34835$ to $0.06190$.
All kernel settings use the same centers at each level,
and each scale is fixed under refinement.
Assembly uses 10-point Gaussian rules in each coordinate
on $(12,16,129)$ inner-radial, outer-radial, and angular panels.
Error evaluation uses 12-point rules and twice as many panels.
The computations use double precision.

Fig.~\ref{fig:ex1-L2-energy} compares the $L^2$ and Nitsche
energy errors with the reference rates $h^m$ and $h^{m-1}$
from \eqref{eq:L2-error} and \eqref{eq:energy-error}.
The matching Sobolev orders give the same reference rates
for both kernel families.
For $m=2.5$, both errors decrease throughout the tested
refinement range at every scale.
For $m=3.5$, the Mat\'ern kernels with $\varepsilon=3,5$
and the Wendland kernel with $\varepsilon=1$ retain decreasing
errors throughout this range.
In contrast, the Mat\'ern kernel with $\varepsilon=1$
and the Wendland kernel with $\varepsilon=1/4$
exhibit nonmonotone behavior after an initial reduction,
while the Wendland errors with $\varepsilon=1/2$
increase at the finest level.
The regularly decreasing curves exhibit decay consistent
with the reference orders over resolved portions of the
refinement range.
Increasing the Sobolev order therefore does not improve
accuracy uniformly over the tested scales and refinement
levels in double precision.
\begin{figure}[t]
\centering
    \includegraphics[width=0.9\textwidth]
    {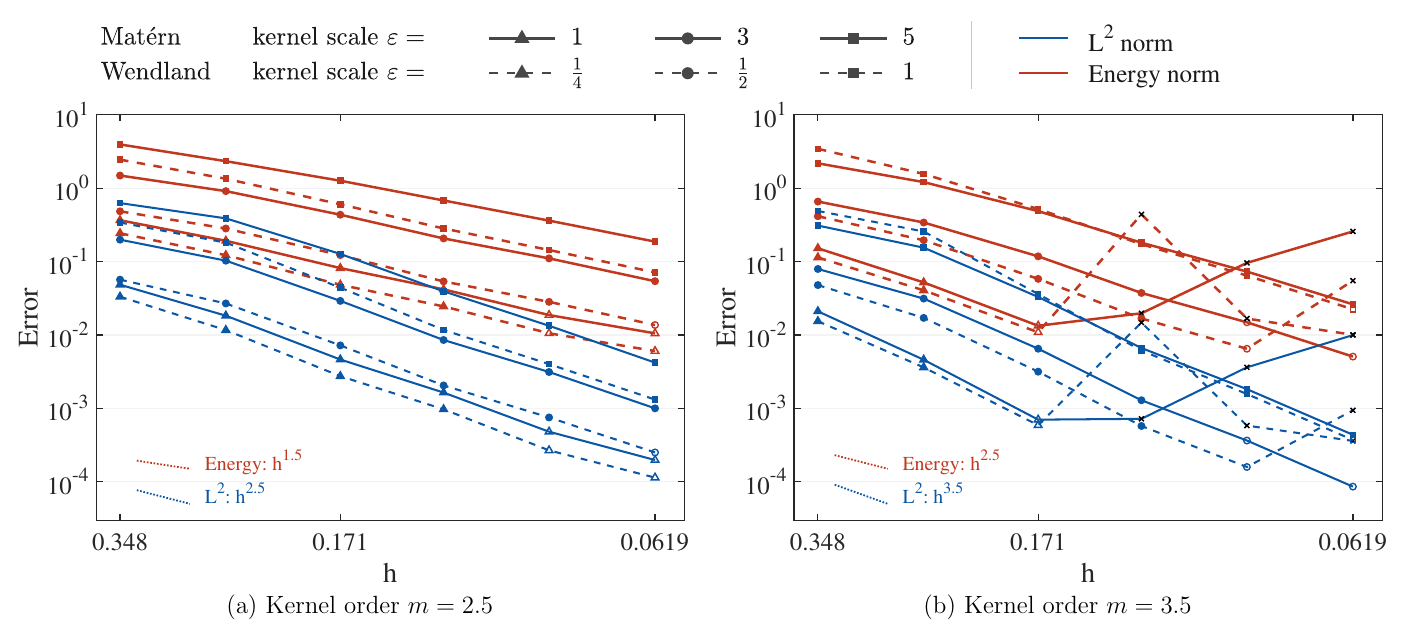}
\vspace{-4mm}
\caption{
(Ex1)
Fixed-scale convergence for the smooth solution of Problem A using Mat\'ern and Wendland kernels with kernel scale $\varepsilon$. Errors are plotted against the fill distance $h = \max\{h_-, h_+\}$. Dotted reference lines show the theoretical convergence rates proved in the preliminaries: $\mathcal{O}(h^{m-1})$ for the energy norm and $\mathcal{O}(h^m)$ for the $L^2$ norm.}
\label{fig:ex1-L2-energy}
\vspace{-4mm}
\end{figure}

To examine the spatial distribution of the error,
Fig.~\ref{fig:ex1-error-distribution} shows the
absolute errors for $m=3$ and $\varepsilon=5$ on the three
finest center sets.
Larger errors occur near portions of the interface and
exterior boundary, while the subdomain interiors generally
have smaller errors.
\begin{figure}[t]
\centering
\begin{subfigure}[t]{0.32\textwidth}
    \centering
    \includegraphics[width=\textwidth]
    {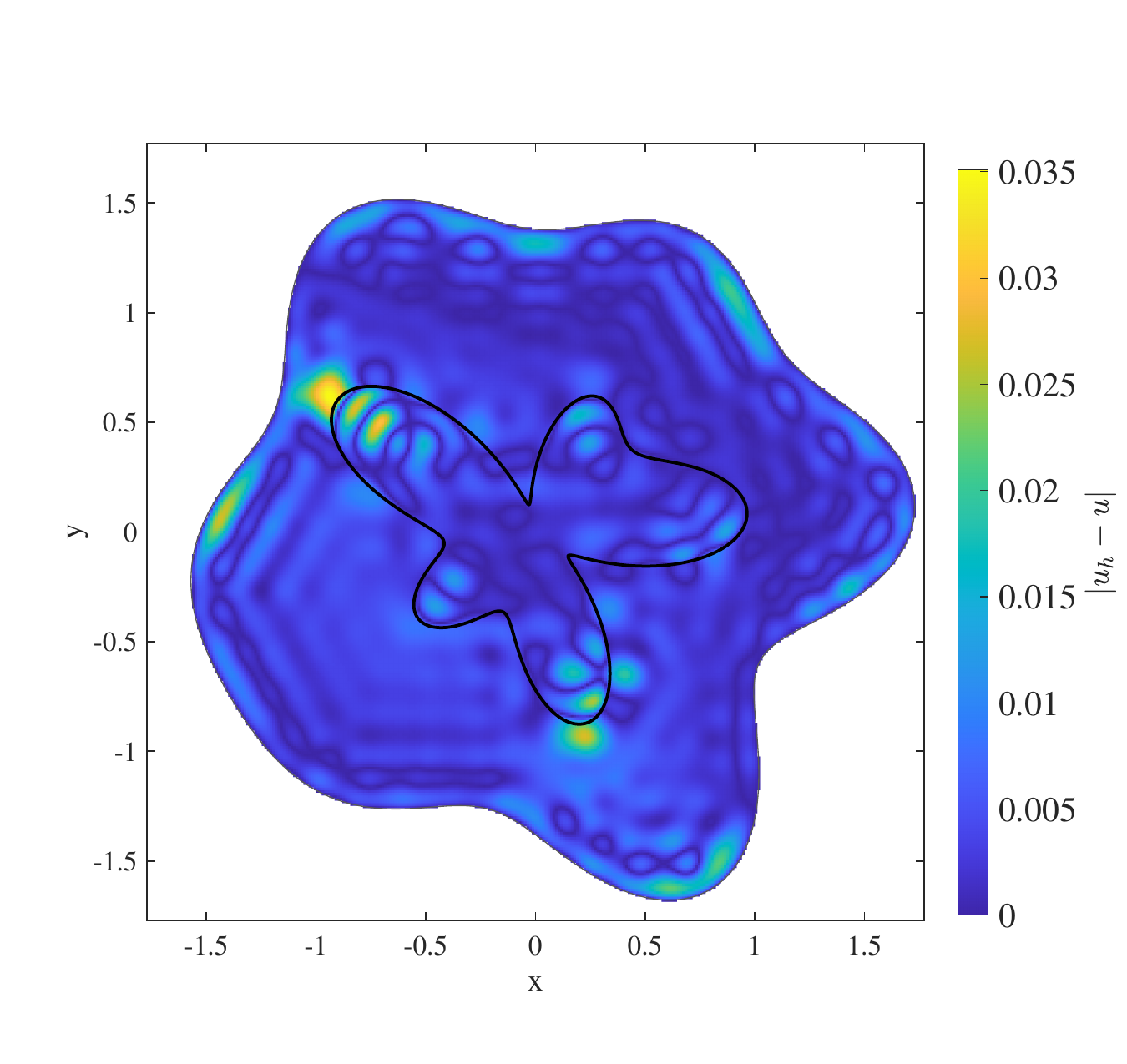}
    \caption{$N=631$.}
    \label{fig:ex1-error-distribution-N631}
\end{subfigure}
\hfill
\begin{subfigure}[t]{0.32\textwidth}
    \centering
    \includegraphics[width=\textwidth]
    {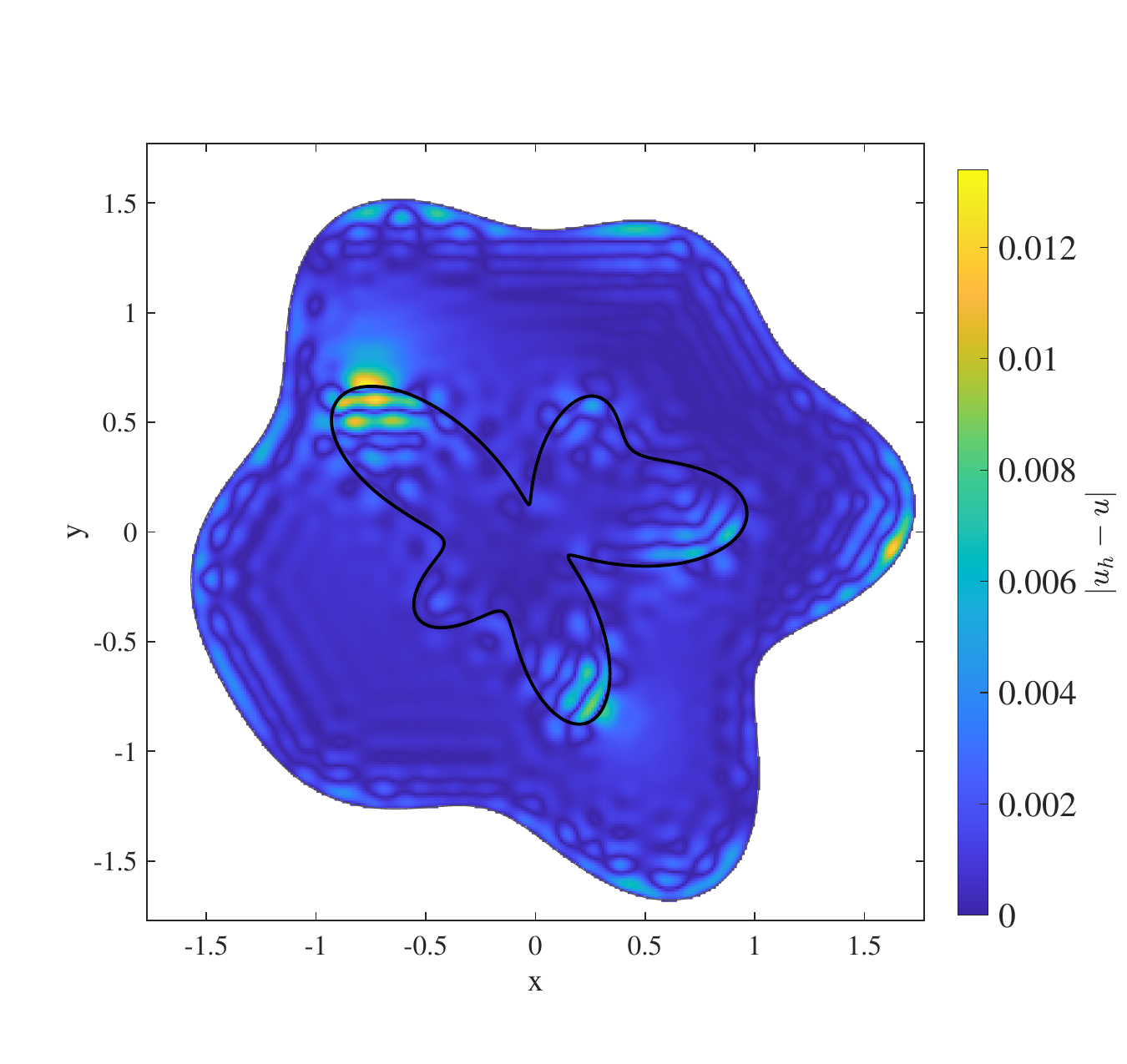}
    \caption{$N=1259$.}
    \label{fig:ex1-error-distribution-N1259}
\end{subfigure}
\hfill
\begin{subfigure}[t]{0.32\textwidth}
    \centering
    \includegraphics[width=\textwidth]
    {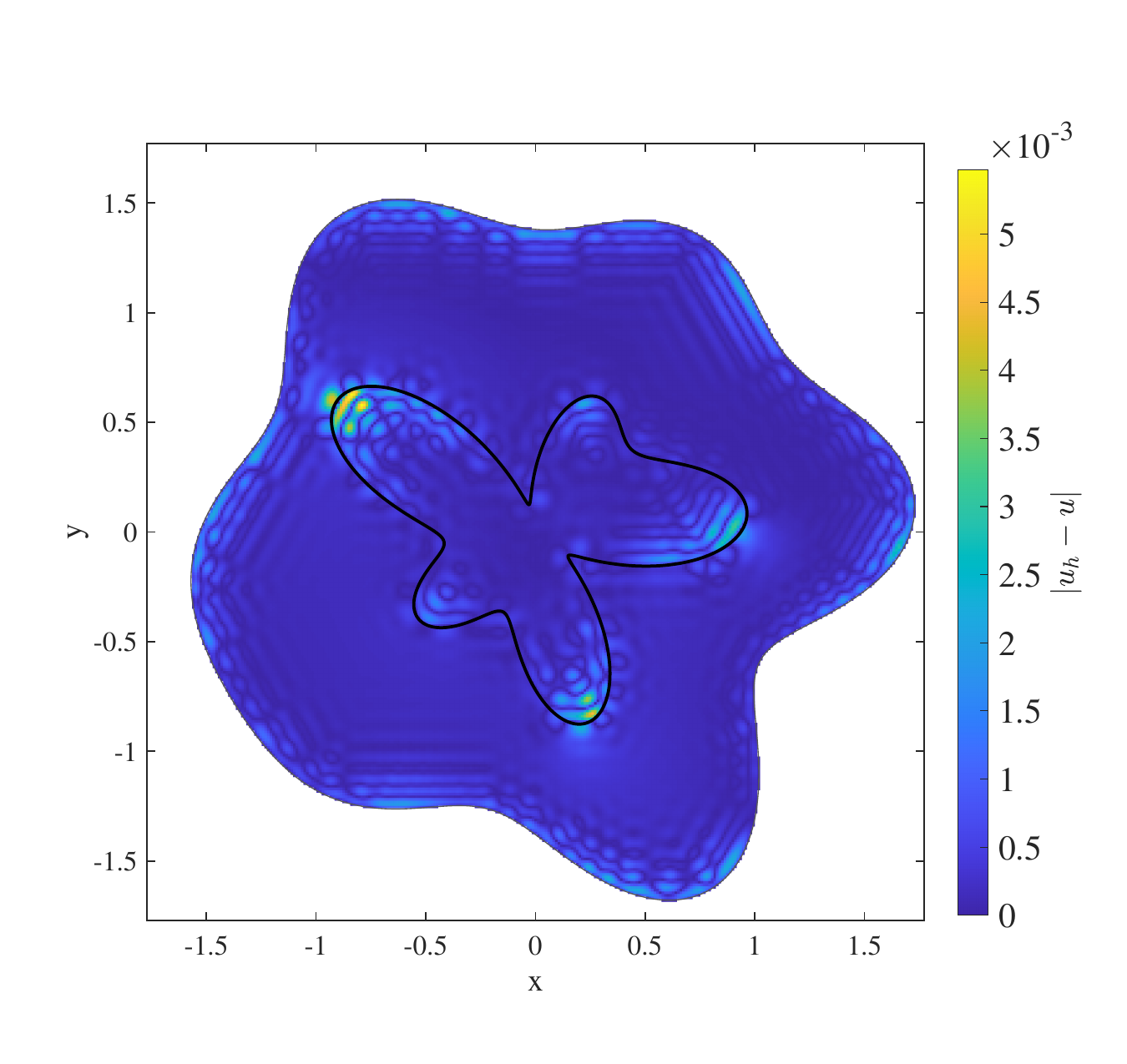}
    \caption{$N=2517$.}
    \label{fig:ex1-error-distribution-N2517}
\end{subfigure}
\vspace{-4mm}
\caption{
(Ex1) Error profiles
for Problem~A with Mat\'ern $m=3$ and $\varepsilon=5$.
Each panel uses a separate color scale to resolve the
spatial error distribution.
The black curve marks the interface $\Gamma$.}
\label{fig:ex1-error-distribution}
\vspace{-4mm}
\end{figure}

\subsection{Convergence for a numerical reference solution of limited regularity}
\label{subsec:example2}
We refine the kernel centers for Problem~B using Mat\'ern
kernels with order $m=2,3,4$ and kernel scale $\varepsilon=5$, and Wendland
kernels with $m=2.5,~3.5$ and $\varepsilon=\frac 12$.
The five center sets have measured fill distances decreasing
from $0.404$ to $0.0964$.
Kernel assembly uses a separate interface-fitted integration
mesh based on $32$ subdivisions per coordinate direction,
with 10-point Gaussian rules in each Duffy coordinate.
Errors relative to the saved finite element reference solution
are evaluated on the reference mesh using 12-point rules.

Table~\ref{tab:ex2-errors} reports the $L^2$ and Nitsche energy
errors against the measured fill distance $h$.
Within each family, higher Sobolev orders give smaller errors
on the finer center sets.
For Problem~B, \eqref{eq:energy-error} and
\eqref{eq:ex2-regularity} give energy-error order $1$
for Mat\'ern $m=2$ and $\lambda_\star-\tau$ for the other
tested kernels, with sufficiently small $\tau>0$.
The fitted energy-error exponent for Mat\'ern $m=2$ is close
to $1$, whereas the remaining exponents exceed these
regularity-based orders.
This faster decay may reflect a preasymptotic regime
in which the corner singularities do not yet dominate
the total error.
\begin{table}[t]
\centering
\caption{
(Ex2) Problem B errors relative to a reference finite element solution. Kernels $M_m$ and $W_m$ denote Mat\'{e}rn and Wendland kernels of Sobolev order $m$ with scale parameters $\varepsilon = 5$ and $\varepsilon = 1/2$, respectively. Measured convergence rates ($\mathit{Rate}$) are compared against theoretical benchmark rates ($\mathit{Rate}^*$).
}
\label{tab:ex2-errors}

\begingroup
\footnotesize
\setlength{\tabcolsep}{2.1pt}
\setlength{\arrayrulewidth}{0.6pt}
\renewcommand{\arraystretch}{1.18}

\begin{tabular}{@{}r rrrrr|rrrrr@{}}
\toprule
\multicolumn{1}{c}{}
& \multicolumn{5}{c|}{Nitsche energy error}
& \multicolumn{5}{c}{$L^2$ error}
\\
\cmidrule(lr){2-6}\cmidrule(l){7-11}
$h$
& $\mathrm M_2$ & $\mathrm M_3$ & $\mathrm M_4$
& $\mathrm W_{5/2}$ & $\mathrm W_{7/2}$
& $\mathrm M_2$ & $\mathrm M_3$ & $\mathrm M_4$
& $\mathrm W_{5/2}$ & $\mathrm W_{7/2}$
\\
\midrule

 0.40357
& $1.97\mathrm{e}{+0}$
& $1.32\mathrm{e}{+0}$
& $9.69\mathrm{e}{-1}$
& $4.42\mathrm{e}{-1}$
& $4.30\mathrm{e}{-1}$
& $2.65\mathrm{e}{-1}$
& $1.67\mathrm{e}{-1}$
& $1.15\mathrm{e}{-1}$
& $5.67\mathrm{e}{-2}$
& $5.41\mathrm{e}{-2}$
\\

0.26025
& $8.75\mathrm{e}{-1}$
& $5.91\mathrm{e}{-1}$
& $4.43\mathrm{e}{-1}$
& $1.81\mathrm{e}{-1}$
& $2.07\mathrm{e}{-1}$
& $7.56\mathrm{e}{-2}$
& $5.27\mathrm{e}{-2}$
& $4.01\mathrm{e}{-2}$
& $1.65\mathrm{e}{-2}$
& $1.97\mathrm{e}{-2}$
\\

 0.18068
& $8.67\mathrm{e}{-1}$
& $4.12\mathrm{e}{-1}$
& $2.17\mathrm{e}{-1}$
& $1.25\mathrm{e}{-1}$
& $7.01\mathrm{e}{-2}$
& $5.80\mathrm{e}{-2}$
& $2.31\mathrm{e}{-2}$
& $1.29\mathrm{e}{-2}$
& $8.07\mathrm{e}{-3}$
& $4.87\mathrm{e}{-3}$
\\

 0.13549
& $5.48\mathrm{e}{-1}$
& $2.07\mathrm{e}{-1}$
& $8.28\mathrm{e}{-2}$
& $6.09\mathrm{e}{-2}$
& $2.57\mathrm{e}{-2}$
& $2.63\mathrm{e}{-2}$
& $7.94\mathrm{e}{-3}$
& $3.17\mathrm{e}{-3}$
& $2.68\mathrm{e}{-3}$
& $1.09\mathrm{e}{-3}$
\\

 0.09637
& $4.46\mathrm{e}{-1}$
& $1.44\mathrm{e}{-1}$
& $4.08\mathrm{e}{-2}$
& $4.50\mathrm{e}{-2}$
& $8.77\mathrm{e}{-3}$
& $1.46\mathrm{e}{-2}$
& $4.60\mathrm{e}{-3}$
& $1.27\mathrm{e}{-3}$
& $1.64\mathrm{e}{-3}$
& $2.61\mathrm{e}{-4}$
\\

\midrule
\multicolumn{1}{l}{Rate}
& 1.04 & 1.65 & 2.64 & 1.61 & 3.30
& 2.18 & 2.54 & 3.65 & 2.50 & 4.64
\\
\multicolumn{1}{l}{Rate*}
& $1$ & $2$ & $3$ & $1.5$ & $2.5$
& $2$ & $3$ & $4$ & $2.5$ & $3.5$
\\
\bottomrule
\end{tabular}
\endgroup
\end{table}

\subsection{Accuracy, sparsity, and conditioning under stationary scaling}
\label{subsec:example3}
The preceding experiments use fixed kernel scales.
We now adopt stationary scaling, which keeps the support radius
proportional to the fill distance
\cite[Sections~7.4--7.5]{SchabackWendland2006}.
For the Wendland kernel of Sobolev order $m=3.5$, we set
$\varepsilon=(C_Wh)^{-1}$ with $C_W\in\{5,10,15,20\}$.
Thus $\varepsilon h=C_W^{-1}$ remains fixed under refinement, and $C_W$
controls the support overlap.
In two dimensions, substitution into \eqref{eq:condition-number}
gives
$\kappa_2(K_{h,(C_Wh)^{-1}})
\le C C_W^{10}h^{-3}$ for $C_Wh\ge1$.
This restriction is inherited from $\varepsilon\le1$.
All reported levels with $C_W\ge10$ satisfy it.
For $C_W=5$, the two finest levels of both problems also test
scales outside this range.
The power $h^{-3}$ provides a reference for the conditioning
comparison within the stated range.

We use the first four center sets from Examples~1 and~2,
with the same penalty parameters and quadrature constructions.
At each level, all support choices use identical centers and
common integration rules refined to resolve the smallest support.
The $L^2$ errors use the exact solution for Problem~A and the saved
finite element reference solution for Problem~B.
Fig.~\ref{fig:ex3-scaling} compares these errors with the matrix
nonzero counts and condition estimates in the kernel-translate basis.
Over the reported refinement range, increasing $C_W$ reduces
the $L^2$ errors and improves their observed convergence rates.
The fitted error exponents remain below the fixed-scale benchmark
$m=3.5$ available for sufficiently smooth solutions.
The condition estimates have fitted growth exponents below three.
Thus, stationary scaling with a suitable $C_W$ combines
observable error convergence with moderate growth of coefficient
conditioning over these levels.
Larger support overlap nevertheless increases the absolute
condition estimates and matrix density. The condition estimates in the $L^2$-orthonormal basis
are substantially smaller and show little variation
with $C_W$ over the reported refinement levels.
The $h^{-2}$ references show the growth power in
Corollary~\ref{cor:mass-preconditioning}.
\begin{figure}[t]
\centering

\begin{minipage}[c]{0.035\textwidth}
    \centering
    \rotatebox[origin=c]{90}{\small Problem~A}
\end{minipage}
\hfill
\begin{subfigure}[c]{0.31\textwidth}
    \centering
    \includegraphics[width=\textwidth]{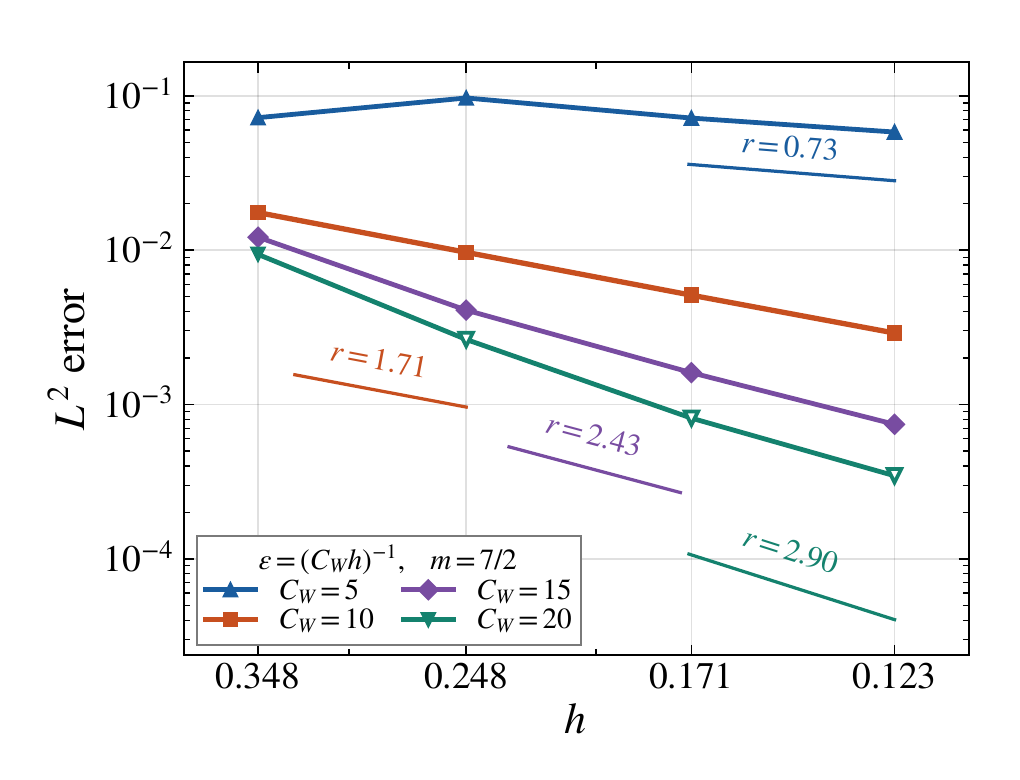}
\end{subfigure}
\hfill
\begin{subfigure}[c]{0.31\textwidth}
    \centering
    \includegraphics[width=\textwidth]{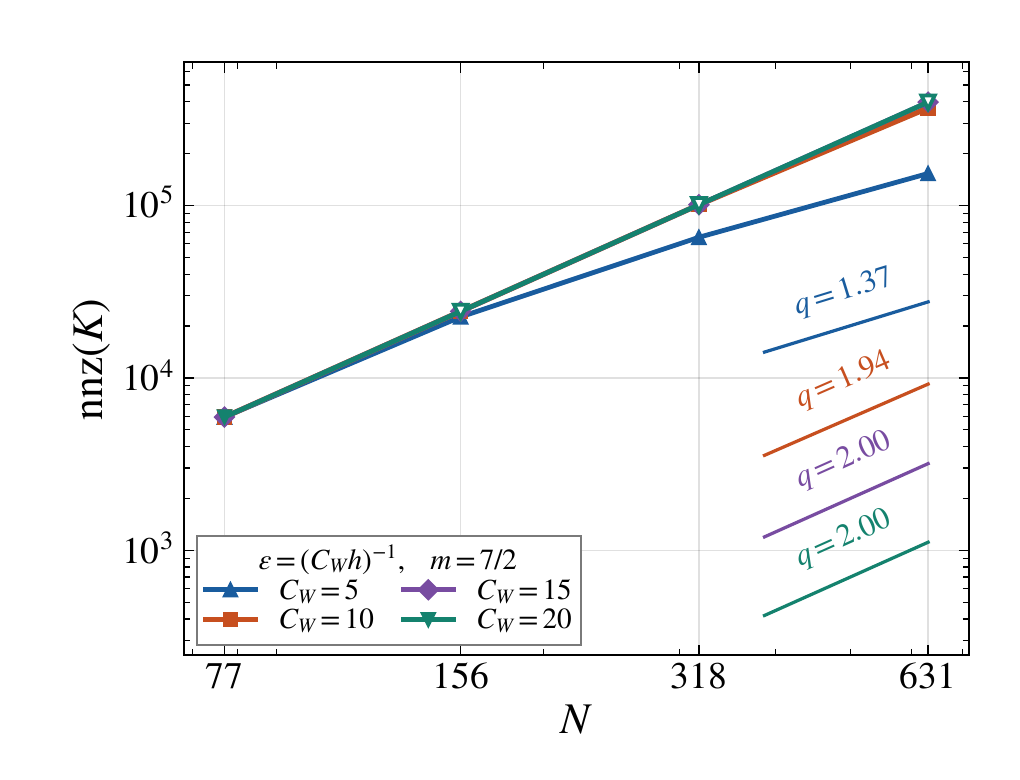}
\end{subfigure}
\hfill
\begin{subfigure}[c]{0.31\textwidth}
    \centering
    \includegraphics[width=\textwidth]{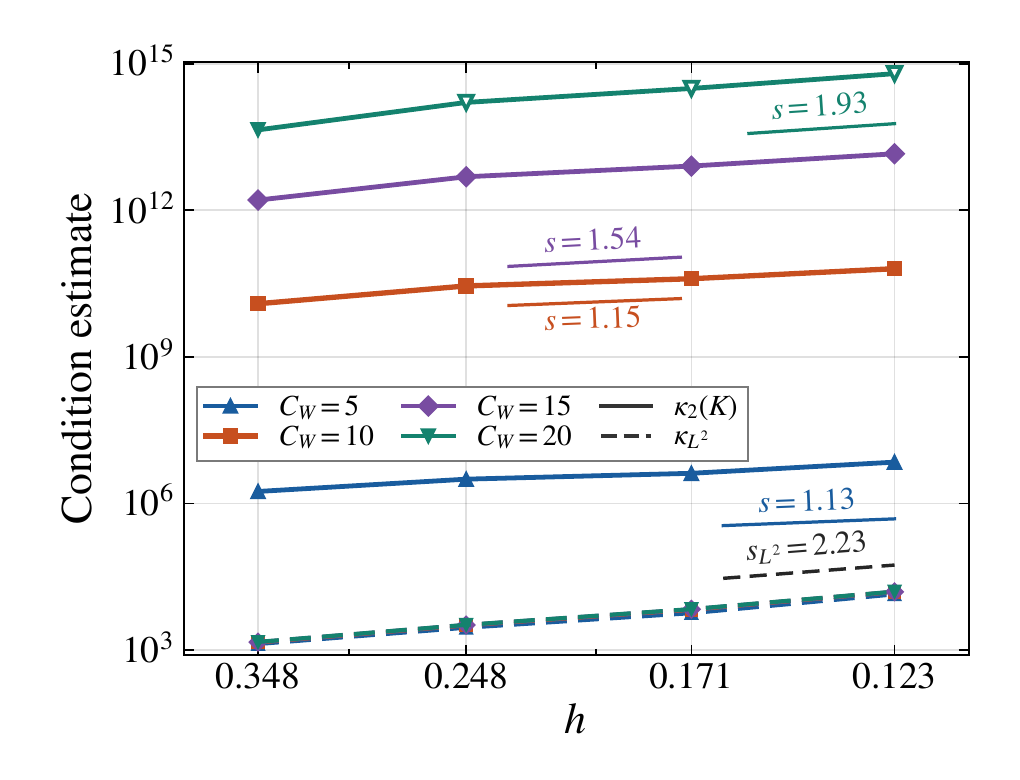}
\end{subfigure}

\par\vspace{-1mm}

\begin{minipage}[c]{0.035\textwidth}
    \centering
    \rotatebox[origin=c]{90}{\small Problem~B}
\end{minipage}
\hfill
\begin{subfigure}[c]{0.31\textwidth}
    \centering
    \includegraphics[width=\textwidth]{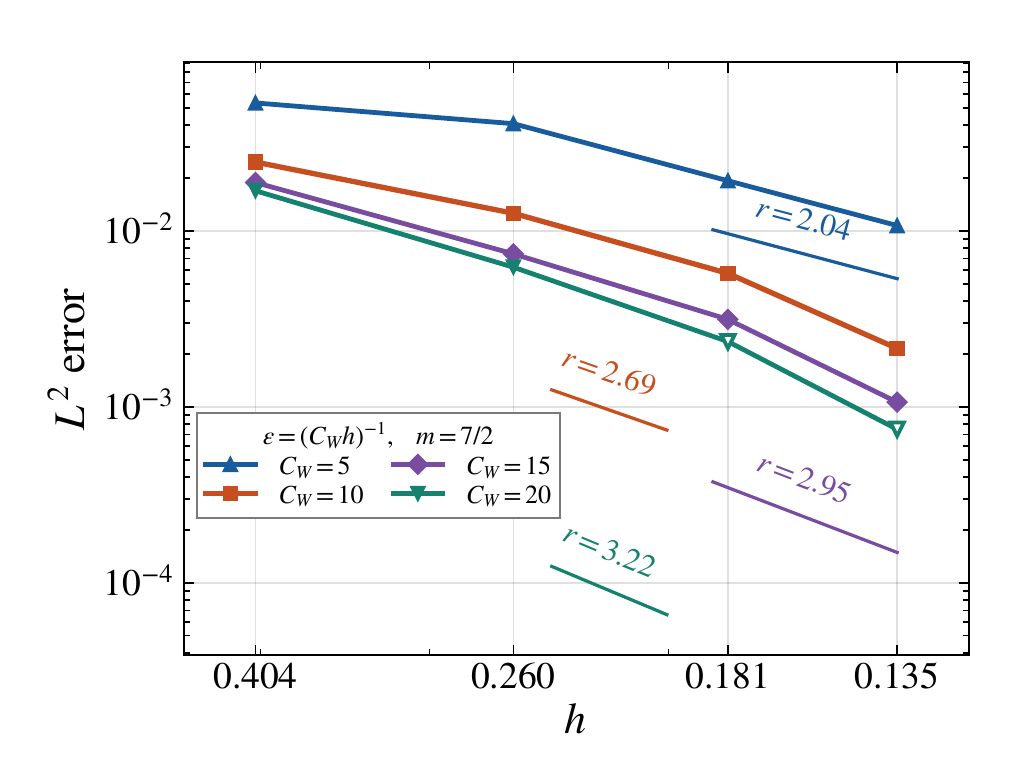}
\end{subfigure}
\hfill
\begin{subfigure}[c]{0.31\textwidth}
    \centering
    \includegraphics[width=\textwidth]{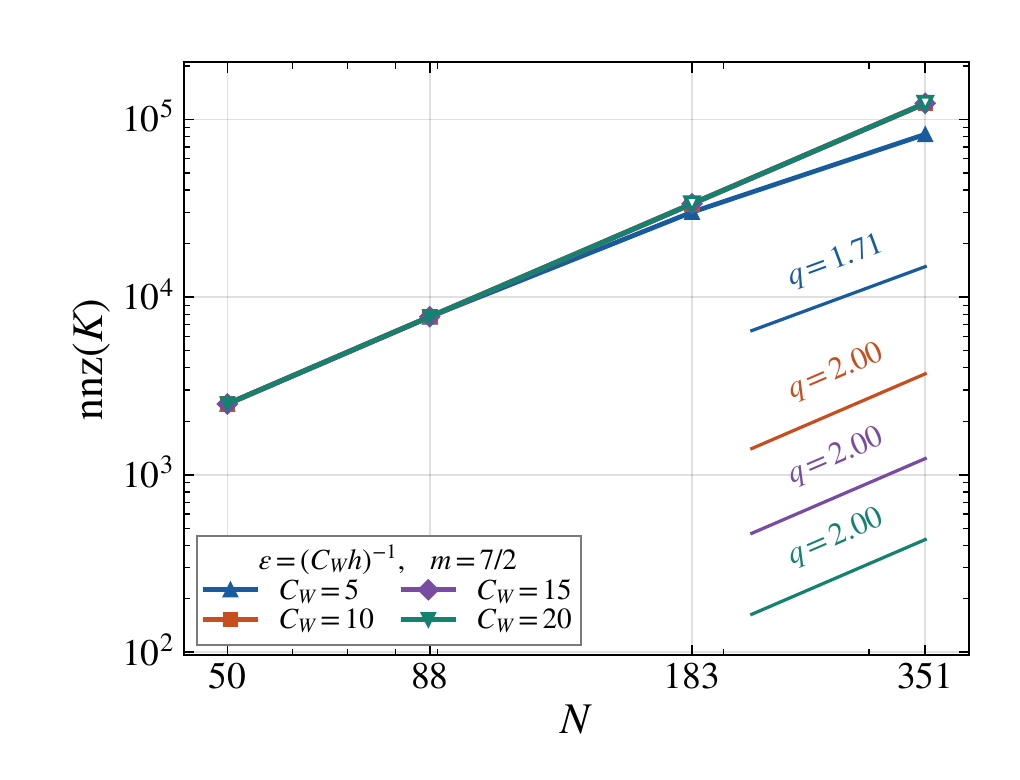}
\end{subfigure}
\hfill
\begin{subfigure}[c]{0.31\textwidth}
    \centering
    \includegraphics[width=\textwidth]{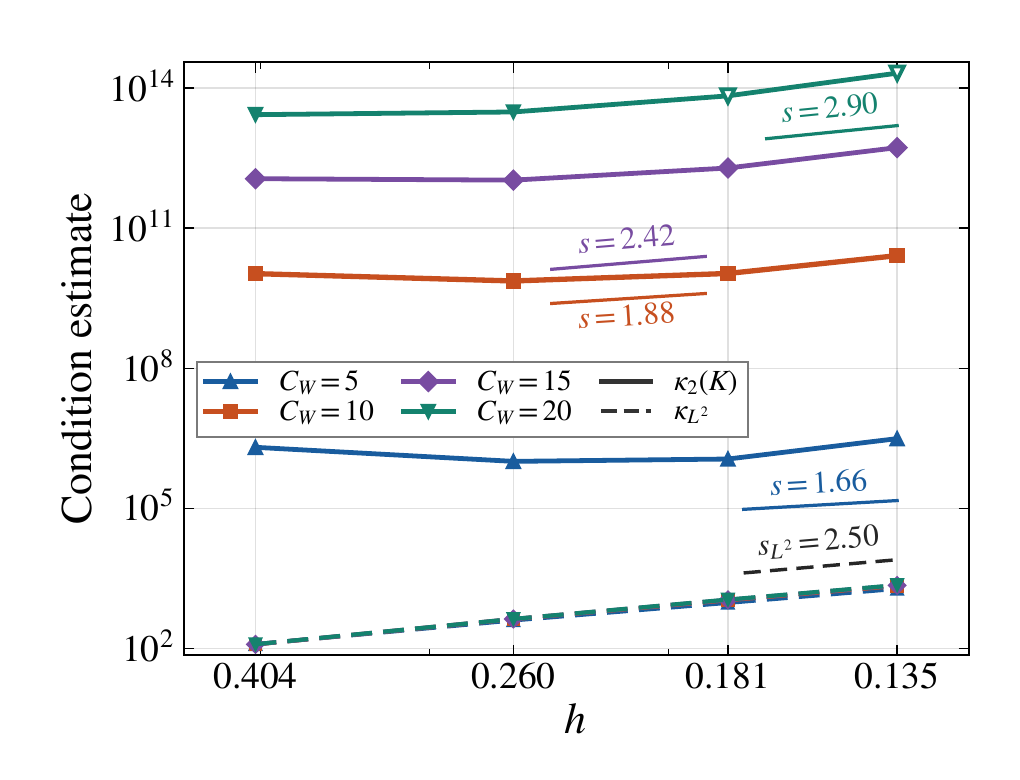}
\end{subfigure}

\par\vspace{-1mm}

\hspace*{0.035\textwidth}%
\hfill
\begin{minipage}[t]{0.31\textwidth}
    \centering
    \small \hspace{8mm}  $L^2$ error
\end{minipage}
\hfill
\begin{minipage}[t]{0.31\textwidth}
    \centering
    \small \hspace{8mm} Matrix nonzero count
\end{minipage}
\hfill
\begin{minipage}[t]{0.31\textwidth}
    \centering
    \small \hspace{8mm} Condition estimate
\end{minipage}
\vspace{-1mm}
\caption{
(Ex3) Accuracy, sparsity, and conditioning under stationary scaling
$\varepsilon=(C_Wh)^{-1}$ for Wendland kernel of Sobolev order $m=3.5$.
The scale is $\varepsilon=(C_Wh)^{-1}$ with
$C_W\in\{5,10,15,20\}$. 
In the right panels, $\kappa_2(K)$ denotes the condition
number in the kernel-translate basis, shown by solid curves.
The dashed curves show
$\kappa_{L^2}:=\kappa_2(M^{-1/2}KM^{-1/2})$,
where $K=K_{h,\varepsilon}$ and $M=M_{h,\varepsilon}$.
Here fill distance is $h=\max\{h_-,h_+\}$ and trial centers have $N=|X^-|+|X^+|$.}
\vspace{-4mm}
\label{fig:ex3-scaling}
\end{figure}

\subsection{Heat conduction in a three-dimensional curved composite}
\label{subsec:example4}
We consider the two-material thermal-shielding tile
of \cite{WassermannEtAl2020}. The geometry is given by
$\mathbf F(\xi,\theta,s)=\mathbf S(\xi,\theta)+s\mathbf e_3$,
where $\mathbf S$ is the quadratic B-spline base surface defined
there, $(\xi,\theta)\in(0,1)^2$, and $0<s<5\,\mathrm{cm}$.
Titanium occupies $0<s<1.25\,\mathrm{cm}$ and porous silica
occupies $1.25<s<5\,\mathrm{cm}$.
Their conductivities are
$\alpha_{\rm Ti}=0.216\,\mathrm{W/(cm\,K)}$ and
$\alpha_{\rm SiO_2}=2.3\times10^{-3}\,\mathrm{W/(cm\,K)}$.
We solve $-\nabla\cdot(\alpha\nabla T)=0$ with
$T=20^\circ\mathrm C$ on $s=0$ and
$T=1000^\circ\mathrm C$ on $s=5\,\mathrm{cm}$.
The remaining exterior surface is insulated.
Perfect thermal contact imposes
$\jump{T}=0$ and
$\jump{\alpha\nabla T\cdot\mathbf n_\Gamma}=0$
at $s=1.25\,\mathrm{cm}$.

We use a fixed-scale Mat\'ern kernel with $m=3$ and
$\varepsilon=0.3\sqrt{2}\,\mathrm{cm}^{-1}$.
The Nitsche penalty parameters are also selected as
$\gamma_\Gamma=\gamma_{\partial,\pm}=100$.
Independent shifted Cartesian lattices are clipped to the
material subdomains.
Integration uses the B-spline parametrization.
We present the calculation at $h=0.6\,\mathrm{cm}$ with
$N=11320$ centers, comprising $2871$ in titanium and $8449$ in silica.

Fig.~\ref{fig:example4-3d} shows the trial centers,
temperature field, and numerical interface jump.
Fig.~\ref{fig:example4-sections} shows the section
$x=5\,\mathrm{cm}$ and the profile $T_h(5,25,z)$ at the same
resolution.
Most of the temperature rise occurs in the poorly conducting
silica layer.
The profile shows a sharp change in slope at the interface.
The numerical temperature jump is small relative to the
imposed temperature difference, consistent with the weak
enforcement of thermal contact. The through-thickness profile is qualitatively consistent with
the discontinuous-material case reported in
\cite[Fig.~16(a)]{WassermannEtAl2020}.
\begin{figure}[t]
\centering
\begin{subfigure}[t]{0.32\textwidth}
    \centering
    \includegraphics[width=\textwidth]
    {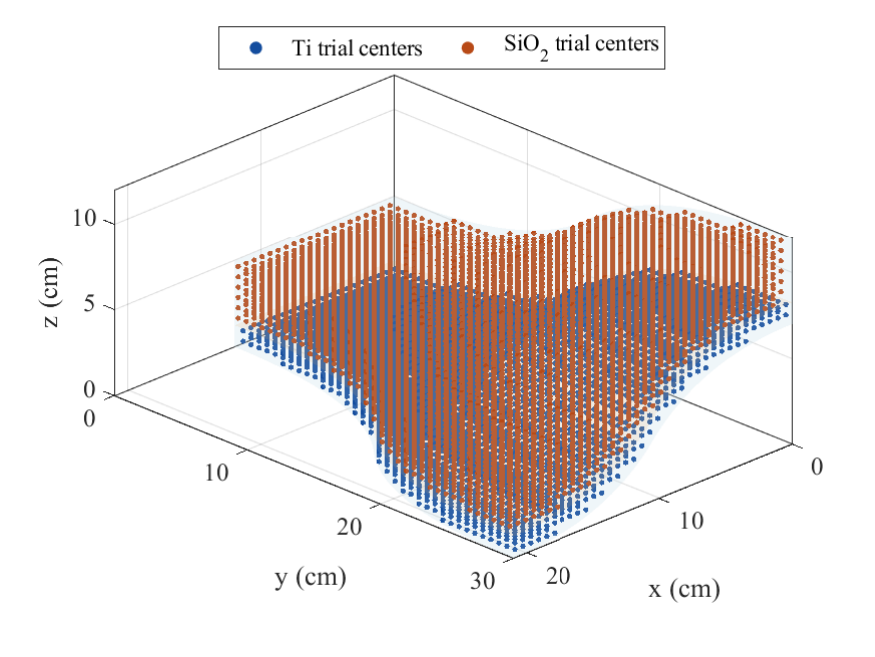}
    \caption{Independent subdomain trial centers.}
\end{subfigure}
\hfill
\begin{subfigure}[t]{0.32\textwidth}
    \centering
    \includegraphics[width=\textwidth]
    {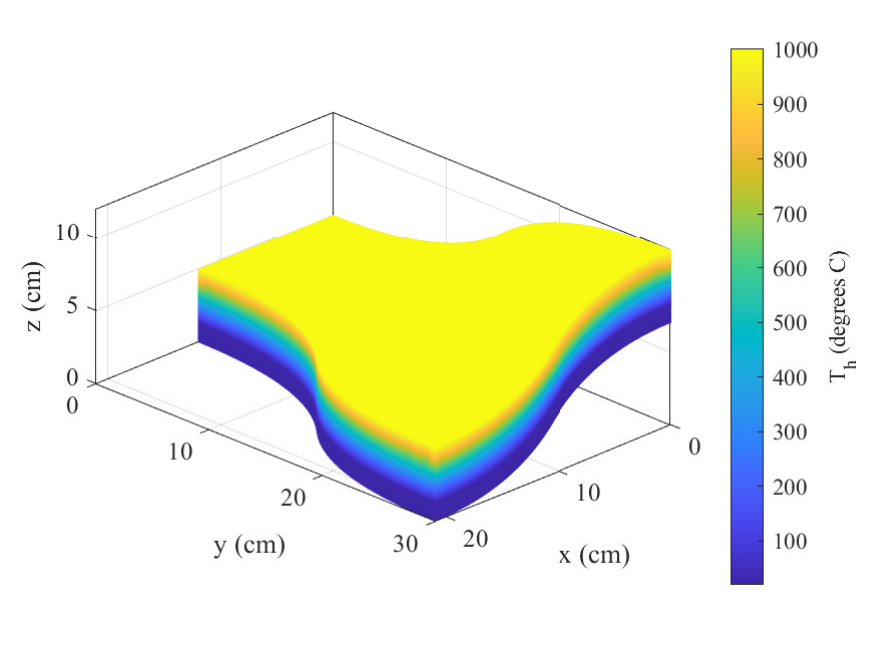}
    \caption{Numerical temperature field $T_h$.}
\end{subfigure}
\hfill
\begin{subfigure}[t]{0.32\textwidth}
    \centering
    \includegraphics[width=\textwidth]    {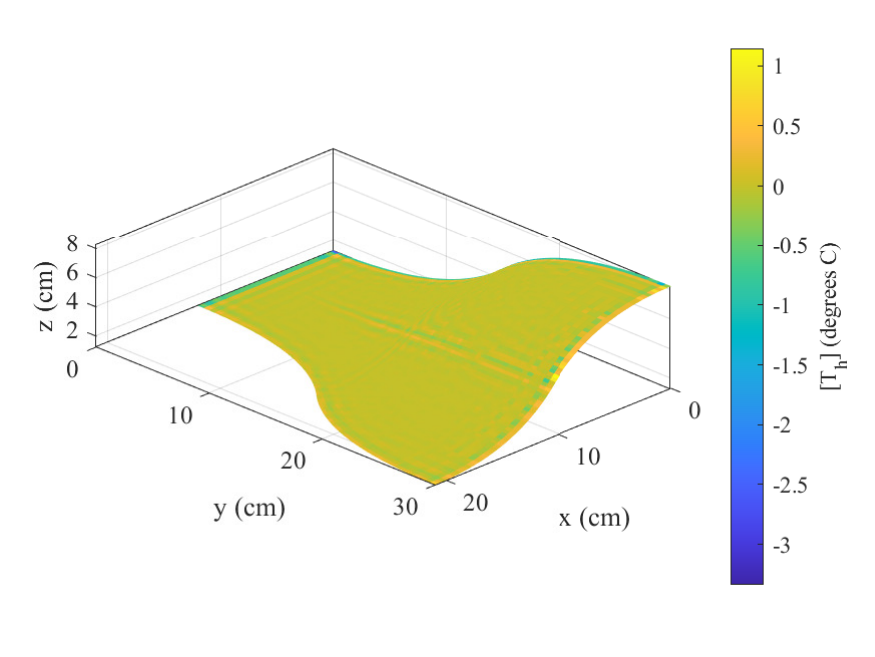}
    \caption{Numerical interface jump $\jump{T_h}$.}
\end{subfigure}
\vspace{-4mm}
\caption{
(Ex4)
Three-dimensional titanium--silica thermal-shielding tile
computed with a Mat\'ern kernel of Sobolev order $m=3$ and
$\varepsilon=0.3\sqrt2\,\mathrm{cm}^{-1}$.
The discretization uses $N=11320$ trial centers at
$h=0.6\,\mathrm{cm}$.}
\label{fig:example4-3d}
\vspace{-4mm}
\end{figure}

\begin{figure}[t]
\centering
\begin{subfigure}[t]{0.4\textwidth}
    \centering
    \includegraphics[width=\textwidth]
    {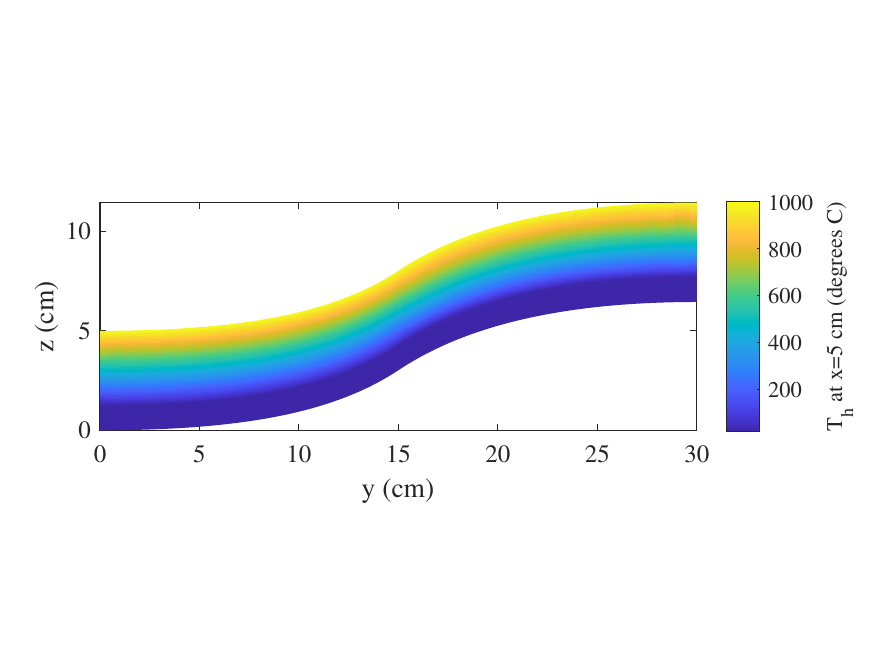}
    \caption{Temperature in the section $x=5~{\rm cm}$.}
\end{subfigure}
\hspace{6mm}
\begin{subfigure}[t]{0.36\textwidth}
    \centering
    \includegraphics[width=\textwidth]
    {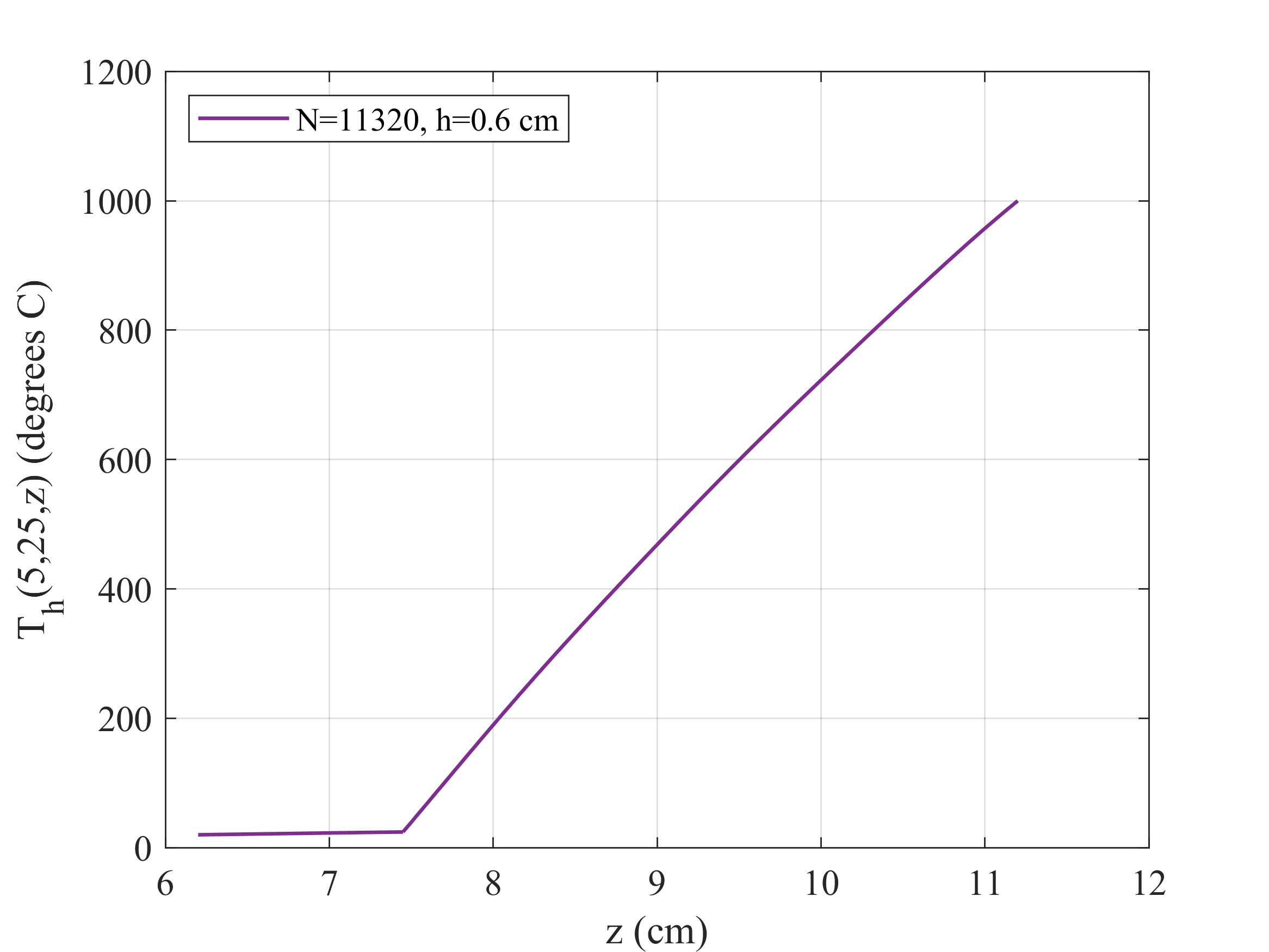}
    \caption{Through-thickness profile $T_h(5,25,z)$.}
\end{subfigure}
\vspace{-4mm}
\caption{
(Ex4)
Sectional and through-thickness temperature distributions
in the titanium--silica tile, computed with $N=11320$ centers
at $h=0.6\,\mathrm{cm}$ using the Mat\'ern kernel with
$m=3$ and $\varepsilon=0.3\sqrt2\,\mathrm{cm}^{-1}$.
}
\label{fig:example4-sections}
\vspace{-4mm}
\end{figure}

\section{Conclusions}
\label{sec:conclusions}

We established a trace inverse estimate for independent Sobolev-kernel
spaces on the two subdomains. The estimate controls the conormal
terms by the Nitsche energy, including configurations with an
interior inclusion, and yields well-posedness with penalty thresholds
independent of refinement. Together with approximation in the
Nitsche norms, it gives energy and $L^2$ error estimates under
the stated primal and adjoint regularity assumptions.
We also obtained a scale-explicit upper bound for the stiffness-matrix
condition number in the kernel-translate basis for
$0<\varepsilon\le1$.
The numerical experiments examine fixed-scale error reduction,
and the accuracy cost of
reducing kernel support. The three-dimensional example illustrates
the mixed-boundary implementation.
Extensions to evolving interfaces and to scale-dependent kernel
refinement, for which uniform stability and approximation estimates would
be required, remain subjects for future work.

\bibliographystyle{unsrt}
\bibliography{references}
\end{document}